\documentclass[preprint,review,10pt]{elsarticle}
\usepackage{bbm}
\usepackage{amsmath,amssymb,amsthm}
\usepackage{bm}
\usepackage{color}
\usepackage{xcolor}
\usepackage{graphicx}
\usepackage{enumerate}
\usepackage{ifpdf}
\usepackage{algorithmic}
\usepackage{float}
\usepackage{CJK}
\usepackage{listings}
\usepackage{bbm}
\usepackage{mathrsfs}
\usepackage{amsmath}
\usepackage{txfonts}
\usepackage{epstopdf}
\usepackage{cases}
\usepackage{subfigure}
\usepackage{enumerate}
\usepackage{lipsum}
\makeatletter
\def\ps@pprintTitle{%
 \let\@oddhead\@empty
 \let\@evenhead\@empty
 \def\@oddfoot{}%
 \let\@evenfoot\@oddfoot}
\makeatother
 \usepackage{graphicx}
\usepackage{amssymb}
\usepackage{amsthm}

\makeatletter\@addtoreset{equation}{section} \makeatother
\newtheorem{theorem}{Theorem}[section]
\newtheorem{lemma}{Lemma}[section]
\newtheorem{proposition}{Proposition}[section]

\newtheorem{remark}{Remark}[section]

\newcommand{\R}{\mathbb{R}}

\begin{document}
\begin{frontmatter}
\title{Structure-preserving quasi-interpolation for \\ vector-valued function with multiple physical constraints}
\tnotetext[label1]{This work is supported by  Youth Project (Class A) of Anhui Provincial Natural Science Foundation (No. 2508085J009) and  Natural Science Foundation of China (No. 12271002).}
\author{Wenwu Gao}
 \ead{wenwugao528@163.com}
  \address{School of Big Data and Statistics,  Anhui University, Hefei, P. R. China}
 \begin{abstract}
We develop a unified theory for constructing vectorial quasi-interpolation that \textbf{exactly} preserves the intrinsic  physical structures of the approximand. These physical structures are characterized by a system of linear differential operators with constant coefficients, which contains the classical divergence-free and curl-free constraints as special cases. The central ingredient of our approach is a family of matrix-valued kernels, built from an orthogonal projector in the frequency domain, whose columns \textbf{analytically} satisfy the prescribed constraints. The resulting quasi-interpolation is a weighted average of translates of this kernel, the weights being the sampled function values; consequently, no constrained optimization problem has to be solved. Simultaneous error estimates for the approximand and its derivatives are derived within a bias-variance framework. To apply the proposed quasi-interpolation to the numerical decomposition of vector fields, we first present a generalized Helmholtz--Hodge decomposition that splits an arbitrary smooth vector field into two components endowed with distinct physical structures. For each component, we further devise structure-preserving quasi-interpolation schemes along with the corresponding error estimates. Explicit matrix-valued kernels are constructed for multiple typical physical  constraints, including steady-state acoustic constraints, combined divergence-curl constraints, coupled divergence-free constraints, and second-order Saint-Venant compatibility conditions, demonstrating  the broad applicability of the developed theory.  Numerical simulations validate the theoretical error bounds and the \textbf{exactness} of structure preservation. As illustrated in the numerical tests, this property enables accurate and efficient identification of sources and sinks within vector fields. Furthermore, a three-dimensional numerical experiment is presented to achieve numerical reconstruction of generalized Helmholtz--Hodge components for general smooth vector fields without prior constraint assumptions.
\end{abstract}

\begin{keyword}
Vector-valued function approximation; Structure-preserving quasi-interpolation; Matrix-valued kernel; Helmholtz--Hodge decomposition; (Generalized) Fourier transform.\\
	AMS Subject Classifications: 41A05, 41A63, 41A65, 65D15, 65D30.
\end{keyword}

\end{frontmatter}


\section{\textit{Introduction}}
Approximating vector-valued functions is a fundamental problem in numerous scientific and engineering fields, including fluid mechanics, electromagnetics, meteorology, and optical flow analysis \cite{Benbourhim1,Kressner}. In these applications the vector field typically possesses intrinsic  physical structures, which are not merely incidental mathematical properties but essential manifestations of physical laws. Constructing numerical algorithms that \textbf{exactly} preserve such structures throughout the approximation process is therefore crucial for the reliability and physical consistency of numerical results, predictive models, and engineering simulations. Driven by this need, physics-informed approximation has emerged as a pivotal interdisciplinary research direction at the interface of approximation theory and computational physics \cite{Ryck}.

The divergence-free constraint (incompressible fluid flow) and the curl-free constraint (magnetic fields in classical electrodynamics) are the two most familiar instances of such physical structures. More importantly, the Helmholtz--Hodge decomposition \cite{Schwarz} shows that every smooth vector field splits into a divergence-free and a curl-free component. Accordingly, there has been a substantial body of work on approximating divergence-free and curl-free vector fields. Examples include divergence-free/curl-free interpolation (\cite{DoduandRabut}, \cite{Drake}, \cite{Fuselier, Fuselier1}, \cite{McNally}, \cite{Narcowich, Narcowich1}, \cite{VenelandBeatson}),  (moving) least squares approximation with divergence/curl constraints (\cite{Benbourhim0, Benbourhim1}, \cite{DoduandRabut0}, \cite{Freeden}, \cite{MirzaeiandMohammadi}), vector field decomposition (\cite{Deriaz}, \cite{Fuselier1, Fuselier2}, \cite{McNally}, \cite{pp}), and numerical solutions of certain differential equations (\cite{Wendland1}, \cite{Wendland3}, \cite{Wendland5}). Recently, by constructing a divergence-free matrix-valued kernel from thin plate splines (\cite{Rabut0, Rabut, Rabut1}), the authors of \cite{Gaoetal4} obtained a divergence-free approximation based on quasi-interpolation---a basic approximation tool that has been extensively studied and widely used, see \cite{Buhmann, Buhmann1, BuhmannandDyn, BuhmannBook}, \cite{Wendland7}, \cite{Gaoetal2, Gaoetal3,Gaoetal5, Gaoetal1}, \cite{Ortmann, Ortmann1}, \cite{CR, Rabut0}, \cite{ZMRS, GeneralizedWu} and the references therein. They subsequently extended the idea to divergence-free, curl-free, and harmonic quasi-interpolation, and applied it to the Helmholtz--Hodge decomposition \cite{Fisher}.

Other physical constraints have been studied as well. Amodei and Benbourhim \cite{Amodei} constructed a vector quasi-interpolant from a vector spline function, namely the unique minimizer of a quadratic functional involving both the divergence and the curl of the field. Atteia, Benbourhim, and Casanova \cite{Atteia} extended this idea to quasi-interpolation on spline elastic manifolds. Atteia \cite{Atteia1} introduced the notion of a $P(D)$ manifold, with $P(D)$ a first-order homogeneous differential operator, and constructed the corresponding quasi-interpolant. Benbourhim and Casanova \cite{Benbourhim} developed a technique based on generalized variation in tensor-product Sobolev spaces. Chen and Suter \cite{ChenandSuter} constructed div-curl vector quasi-interpolation on a bounded domain by means of boundary extension.

To the best of our knowledge, however, no theory is available for constructing vectorial quasi-interpolation that \textbf{exactly} preserves physical constraints of a more general form, and in particular several constraints simultaneously. The present paper aims to fill this gap.

We first generalize the conventional divergence-free constraint to a broader family of dot-product constraints  formulated via a system of $n$ linearly independent, constant-coefficient homogeneous differential equations: \begin{equation}\label{physicalinvariants}
P_l(-D)\cdot \bold u_m=0, \quad \text{for all}\quad 1\leq l\leq n.
 \end{equation}
Here $d$, $m$, $n$ are positive integers and $\alpha=(\alpha_1,\alpha_2,\cdots,\alpha_d)$ is a multi-index of length $|\alpha|=\sum_{j=1}^d\alpha_j$. We write $D^{\alpha}=\prod_{j=1}^d\partial_{x_j}^{\alpha_j}$ for the differential operator of order $|\alpha|$ with $\partial_{x_j}=\frac{\partial}{\partial x_j}$, and $P_l(D)=\sum_{|\alpha|\leq r_l}\bold{\gamma}_{l}^{\alpha}D^{\alpha}$ for an $m\times 1$ column vector differential operator of order $r_l$ with constant vector coefficients $\bold{\gamma}_{l}^{\alpha}=(\gamma_{l,1}^{\alpha},\gamma_{l,2}^{\alpha},\cdots,\gamma_{l,m}^{\alpha})^T$. Throughout, $T$ denotes transposition, and all components of the vector-valued function $\bold u_m=(u_1,u_2,\cdots,u_m)^T$ are assumed to lie in the Sobolev space $W_p^r(\R^d)$ with $r>d/p$. In the particular case $n=1$, $m=d$, and $P_1(D)=\nabla=(\partial_{x_1},\cdots,\partial_{x_d})^T$, constraint \eqref{physicalinvariants} reduces to $P_1(-D)\cdot \bold u_d=-\nabla\cdot \bold u_d=0$ (the divergence-free constraint \cite{Gaoetal4}). More generally, \eqref{physicalinvariants} covers the steady-state acoustics constraints \cite{Atteia1}, the divergence-curl constraints \cite{ChenandSuter}, the coupled divergence-free constraints of magnetohydrodynamics \cite{Li}, the curl-free constraint \cite{Fisher}, and many others.

To construct quasi-interpolation that \textbf{exactly} preserves the dot-product constraints \eqref{physicalinvariants}, we first build a matrix-valued kernel each of whose columns, viewed as a vector-valued function, \textbf{analytically} satisfies \eqref{physicalinvariants}. This is done in two steps. The first step specifies the kernel through its Fourier transform,
$$\widehat{\mathbb{K}}_{\ell,k,m,n,h}(\bm{\omega})=\Bigg[I_{m\times m }-P(-i\bm{\omega})G^\dagger(-i\bm{\omega})P^*(-i\bm{\omega})\Bigg]\widehat{\psi}_{\ell,k,h}(\bm{\omega}).$$  Here $I_{m\times m}$ is  an $m\times m$ identity matrix, $P(-i\bm{\omega})=[P_1(-i\bm{\omega}),P_2(-i\bm{\omega}),\cdots, P_n(-i\bm{\omega})]\in \mathbb{C}^{m\times n}$, $P^*(-i\bm{\omega})$  is the Hermitian conjugate of the matrix $P(-i\bm{\omega})$, namely, $P^*(-i\bm{\omega}):=P^T(i\bm{\omega})=[P_1^T(i\bm{\omega}),P_2^T(i\bm{\omega}),\cdots, P_n^T(i\bm{\omega})]^T\in \mathbb{C}^{n\times m}$, and $h$ is a positive tuning parameter. Furthermore, $G^\dagger(-i\bm{\omega})$ denotes the Moore--Penrose pseudo-inverse of the $n\times n$ matrix $G(-i\bm{\omega}):=P^*(-i\bm{\omega})P(-i\bm{\omega})$. We take the $h$-embedded  level-$k\ell$-type polyharmonic spline $\psi_{\ell,k,h}$ of Equation \eqref{polyharmonicspline} as a concrete choice, on account of its popularity and favourable properties, generalized polyharmonic splines \cite{Ortmann1} and other functions \cite{BuhmannBook} could be used equally well. In the second step, taking the inverse Fourier transform of $\widehat{\mathbb{K}}_{\ell,k,m,n,h}$ yields the kernel in the spatial domain in the convolution form
$\mathbb{K}_{\ell,k,m,n,h}=I_{m\times m}\psi_{\ell,k,h}-(P(D)\Phi_n)*(P^T(-D)\psi_{\ell,k,h})$, where $\Phi_n$ is defined in the distributional sense by the requirement $P(-i\bm\omega)\widehat{\Phi}_n(\bm\omega)P^*(-i\bm\omega)=P(-i\bm{\omega})G^\dagger(-i\bm{\omega})P^*(-i\bm{\omega}),\ \bm\omega\neq\bold 0$. In Particular, when $G^\dagger=G^{-1}$ is well-defined, then $\Phi_n$ is the generalized Green's matrix of the matrix differential operator $G(D)=P^T(-D)P(D)$.

Two features of our construction deserve emphasis. First, the scale parameter $h$ must be built into the polyharmonic spline $\psi_{\ell,k,h}$ itself, as this is what makes the operator transfer \eqref{interchange} legitimate, see Remark \ref{remarkonscale}. Second,  using the pseudo-inverse $G^\dagger$  rather than the inverse is essential: the columns of $P(-i\bm\omega)$ may   be linearly dependent at every frequency, as happens for the curl-type constraints   in Section $4$, and in such cases the ordinary inverse does not exist.

We next construct the convolution sequence $\mathbb{K}_{\ell,k,m,n,h}*\bold u_m$ and establish in Lemma \ref{matrixconvolution} the error estimates for the constrained vector field $\bold u_m$ satisfying dot-product constraints \eqref{physicalinvariants}, as well as for its derivatives. The proposed quasi-interpolant is derived by discretizing the aforementioned convolution sequence via a quadrature rule, which can be either deterministic or stochastic. We adopt the deterministic framework and employ the rectangular quadrature formula as a concrete realization, yielding a Schoenberg-type quasi-interpolant $Q_{\mathbb{K}_{\ell,k,m,n,h}}\bold u_m$, as formulated in Equation \eqref{quasiinterpolationofourpaper}. Corresponding $L^p$-error estimates for the quasi-interpolation approximation and its derivatives are further proven in Theorem \ref{errorestimateforderi}, Theorem \ref{errorestimateforderiLp}, Theorem \ref{errorestimateforderiL1}. Notably, the convolution sequence serves only as an auxiliary intermediate tool for bias-variance error analysis, the final quasi-interpolation scheme does not require any explicit convolution computation.

   As an application, we employ our structure-preserving quasi-interpolation approach to numerically decompose a general smooth vector-valued function $\bold u_3$ under the case  $m=3$, $n=1$.  Motivated from  Helmholtz--Hodge decomposition  \cite{Schwarz}, we first  propose a generalized Helmholtz--Hodge decomposition (see Lemma \ref{ghh}), which  decomposes a general smooth vector $\bold u_3$   into two parts  that respectively satisfies dot-product constraint \eqref{physicalinvariants} and cross-product constraint \begin{equation}\label{otherphysicalinvariants}
P_1(D)\times \bold u_3= \bold 0_3^T.
 \end{equation}
  We note that the cross-product constraint is a natural generalization of curl-free constraint and thus can be viewed as a duality of the dot-product constraint \eqref{physicalinvariants} with  $m=3$, $n=1$. Moreover, we can construct a matrix-valued kernel $$
 \mathbb{J}_{\ell,k,3,1,h}(\bold x):=(P_1(D)\Phi_1)* (P_1^T(-D)\psi_{\ell,k,h})(\bold x), \ \bold x\in \R^d,$$ such that it satisfies cross-product constraint \eqref{otherphysicalinvariants} column-wise.
 Then we construct two  convolution sequences  $\mathbb{J}_{\ell,k,3,1,h}*\bold u_3$ and  $\mathbb{K}_{\ell,k,3,1,h}*\bold u_3$ to approximate corresponding  parts of the generalized Helmholtz--Hodge decomposition and derive their  simultaneous error estimates in Lemma \ref{crosprojectionconvolutionerror} and Lemma \ref{dotprojectionconvolutionerror}. Finally, by discretizing these two convolution sequences, we get two  quasi-interpolation schemes $Q_{\mathbb{K}_{\ell,k,3,1,h}}\bold u_3$ and $Q_{\mathbb{J}_{\ell,k,3,1,h}}\bold u_3$ satisfying dot-product constraint \eqref{physicalinvariants} and cross-product constraint \eqref{otherphysicalinvariants}, respectively. Corresponding error estimates are derived in Theorem \ref{errorestimateforderi2}, which demonstrates that these two quasi-interpolation schemes together form a numerical decomposition of a general smooth vector field $\bold u_3$.

  The paper makes three main contributions. Firstly, leveraging orthogonal projection, we develop a general and  efficient framework to construct matrix-valued kernels   that \textbf{analytically} satisfy  multiple physical constraints. The resulting kernels contain the classical divergence-free and curl-free kernels as special cases, as well as those with tensor parameters corresponding to the pseudo-differential operators considered in \cite{Benbourhim1}, \cite{Bouhamidi}, \cite{Bouhamidi1}. Constructing kernels with prescribed properties has long been an active theme in approximation theory (see, e.g., \cite{Gaoetal1}, \cite{Schabackopti, Schabackernel, Schabackandwu}, \cite{GeneralizedWu}).  Different from existing studies, our work targets kernels subject to a system of differential constraints with arbitrary, possibly unequal orders and a genuinely singular Gram symbol.  Secondly, we establish a unified theory of vectorial quasi-interpolation that \textbf{exactly} preserves intrinsic physical structures of the approximand. It yields a structure-preserving approximation directly without solving any constrained optimization problem. Thirdly, we propose a generalized Helmholtz--Hodge decomposition paired with structure-preserving quasi-interpolation schemes, which enables a  quasi-interpolation-based  approach for numerical decomposition of general smooth vector fields.

 The remainder of the paper is organized as follows. Section $2$ has two parts: the first one reviews notations and definitions, while the second one develops a construction of $h$-embedded level-$k\ell$-type polyharmonic splines and studies their properties in the spatial and frequency domains. Section $3$ details the construction of structure-preserving vectorial quasi-interpolation schemes and their application to the numerical decomposition of a general smooth vector field. Section $4$ presents concrete examples of matrix-valued kernels for various physical constraints; with these kernels one obtains quasi-interpolation schemes that \textbf{exactly} preserve the corresponding physical structures of the target function. Section $5$ comprises two numerical validations: Subsection $5.1$ presents numerical simulations that verify the theoretical convergence rates and the exact structure-preserving property of the proposed schemes, while Subsection $5.2$ illustrates the capability of the developed quasi-interpolation method for numerical vector field decomposition. Conclusions and  discussions are given in Section $6$, and a technical proof is collected in the Appendix.

\section{\textit{Preliminaries and notations}}
This section collects the basic material used for constructing our quasi-interpolation and deriving its error estimates. We begin with notation and definitions.
\subsection{\textit{Notation and definitions}}
Let  $1 \leq p \leq \infty$ and $0\leq r\leq \infty$ be two given constants.  Denote by  $W_p^r(\R^d)$ the Sobolev space, which, for $r=0$, is just the  Lebesgue space  consisting of functions $f$  satisfying $\|f\|_{p} < \infty$. Throughout we assume $r>d/p$, so that $W_p^r(\R^d)$ is embedded into a space of continuous functions. The convolution of a function $f$ with a function $g$ (or of the distributions associated to the functions $f$ and $g$) is defined via
$f*g(\bold x)=\int_{\R^d}f(\bold y)g(\bold x-\bold y)d\bold y$.
   Moreover, by letting $\mathfrak{F}(f)$  be the (generalized) Fourier transform of $f$ defined (in the distributional sense \cite{Stein}) as
$$\mathfrak{F}(f):=\hat{f}(\bm{\omega})=(2\pi)^{-d/2}\int_{\mathbb{R}^d}f(\bold x)e^{-i\bm{\omega}\cdot \bold x} d\bold x,\ \text{where}\ \bm{\omega}\cdot \bold x=\sum_{j=1}^d \omega_j x_j,$$ then we have the celebrated identity $\mathfrak{F}(f*g)=\hat{f}\hat{g}$.

Denote by $(W_p^r(\R^d))^{m}$ and $(W_p^r(\R^d))^{n\times m}$ the vector-valued and matrix-valued Sobolev space consisting of elements whose components are all in $W_p^r(\R^d)$, respectively. For $\bold u_m=(u_1,u_2,\cdots, u_m)^T\in (W_p^r(\R^d))^{m}$, we define its norm via $\|\bold u_m\|_p=\sum_{j=1}^m\|u_j\|_p$, while  for an ${n\times m}$ matrix-valued  function $\bold F=(f_{jk})$, we  define its norm as $\|\bold F\|_p=\sum_{j,k}\|f_{jk}\|_p$. In addition, we define the convolution of a matrix-valued function with a vector-valued function as $$(\bold F*\bold u_m)(\bold x)=\int_{\R^d}\bold F(\bold y)\bold u_m(\bold x-\bold y)d\bold y=\bold h(\bold x)=(h_1(\bold x), h_2(\bold x),\cdots, h_n(\bold x))^T$$ with $h_j(\bold x)=\sum_{k=1}^m f_{jk}*u_k(\bold x)$ for $1\leq j\leq n$. Likewise, the convolution of two matrix-valued functions is defined by $$(\bold F*\bold G)(\bold x)=\int_{\R^d}\bold F(\bold y)\bold G(\bold x-\bold y)d\bold y=\bold H(\bold x)=(h_{ij}(\bold x))$$ with $h_{ij}(\bold x)=\sum_{k=1}^m f_{ik}*g_{kj}(\bold x)$.
 As usual, the Fourier transform operator $\mathfrak{F}$ is applied componentwise to matrix- and vector-valued functions. Then we have $\mathfrak{F}(\bold F*\bold u_m)=\mathfrak{F}(\bold F)\cdot\mathfrak{F}(\bold u_m)$ and $\mathfrak{F}(\bold F*\bold G)=\mathfrak{F}(\bold F)\cdot\mathfrak{F}(\bold G)$.
\subsection{\textit{(Generalized) thin plate splines}}
Thin plate splines were introduced by Duchon in $1977$ \cite{Duchon} as a multivariate analogue of univariate splines. Denote by $\nabla=(\partial_{x_1},\cdots,\partial_{x_d})^T$ the gradient operator and $\Delta=\nabla^T\nabla$ the Laplace operator. Let $\ell$ be a positive integer and $\Delta^{\ell}$ be the $\ell$th iterate of $\Delta$. The thin plate spline $\phi_{\ell}$ is defined as   a fundamental solution of $\Delta^{\ell}$  in a distributional sense, that is, $\Delta^{\ell}\phi_{\ell}=\delta$.   In particular, we have
\begin{equation}\label{fundamentalsolution}
 \phi_{\ell}(\bold x)=
\begin{cases}
E_{\ell,d}\|\bold x\|^{2\ell-d}, \text{for an odd}\ d,\\
E_{\ell,d}\|\bold x\|^{2\ell-d}\ln\|\bold x\|, \ \text{for an even}\ d,
\end{cases}
\end{equation}
where $$E_{\ell,d}=\frac{\Gamma(d/2)}{2^{\ell}\pi^{d/2}(\ell-1)!\prod_{j=0,j\neq \ell-d/2}^{\ell-1}(2\ell-2j-d)}.$$
Moreover, taking finite linear combinations of integer translates of $\phi_{\ell}$, Rabut \cite{Rabut0} first constructed the level-$k\ell$ polyharmonic splines $\psi_{\ell,k}(\bold x)$ for $k\leq \ell$ and then used their scaled counterparts $h^{-d}\psi_{\ell,k}(\bold x/h)$ to build quasi-interpolants \cite{Rabut0, Rabut, Rabut1}. We extend this idea to construct $h$-embedded level-$k\ell$-type polyharmonic splines $\psi_{\ell,k,h}(\bold x)$ in which the scale parameter is built into the spline itself, so that they can be used directly for quasi-interpolation. As will become clear in Remark \ref{remarkonscale}, this reformulation is in fact indispensable for our construction.

Define the $j^{\text{th}}$-direction component $(\widetilde{\Delta_h})_j$ of the central difference operator $\widetilde{\Delta_h}$ of step-size $h$ by \begin{equation*}
(\widetilde{\Delta_h})_jf=f(\cdot -\bold e_jh)-2f(\cdot)+f(\cdot+\bold e_jh), \quad j=1,2,\cdots,d.\end{equation*} One verifies at once that $h^{-2}\widetilde{\Delta_h}f$ converges to $\Delta f$ as $h$ tends to zero. Following Rabut \cite{Rabut0}, we then define the $h$-embedded level-$k\ell$-type polyharmonic spline as
\begin{equation}\label{polyharmonicspline}
\psi_{\ell,k,h}(\bold x)=(-1)^{\ell}h^{-2\ell}q_{d,\ell,k}(\widetilde{\Delta_h})\phi_{\ell}(\bold x), \quad  \bold x\in \R^d.
\end{equation}
Here $q_{d,\ell,k}$ is an $(\ell+k)$th-order truncation of  a polynomial $p_{d,\ell,k}$ (of degree $k\ell$ with respect to $\widetilde{\Delta_h}$)  defined as
$$p_{d,\ell,k}(\widetilde{\Delta_h})=\Bigg(\sum_{i=0}^{k-1}a_i\Bigg(\sum_{j=1}^d(\widetilde{\Delta_h})_j^{i+1}\Bigg)\Bigg)^{\ell},\ \text{where}\  a_i=(-1)^i2(i!)^2/(2i+2)!.$$ Taking $h=1$ recovers the level-$k\ell$ polyharmonic spline $\psi_{\ell,k}$. More importantly, for general $h$ the following two identities hold (see  \ref{app:scaling} for a detailed proof): \begin{equation}\label{keyrelation}
\psi_{\ell,k,h}(\bold x)=h^{-d}\psi_{\ell,k}(\bold x/h), \ \widehat{\psi}_{\ell,k,h}(\bm\omega)=\widehat{\psi_{\ell,k}}(h\bm\omega).
\end{equation}

Consequently, $\psi_{\ell,k,h}$ can be characterized in both the spatial and the frequency domain.
\begin{lemma}\label{polyharmonicsplinepropert}
Let $\psi_{\ell,k,h}$ be defined in Equation \eqref{polyharmonicspline}.  Then we have $\psi_{\ell,k,h}\in C^{2\ell-d-1}(\R^d)$  and
\begin{equation*}D^{\alpha}\psi_{\ell,k,h}(\bold x)=\mathcal{O}(h^{2k-2|\alpha|}\|\bold x\|^{-d-2k+|\alpha|})
\end{equation*}
holds true  for any $0\leq |\alpha|< 2k$  as  $\|\bold x\|$ tends to infinity. In addition, the Fourier transform of $\psi_{\ell,k,h}$ reads \begin{equation*}
      \widehat{\psi}_{\ell,k,h}(\bm{\omega})=\frac{q_{d,\ell,k}(-4\sin^2(h\bm{\omega}/2))}{\|h\bm{\omega}\|^{2\ell}}
      \end{equation*}
and  thus satisfies the conditions   \begin{equation}\label{Strangfix}
\begin{cases}
\widehat{\psi}_{\ell,k,h}\in C^{2k}(\R^d),\\
D^{\alpha}\widehat{\psi}_{\ell,k,h}(\bold 0)=\delta_{0,\alpha},\ 0\leq |\alpha|< 2k,\\
D^{\alpha}\widehat{\psi}_{\ell,k,h}(2\pi \bold j/h)=0, \  0\leq |\alpha|< 2k, \bold j\in \mathbb{Z}^d/\{\bold 0\}.
\end{cases}
\end{equation}
\end{lemma}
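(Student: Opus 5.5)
The plan is to use the scaling identities \eqref{keyrelation}: they reduce every claim to the case $h=1$ (Rabut's level-$k\ell$ spline $\psi_{\ell,k}$), after which the factors of $h$ are tracked by the chain rule. The smoothness $\psi_{\ell,k,h}\in C^{2\ell-d-1}(\R^d)$ comes directly from \eqref{fundamentalsolution}. The function $\phi_\ell$ is $C^\infty$ away from the origin, and near $\bold 0$ it behaves like $\|\bold x\|^{2\ell-d}$, possibly times $\ln\|\bold x\|$, so it lies in $C^{2\ell-d-1}$. Since $\psi_{\ell,k,h}$ is a finite linear combination of translates of $\phi_\ell$ (the operator $q_{d,\ell,k}(\widetilde{\Delta_h})$ being a finite difference operator), it inherits this regularity.

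For the Fourier transform, I would use the generalized Fourier transform $\widehat{\phi_\ell}(\bm\omega)=(-1)^\ell\|\bm\omega\|^{-2\ell}$, which follows from $\Delta^\ell\phi_\ell=\delta$ (up to the normalization constant of $\mathfrak F$, absorbed by the convention). The shift $f(\cdot\pm\bold e_jh)$ has symbol $e^{\pm ih\omega_j}$, so $(\widetilde{\Delta_h})_j$ has symbol $e^{-ih\omega_j}-2+e^{ih\omega_j}=-4\sin^2(h\omega_j/2)$. Substituting into \eqref{polyharmonicspline} gives
\begin{equation*}
\widehat{\psi}_{\ell,k,h}(\bm\omega)=h^{-2\ell}\,q_{d,\ell,k}(-4\sin^2(h\bm\omega/2))\,\|\bm\omega\|^{-2\ell}=\frac{q_{d,\ell,k}(-4\sin^2(h\bm\omega/2))}{\|h\bm\omega\|^{2\ell}}.
\end{equation*}

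For the Strang--Fix conditions \eqref{Strangfix}, I would first work at $h=1$. The coefficients $a_i$ are the Taylor coefficients of $\arcsin^2$: one has $\sum_{i\ge0}a_i(-4\sin^2(t/2))^{i+1}=-t^2$, i.e.\ $t^2=\sum_i |a_i|\,(2\sin(t/2))^{2i+2}$. Hence
\begin{equation*}
\sum_{i=0}^{k-1}a_i\sum_j(-4\sin^2(\omega_j/2))^{i+1}=-\|\bm\omega\|^2+\mathcal O(\|\bm\omega\|^{2k+2}).
\end{equation*}
Raising this to the $\ell$th power, with the sign $(-1)^\ell$, and truncating to $q_{d,\ell,k}$ changes only terms of order $\|\bm\omega\|^{2\ell+2k}$. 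Dividing by $\|\bm\omega\|^{2\ell}$ then gives $\widehat{\psi}_{\ell,k}(\bm\omega)=1+\mathcal O(\|\bm\omega\|^{2k})$. The truncation is designed precisely so that the numerator agrees with $\|\bm\omega\|^{2\ell}$ up to order $2\ell+2k$.

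Near $2\pi\bold j$ with $\bold j\ne\bold 0$, the denominator is smooth and nonzero. Every monomial in $q_{d,\ell,k}$ has total degree at least $\ell\ge k$ in the variables $\sin^2(\omega_j/2)$, each of which vanishes to second order at $2\pi\bold j$. This yields a zero of order $2k$, which gives the derivative conditions. Smoothness of class $C^{2k}$ away from $\bold 0$ is clear. At $\bold 0$ the quotient of an analytic numerator by $\|\bm\omega\|^{2\ell}$ equals $1$ plus a homogeneous-type remainder of order $2k$, so $C^{2k}$ regularity there requires checking that this remainder is $C^{2k}$. I expect this to be the delicate point and would cite Rabut's analysis for it. The $h$-versions then follow from $\widehat{\psi}_{\ell,k,h}(\bm\omega)=\widehat{\psi_{\ell,k}}(h\bm\omega)$, because the scaling moves the lattice $2\pi\mathbb Z^d$ to $2\pi\mathbb Z^d/h$ and derivatives at $\bold 0$ pick up factors $h^{|\alpha|}$, which preserves $\delta_{0,\alpha}$.

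For the decay estimate, the standard argument for $h=1$ is that $\widehat{\psi_{\ell,k}}-1$ has a singularity at $\bold 0$ of order $2k$ and is otherwise smooth. This implies $D^\alpha\psi_{\ell,k}(\bold x)=\mathcal O(\|\bold x\|^{-d-2k-|\alpha|})$; spatially, this amounts to expanding the finite difference of $\phi_\ell$ in Taylor series and using the cancellation of moments up to order $2k$. This is weaker than needed, and the claimed bound is stated as the looser $\|\bold x\|^{-d-2k+|\alpha|}$, so it holds a fortiori. Scaling then gives
\begin{equation*}
D^\alpha\psi_{\ell,k,h}(\bold x)=h^{-d-|\alpha|}(D^\alpha\psi_{\ell,k})(\bold x/h)=\mathcal O\big(h^{-d-|\alpha|}(\|\bold x\|/h)^{-d-2k-|\alpha|}\big)=\mathcal O\big(h^{2k}\|\bold x\|^{-d-2k-|\alpha|}\big).
\end{equation*}
To match the form $h^{2k-2|\alpha|}\|\bold x\|^{-d-2k+|\alpha|}$, I would write $h^{2k}\|\bold x\|^{-d-2k-|\alpha|}=h^{2k-2|\alpha|}\|\bold x\|^{-d-2k+|\alpha|}\,(h/\|\bold x\|)^{2|\alpha|}$ and note that $h/\|\bold x\|\le1$ for large $\|\bold x\|$. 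The main obstacle remains the precise decay and regularity bookkeeping at $h=1$, which I would take from Rabut's work.
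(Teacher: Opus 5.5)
You follow the paper's route: its proof is just the reduction to $h=1$ through \eqref{keyrelation} plus a citation of Rabut. The pieces you make explicit are sound: the $C^{2\ell-d-1}$ smoothness, the symbol $-4\sin^2(h\omega_j/2)$ of $(\widetilde{\Delta_h})_j$, the $\arcsin^2$ identity behind the $a_i$, the zeros of order $2\ell\geq 2k$ at $2\pi\mathbf j\neq\mathbf 0$, and the decay bound $\mathcal O(h^{2k}\|\mathbf x\|^{-d-2k-|\alpha|})$. That bound is \emph{stronger} than the stated one, not weaker as you wrote. Two steps, however, do not go through, and both reflect defects in the statement itself that the paper's one-line proof leaves untouched.

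\textbf{The sign.} In your formula the prefactor $(-1)^\ell$ of \eqref{polyharmonicspline} has already cancelled against $\widehat{\phi_\ell}=(-1)^\ell\|\bm\omega\|^{-2\ell}$. Your own expansion gives $q_{d,\ell,k}(-4\sin^2(\bm\omega/2))=(-1)^\ell\|\bm\omega\|^{2\ell}+\mathcal O(\|\bm\omega\|^{2\ell+2k})$. Dividing therefore yields $\widehat{\psi_{\ell,k}}(\bm\omega)=(-1)^\ell+\mathcal O(\|\bm\omega\|^{2k})$; the phrase ``with the sign $(-1)^\ell$'' spends the same factor a second time. So for odd $\ell$ the normalization $\widehat{\psi}_{\ell,k,h}(\mathbf 0)=1$ is false as the spline is defined. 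For $d=\ell=k=1$ one gets $\psi_{1,1,1}(x)=-\tfrac12\big(|x-1|-2|x|+|x+1|\big)$, the negative hat function. The prefactor $(-1)^\ell$ must be removed from \eqref{polyharmonicspline}; nothing changes for the even $\ell$ used in the experiments.

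\textbf{The $C^{2k}$ step you defer to Rabut.} Regularity of $\widehat{\psi}_{\ell,k,h}$ of class $C^{2k}$ at the origin cannot be supplied, because it is false in general when $d\geq 2$. Your own description of the remainder shows why. Its leading part is homogeneous of degree $2k$ but is not a polynomial, since the discrete Laplacian is not rotation invariant. Hence its derivatives of order $2k$ are homogeneous of degree zero: bounded, but discontinuous at $\mathbf 0$. Concretely, take $k=1$. Then $\sum_j 4\sin^2(\omega_j/2)=\|\bm\omega\|^2-\tfrac{1}{12}\sum_j\omega_j^4+\mathcal O(\|\bm\omega\|^6)$, so $(-1)^\ell\widehat{\psi_{\ell,1}}(\bm\omega)=1-\tfrac{\ell}{12}\sum_j\omega_j^4/\|\bm\omega\|^2$ up to a remainder of class $C^3$ near $\mathbf 0$. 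For $d=2$, the function $(\omega_1^4+\omega_2^4)/\|\bm\omega\|^2$ has $\partial_{\omega_2}^2$ equal to $-2$ along the $\omega_1$-axis and $2$ along the $\omega_2$-axis. Therefore $\widehat{\psi_{\ell,1}}\notin C^2(\R^2)$.

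What your argument does establish, after the sign correction, is the following. $\widehat{\psi}_{\ell,k,h}$ lies in $C^{2k-1}(\R^d)\cap C^\infty(\R^d\setminus\{\mathbf 0\})$, has bounded derivatives of order $2k$, and satisfies $\widehat{\psi}_{\ell,k,h}(\bm\omega)-1=\mathcal O(\|h\bm\omega\|^{2k})$ near $\mathbf 0$. That gives the second and third conditions in \eqref{Strangfix}. It is also all the Taylor step \eqref{taylorstep} needs, because that expansion is taken only about the points $2\pi\mathbf j\neq\mathbf 0$, where the symbol is analytic.
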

\begin{proof}
This is a direct consequence of  Proposition $3$ in Rabut \cite{Rabut0}  together with Identity \eqref{keyrelation}.
\end{proof}
In addition,   we have   the following simultaneous error estimates based on Theorem $2.1$ in reference \cite{Leiandjia}.
 \begin{lemma}\label{convolution}
   Let $\psi_{\ell,k,h}*f$ be a convolution sequence with $\psi_{\ell, k,h}$ being the level-$k\ell$-type polyharmonic spline defined in Equation \eqref{polyharmonicspline}. Then, for any $ f\in W^r_p (\R^d)$, we have the simultaneous error estimates
$$\|\psi_{\ell,k,h}*(D^{\alpha}f)-D^{\alpha}f\|_p = \mathcal{O}(h^{2k})$$
  for  $0\leq |\alpha|< \min\{2k, r-2k\}$ with $r-2k>d/2$.
 \end{lemma}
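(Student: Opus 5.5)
The plan is to reduce the estimate to the classical Strang--Fix-type approximation theory for convolution operators, i.e.\ Theorem $2.1$ of \cite{Leiandjia}. The properties needed are exactly those collected in Lemma \ref{polyharmonicsplinepropert}. The scale enters only through the identity \eqref{keyrelation}, $\psi_{\ell,k,h}=h^{-d}\psi_{\ell,k}(\cdot/h)$. So it suffices to treat the fixed kernel $\psi:=\psi_{\ell,k}$ and its dilations $\psi_h:=h^{-d}\psi(\cdot/h)$, which is precisely the setting of \cite{Leiandjia}.

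\textbf{Step 1: check the hypotheses on the fixed kernel.} By Lemma \ref{polyharmonicsplinepropert} with $h=1$:
\begin{itemize}
\item the moment conditions $D^\alpha\widehat{\psi}(\bold 0)=\delta_{0,\alpha}$ hold for $|\alpha|<2k$;
\item $\widehat{\psi}\in C^{2k}$;
\item the decay $D^\alpha\psi(\bold x)=\mathcal{O}(\|\bold x\|^{-d-2k+|\alpha|})$ holds at infinity, with $\psi$ locally integrable.
\end{itemize}
The condition at $2\pi\bold j$ is not needed for convolution, only for the later discretization. One point needs care: the decay $\|\bold x\|^{-d-2k}$ gives $\psi\in L^1$, but only moments of order strictly less than $2k$ are absolutely integrable. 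I would therefore use the Fourier-side formulation. Write
\[
\widehat{\psi_h*g}-\widehat g=(\widehat\psi(h\bm\omega)-1)\,\widehat g(\bm\omega),
\]
and note that $|\widehat\psi(\bm\eta)-1|\le C\min\{\|\bm\eta\|^{2k},1\}$. This bound follows from the vanishing of the first $2k-1$ derivatives at the origin, $C^{2k}$ smoothness near $\bold 0$, and boundedness of $\widehat\psi$, which is visible from the explicit formula $q_{d,\ell,k}(-4\sin^2(\bm\eta/2))/\|\bm\eta\|^{2\ell}$.

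\textbf{Step 2: apply with $g=D^\alpha f$.} The derivative bookkeeping is where the restriction on $|\alpha|$ comes from. For $f\in W_p^r$ with $|\alpha|<r-2k$, we have $D^\alpha f\in W_p^{r-|\alpha|}$ with $r-|\alpha|>2k$. Thus $g$ has $2k$ further derivatives in $L^p$, with margin $r-|\alpha|-2k$. Theorem $2.1$ of \cite{Leiandjia} then gives
\[
\|\psi_h*g-g\|_p\le C h^{2k}\sum_{|\beta|=2k}\|D^\beta g\|_p=\mathcal{O}(h^{2k}).
\]
The condition $r-2k>d/2$ is the extra smoothness that theorem requires to control the multiplier $(\widehat\psi(h\cdot)-1)/\|h\cdot\|^{2k}$ as an $L^p$ multiplier. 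For $p=2$ this is immediate by Plancherel. For general $p$, one can argue via a Taylor expansion with integral remainder in the spatial domain, using the $2k$-th order remainder together with the decay of $\psi$, and absorbing the boundary-order moment by the extra $d/2$ smoothness. The condition $|\alpha|<2k$ matches the range of the decay estimate in Lemma \ref{polyharmonicsplinepropert}.

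\textbf{Main obstacle.} The hardest step is the borderline moment of order exactly $2k$, where $\int\|\bold x\|^{2k}|\psi|$ diverges logarithmically. I expect to handle it by appealing directly to \cite{Leiandjia}, whose hypothesis is phrased through $\widehat\psi\in C^{2k}$ and the Sobolev surplus $r-2k>d/2$, rather than through absolute moments. Once that is granted, the $h$-dependence is purely by dilation via \eqref{keyrelation}, and the estimate follows uniformly in $h$.
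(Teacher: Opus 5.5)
Your route is essentially the paper's. The paper gives no argument beyond invoking Theorem~2.1 of \cite{Leiandjia} together with the kernel properties of Lemma~\ref{polyharmonicsplinepropert}. Your dilation reduction via \eqref{keyrelation}, the hypothesis check, and the substitution $g=D^{\alpha}f\in W_p^{r-|\alpha|}$ just make that citation explicit.

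One intermediate claim is wrong as written, though: the bound $\|\psi_h*g-g\|_p\le Ch^{2k}\sum_{|\beta|=2k}\|D^{\beta}g\|_p$. It holds for $1<p<\infty$ by Mikhlin's theorem, but it fails for $p\in\{1,\infty\}$ once $d\ge 2$.
\begin{itemize}
\item Already for $k=1$, $1-\widehat{\psi}(\bm\eta)=c\sum_j\eta_j^4/\|\bm\eta\|^{2}+\mathcal{O}(\|\bm\eta\|^4)$ with $c\neq 0$.
\item Divide your bound by $h^{2}$, let $h\to 0$, and test on $g=\Delta u$ with $u\in C_c^{\infty}$. This would give $\big\|\sum_j\partial_{x_j}^4u\big\|_p\lesssim\sum_{|\beta|=2}\|D^{\beta}\Delta u\|_p$.
\item Ornstein's non-inequality excludes this, because $\sum_j\eta_j^4$ is not of the form $\|\bm\eta\|^2Q(\bm\eta)$ with $Q$ quadratic.
\end{itemize}
This is exactly your borderline-moment obstacle, seen in frequency. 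It also shows that $\widehat{\psi}$ is only $C^{2k-1}$ at the origin, not $C^{2k}$. Your estimate $|\widehat{\psi}(\bm\eta)-1|\le C\min\{\|\bm\eta\|^{2k},1\}$ nevertheless survives.

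The conclusion $\mathcal{O}(h^{2k})$ is unaffected, but the surplus smoothness must appear on the right-hand side.
\begin{itemize}
\item On a Littlewood--Paley shell $\|\bm\omega\|\sim 2^{j}\le h^{-1}$, the multiplier $\widehat{\psi}(h\bm\omega)-1$ has norm $\lesssim (h2^{j})^{2k}$ on every $L^p$, $1\le p\le\infty$.
\item On the remaining shells it is uniformly bounded.
\item Summing against the shell norms of $g$ gives $\mathcal{O}(h^{2k})$ precisely because $r-|\alpha|>2k$ holds strictly. With exactly $2k$ derivatives one only gets $h^{2k}\log(1/h)$.
\end{itemize}
So what rescues the endpoints is the strict surplus built into $|\alpha|<r-2k$. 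It is not a ``$d/2$'' that turns $(\widehat{\psi}(\bm\eta)-1)/\|\bm\eta\|^{2k}$ into an $L^p$ multiplier; for $p=1,\infty$ it is not one. Your spatial Taylor sketch, as stated, does not supply this step.
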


Furthermore, the corresponding  Schoenberg's model \cite{Schoenberg} $Q_{\ell,h}f(\bold x)=h^d\sum_{\bold j\in \mathbb{Z}^d}f(\bold j h)\psi_{\ell,k,h}(\bold x-\bold j h)$ provides simultaneous approximations  to both $f$ and its derivatives. Corresponding approximation errors can be derived as  \cite{Leiandjia} \begin{equation}\label{simultaneousofpol}
\|D^{\alpha}Q_{\ell,h}f-D^{\alpha}f\|_{p}=\mathcal{O}(h^{2k-|\alpha|})
\end{equation} for any $0\leq |\alpha|<\min\{2k,r-2k,2\ell -d-1\}$ with $r-2k>d/2$.

Estimate \eqref{simultaneousofpol} shows that approximating high-order derivatives of $f$ by those of $Q_{\ell,h}f$ forces one to use higher-order polyharmonic splines $\psi_{\ell,k,h}$, which increases the complexity of the scheme and may impair its stability. Moreover, since $\phi_{\ell}$ contains a logarithmic term for even $d$, a continuous extension around the origin is required. To circumvent these difficulties, Ortmann and Buhmann \cite{Ortmann1} recently proposed a new class of infinitely smooth generalized thin plate splines of the form
 \begin{equation*}
 \phi_{\ell, c}(\bold x)=
\begin{cases}
E_{\ell,d}\sqrt{c^{4\ell-2d}+\|\bold x\|^{4\ell-2d}}, \text{for an odd}\ d,\\
E_{\ell,d}(c^{2\ell-d}+\|\bold x\|^{2\ell-d})\ln(c^{2\ell-d}+\|\bold x\|^{2\ell-d}), \ \text{for an even}\ d,
\end{cases}
\end{equation*}
  where $c$ is a small positive shape parameter. We can replace $\phi_{\ell}$ in Equation \eqref{polyharmonicspline}  with $\phi_{\ell, c}$ to construct a level-$k\ell$ generalized polyharmonic spline
\begin{equation}\label{generalizedpolyharmonicspline}
\psi_{\ell,k,c,h}(\bold x)=(-1)^{\ell}h^{-2\ell}q_{d,\ell,k}(\widetilde{\Delta_h})\phi_{\ell,c,h}(\bold x), \quad  \bold x\in \R^d.
\end{equation} One may expect the resulting Schoenberg model $Q_{\ell,c,h}f(\bold x)=h^d\sum_{\bold j\in \mathbb{Z}^d}f(\bold j h)\psi_{\ell,k,c,h}(\bold x-\bold jh)$ to provide simultaneous approximations to $f$ and its derivatives as well, in analogy with classical multiquadric (MQ) quasi-interpolation \cite{MaandWu}. More importantly, since $\psi_{\ell,k,c,h}$ is infinitely smooth, $D^{\alpha}Q_{1,c,h}f$ may already approximate $D^{\alpha} f$ for a suitable choice of $c$ and $h$. We leave the corresponding error estimates, and the stability of numerical differentiation based on such a scheme, as an interesting direction for future work.
\section{\textit{ The main results}}
This section consists of two parts. The first part concerns the reconstruction, from discrete function values, of a vector-valued function whose physical structure is described by the dot-product constraints \eqref{physicalinvariants}. We develop a general theory of quasi-interpolation that \textbf{exactly} preserves this structure, and derive simultaneous approximation orders for the target function and its derivatives. The second part proposes a quasi-interpolation based method for numerically decomposing a general smooth vector field: we first extend the classical Helmholtz--Hodge decomposition to a general setting, splitting a smooth vector field into two components that satisfy the dot-product constraint \eqref{physicalinvariants} and the cross-product constraint \eqref{otherphysicalinvariants} respectively, and then construct a structure-preserving quasi-interpolation scheme for each component.
\subsection{\textit{Quasi-interpolation for vector-valued functions with prescribed physical structures}}
   The literature offers three main perspectives on quasi-interpolation: the quasi-Lagrange perspective \cite{BuhmannBook}, the moving least squares perspective \cite{Fasshauer1}, and the semi-discrete convolution perspective \cite{GeneralizedWu}. The last of these regards a quasi-interpolant as the discretization of a convolution sequence obtained by convolving the target function with a delta-like sequence. Its construction thus splits into two ingredients: the delta-like sequence and the quadrature rule. This viewpoint also decomposes the approximation error naturally into an approximation term and a discretization term, mirroring the bias--variance decomposition of statistical learning theory, and it makes the optimality and regularization properties of quasi-interpolation with a tunable scale parameter accessible \cite{Gaoetal2}. We adopt this perspective here, and begin by constructing matrix-valued kernels whose columns \textbf{analytically} satisfy the physical constraints of the target function.
\subsubsection{\textit{ Matrix-valued kernels with physical constraints}}
We first introduce an operator defined through the generalized Fourier transform: \begin{equation}\label{proop}
\mathbb{P}(\bm \omega)=I_{m\times m}-P(-i\bm{\omega})G^\dagger(-i\bm{\omega})P^*(-i\bm{\omega}).
 \end{equation}
Then we have the following proposition characterizing properties of $\mathbb{P}$.
\begin{proposition}\label{propofp}
Let $G^\dagger(-i\bm\omega)$ be the Moore--Penrose pseudo-inverse of $G(-i\bm\omega):=P^*(-i\bm\omega)P(-i\bm\omega)$. Denote by $V=\text{Ran} P$  the range of the operator $P$. Then $\mathbb{P}(\bm\omega)$ defined in Equation \eqref{proop} is an orthogonal projector onto  the orthogonal complement $V^{\perp}$ of $V$. In particular, $\mathbb{P}(\bm\omega)$ is a contraction,
\begin{equation}\label{contraction}
\|\mathbb{P}(\bm\omega)\bold v\|\leq \|\bold v\|\quad \text{for all}\ \bold v\in \mathbb{C}^m\ \text{and all}\ \bm\omega\neq \bold 0.
\end{equation}
Moreover, the annihilation $P_l(i\bm\omega)\cdot\mathbb{P}(\bm\omega) = \bold 0_m^T$  holds for all $1\leq l\leq n$.
\end{proposition}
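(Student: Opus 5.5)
Fix $\bm\omega\neq\bold 0$ and write $A:=P(-i\bm\omega)\in\mathbb{C}^{m\times n}$. By the definition in the statement, $P^*(-i\bm\omega)=A^*$ is the Hermitian conjugate of $A$, and $G=A^*A$. The plan is to show that $AG^\dagger A^*=A(A^*A)^\dagger A^*$ equals $AA^\dagger$. It is standard that $AA^\dagger$ is the orthogonal projector onto $V=\operatorname{Ran}A$. Then $\mathbb{P}(\bm\omega)=I-AA^\dagger$ is the orthogonal projector onto $V^\perp$.

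\textbf{Step 1: the pseudo-inverse identity.} Take the singular value decomposition $A=U\Sigma W^*$. Then $A^*A=W\Sigma^*\Sigma W^*$, so $(A^*A)^\dagger=W(\Sigma^*\Sigma)^\dagger W^*$. This gives
$$A(A^*A)^\dagger A^*=U\Sigma(\Sigma^*\Sigma)^\dagger\Sigma^*U^*=U\,\mathrm{diag}(1,\dots,1,0,\dots,0)\,U^*,$$
where the number of ones is $\operatorname{rank}A$. The right-hand side is exactly the orthogonal projector onto the span of the first $\operatorname{rank}A$ columns of $U$, which is $\operatorname{Ran}A$. Equivalently, one can check directly that $\Pi:=AG^\dagger A^*$ is Hermitian, since $G^\dagger$ is Hermitian. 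One can also check $\Pi^2=\Pi$, using $G^\dagger GG^\dagger=G^\dagger$. Finally, $\Pi A=A$: the map $A(I-G^\dagger G)$ vanishes because $I-G^\dagger G$ projects onto $\ker G=\ker A$.

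\textbf{Step 2: projector onto $V^\perp$ and contraction.} Step 1 gives $\mathbb{P}=I-\Pi$. This matrix is Hermitian and idempotent. Its range is $(\operatorname{Ran}\Pi)^\perp=V^\perp$. The contraction bound \eqref{contraction} then follows from Pythagoras: $\|\bold v\|^2=\|\mathbb{P}\bold v\|^2+\|(I-\mathbb{P})\bold v\|^2$.

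\textbf{Step 3: annihilation.} Since $\mathbb{P}$ is Hermitian and $\mathbb{P}A=A-\Pi A=0$, we get $A^*\mathbb{P}=(\mathbb{P}A)^*=0$. The $l$-th row of $A^*$ is $\overline{P_l(-i\bm\omega)}^T=P_l^T(i\bm\omega)$, because the coefficients $\bm\gamma_l^\alpha$ are real, as is implicit in the definition $P^*(-i\bm\omega)=P^T(i\bm\omega)$. Hence $P_l(i\bm\omega)\cdot\mathbb{P}(\bm\omega)=\bold 0_m^T$ for every $l$.

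The only delicate point is Step 1, namely handling the singular Gram matrix via the pseudo-inverse rather than $G^{-1}$. The SVD argument settles it uniformly, including at frequencies where the rank of $A$ drops.
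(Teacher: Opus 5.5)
Your proof is correct and follows essentially the paper's route. Your direct check that $\Pi=AG^\dagger A^*$ is Hermitian and idempotent uses the same Penrose identity $G^\dagger GG^\dagger=G^\dagger$, with $\ker G=\ker A$ in the role of the paper's $\operatorname{Ran}G=\operatorname{Ran}P^*$. Your annihilation step $A^*\mathbb{P}=(\mathbb{P}A)^*=0$ is the adjoint of the paper's identity $GG^\dagger P^*=P^*$. The SVD computation and the Pythagoras bound only repackage the paper's range argument and its Cauchy--Schwarz step.

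Your remark that reading the rows of $P^*(-i\bm\omega)$ as $P_l^T(i\bm\omega)$ requires real coefficients $\bm\gamma_l^{\alpha}$ makes explicit an assumption the paper leaves implicit in its definition of $P^*$.
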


\begin{proof}
 We first show that the matrix $PG^\dagger P^*$ is Hermitian and idempotent. Using the Penrose identity $G^\dagger GG^\dagger= G^\dagger$, we have
\[
(PG^\dagger P^*)^2 = P\,\underbrace{G^\dagger P^*P\,G^\dagger}_{=\,G^\dagger GG^\dagger \,=\, G^\dagger}\,P^* = PG^\dagger P^*.
\]
 Thus $PG^\dagger P^*$ is idempotent.  Hermiticity follows directly from construction, as $G=P^*P$ is Hermitian and the Moore--Penrose pseudo-inverse of a Hermitian matrix remains Hermitian. Moreover, we have  $\text{Ran}(PG^\dagger P^*) = \text{Ran}P$ since $G^\dagger$ maps $\text{Ran}P^* = \text{Ran}G$ onto itself bijectively. Hence $PG^\dagger P^*$ is the orthogonal projector onto $V$ and accordingly $\mathbb{P} = I- PG^\dagger P^*$ is the one  onto the orthogonal complement $V^\perp$. We now verify the annihilation property. Recall $P_l^T(i\bm\omega) = \bold e_l^TP^*(-i\bm\omega)$. Substitute it into $\bold e_l^TP^*\mathbb{P}$:
\[
\bold e_l^TP^*\mathbb{P} = \bold e_l^T\big(P^* - P^*PG^\dagger P^*\big) = \bold e_l^T\big(P^* - GG^\dagger P^*\big) = \bold e_l^T\big(P^* - P^*\big) = \bold 0_m^T,
\]
where $GG^\dagger P^* = P^*$ holds because $GG^\dagger$ is the orthogonal projector onto $\text{Ran}G = \text{Ran}P^*$. Finally, \eqref{contraction} is immediate: being Hermitian and idempotent, $\mathbb{P}$ satisfies $\|\mathbb{P}\bold v\|^2=\bold v^*\mathbb{P}^*\mathbb{P}\bold v=\bold v^*\mathbb{P}\bold v\leq \|\bold v\|\,\|\mathbb{P}\bold v\|$ by the Cauchy--Schwarz inequality, whence $\|\mathbb{P}\bold v\|\leq \|\bold v\|$.
\end{proof}
 With the projector $\mathbb{P}$ and the spline $\psi_{\ell,k,h}$ at hand, we characterize our matrix-valued kernel in the frequency domain by $\widehat{\mathbb{K}}_{\ell,k,m,n,h}(\bm{\omega})=\mathbb{P}(\bm{\omega})\widehat{\psi}_{\ell,k,h}(\bm{\omega})$. Taking the inverse Fourier transform of both sides gives \begin{equation}\label{matrixkernel}
 \mathbb{K}_{\ell,k,m,n,h}(\bold x):=I_{m\times m}\psi_{\ell,k,h}(\bold x) -(P(D)\Phi_n)* (P^T(-D)\psi_{\ell,k,h})(\bold x), \ \bold x\in \R^d,
  \end{equation}
 where the $n\times n$ matrix-valued function $\Phi_n$ is defined in the distributional sense by the requirement \begin{equation}\label{defofphin}
 P(-i\bm\omega)\widehat{\Phi}_n(\bm\omega)P^*(-i\bm\omega)=P(-i\bm{\omega})G^\dagger(-i\bm{\omega})P^*(-i\bm{\omega}), \quad \bm\omega\neq \bold 0.
 \end{equation}
 We stress that \eqref{defofphin} constrains $\widehat{\Phi}_n$ only through the above sandwich product, and hence leaves $\widehat{\Phi}_n$ free on the kernel of $P(-i\bm\omega)$. The obvious choice $\widehat{\Phi}_n=G^\dagger$ is admissible, but simpler ones are often available, as the examples of Section $4$ illustrate. We next verify that $\mathbb{K}_{\ell,k,m,n,h}$ \textbf{analytically} satisfies the dot-product constraints \eqref{physicalinvariants} column-wise.
  \begin{lemma}\label{physicalconstraintsofkernels}
Let $\Phi_n$ satisfy \eqref{defofphin} and let $\mathbb{K}_{\ell,k,m,n,h}$ be defined in Equation \eqref{matrixkernel}. Then the column-wise dot product satisfies
$P_l(-D)\cdot\mathbb{K}_{\ell,k,m,n,h}=P_l^T(-D)\mathbb{K}_{\ell,k,m,n,h}=\bold 0_m$ for all $1\leq l\leq n$.
\end{lemma}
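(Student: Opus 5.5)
We will work in the frequency domain and reduce the claim to the annihilation property of Proposition \ref{propofp}. Since $P_l(-D)=\sum_{|\alpha|\le r_l}\bm\gamma_l^{\alpha}(-D)^{\alpha}$ has constant coefficients, the Fourier transform turns $D^{\alpha}$ into multiplication by $(i\bm\omega)^{\alpha}$. Hence $(-D)^{\alpha}$ becomes multiplication by $(-i\bm\omega)^{\alpha}$. Applied column-wise to the matrix-valued kernel, this gives
\begin{equation*}
\mathfrak{F}\big(P_l^T(-D)\mathbb{K}_{\ell,k,m,n,h}\big)(\bm\omega)=P_l^T(-i\bm\omega)\,\widehat{\mathbb{K}}_{\ell,k,m,n,h}(\bm\omega),
\end{equation*}
to be understood as an identity of tempered distributions.

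The first step is to confirm that the spatial formula \eqref{matrixkernel} indeed has Fourier transform $\mathbb{P}(\bm\omega)\widehat{\psi}_{\ell,k,h}(\bm\omega)$. Transforming the convolution term gives $P(i\bm\omega)\widehat{\Phi}_n(\bm\omega)P^T(-i\bm\omega)\widehat{\psi}_{\ell,k,h}(\bm\omega)$. By \eqref{defofphin}, this equals $P G^\dagger P^*\,\widehat{\psi}_{\ell,k,h}$ for $\bm\omega\neq\bold 0$. Here I will check carefully that the sign conventions of $P(D)$ versus $P(-i\bm\omega)$ in the paper match. If they do not, I will pass to the complex conjugate and use the Hermitian structure, since the projector built from $P(i\bm\omega)$ is just the conjugate of $\mathbb{P}(\bm\omega)$.

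The second step applies the annihilation identity. Proposition \ref{propofp} gives $P_l(i\bm\omega)\cdot\mathbb{P}(\bm\omega)=\bold 0_m^T$, that is, $\bold e_l^TP^*(-i\bm\omega)\mathbb{P}(\bm\omega)=0$. Multiplying by the scalar $\widehat{\psi}_{\ell,k,h}$ then shows that the transformed symbol vanishes for every $\bm\omega\neq\bold 0$, after again matching the symbol $P_l^T(-i\bm\omega)$ with $\bold e_l^TP^*(-i\bm\omega)=P_l^T(i\bm\omega)$ via the conjugation convention fixed in the first step.

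The main obstacle is the origin, where $\mathbb{P}$ may be discontinuous and $\widehat{\Phi}_n$ may be singular. The plan is to show that the distribution $P_l^T(-i\bm\omega)\mathbb{P}(\bm\omega)\widehat{\psi}_{\ell,k,h}(\bm\omega)$ has no part supported at $\bold 0$. Its entries are bounded: $\mathbb{P}$ is a contraction by \eqref{contraction}, and $\widehat{\psi}_{\ell,k,h}$ is continuous by \eqref{Strangfix}. So $\widehat{\mathbb{K}}$ is a locally integrable function. Multiplied by the polynomial $P_l^T(-i\bm\omega)$, it remains a locally integrable function that vanishes almost everywhere. A locally integrable function cannot carry a Dirac component, so the distribution is zero. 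Injectivity of the Fourier transform on tempered distributions then gives $P_l^T(-D)\mathbb{K}_{\ell,k,m,n,h}=\bold 0_m$. For this to work, $\mathbb{K}$ must be defined as the inverse transform of the bounded symbol $\mathbb{P}\widehat{\psi}_{\ell,k,h}$, which \eqref{defofphin} is designed to guarantee.
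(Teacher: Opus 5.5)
Your proposal is correct and is essentially the paper's proof: identify $\widehat{\mathbb{K}}_{\ell,k,m,n,h}$ with $\mathbb{P}\,\widehat{\psi}_{\ell,k,h}$ via \eqref{defofphin}, annihilate it with the identity $P_l(i\bm\omega)\cdot\mathbb{P}(\bm\omega)=\mathbf{0}_m^T$ from Proposition \ref{propofp}, and invert the Fourier transform. Your two extra checks are refinements the paper skips. First, the sign mismatch you anticipate is real: the stated transform with $e^{-i\bm\omega\cdot\mathbf{x}}$ sends $D\mapsto i\bm\omega$, whereas \eqref{proop}, \eqref{defofphin} and the paper's one-line proof are literally consistent only under $D\mapsto -i\bm\omega$; for real coefficients and an even $\Phi_n$ (as in all of the paper's examples), your reflection $\overline{\mathbb{P}(\bm\omega)}=\mathbb{P}(-\bm\omega)$ repairs it. Second, your $L^\infty$ argument at the origin is sound, although what excludes origin-supported terms is the frequency-domain definition $\widehat{\mathbb{K}}_{\ell,k,m,n,h}=\mathbb{P}\,\widehat{\psi}_{\ell,k,h}$ rather than \eqref{defofphin}, which constrains $\widehat{\Phi}_n$ only for $\bm\omega\neq\mathbf{0}$.
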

\begin{proof}
By \eqref{defofphin}, the Fourier transform of \eqref{matrixkernel} is $\widehat{\mathbb{K}}_{\ell,k,m,n,h}(\bm{\omega})=\mathbb{P}(\bm \omega)\widehat{\psi}_{\ell,k,h}(\bm{\omega})$. Proposition \ref{propofp} then gives $P_l(i\bm{\omega})\cdot \widehat{\mathbb{K}}_{\ell,k,m,n,h}(\bm{\omega})=\big(P_l(i\bm{\omega})\cdot\mathbb{P}(\bm{\omega})\big)\widehat{\psi}_{\ell,k,h}(\bm{\omega})=\bold 0_m$ for all $1\leq l\leq n$. Taking inverse Fourier transforms yields $P_l(-D)\cdot\mathbb{K}_{\ell,k,m,n,h}=P_l^T(-D)\mathbb{K}_{\ell,k,m,n,h}=\bold 0_m$ for all $1\leq l\leq n$.
\end{proof}
\begin{remark}
The projector $\mathbb{P}$ is the core of our construction. In principle, any function of the form $\mathbb{P}(\bm{\omega})\Psi(\bm\omega)$ in the frequency domain yields a matrix-valued kernel satisfying the dot-product constraints \eqref{physicalinvariants}, for a suitably chosen $\Psi$. We take $\Psi=\widehat{\psi}_{\ell,k,h}$ as a concrete choice because of its favourable properties and popularity.
\end{remark}
We next form the convolution sequence $$\mathbb{K}_{\ell,k,m,n,h}*(D^{\alpha}\bold u_m)(\bold x):=\int_{\R^d}\mathbb{K}_{\ell,k,m,n,h}(\bold x-\bold t)D^{\alpha}\bold u_m(\bold t)d\bold t$$ and derive its  error estimates.

 \begin{lemma}\label{matrixconvolution}
Let $\mathbb{K}_{\ell,k,m,n,h}*(D^{\alpha}\bold u_m)$ be defined as above.  Then, for any $ \bold u_m \in (W^r_p (\R^d))^m$ satisfying physical constraints  \eqref{physicalinvariants},  we have    error estimates
\begin{equation}\label{matxconvoerr}
\|\mathbb{K}_{\ell,k,m,n,h}*(D^{\alpha}\bold u_m)-D^{\alpha}\bold u_m\|_p = \mathcal{O}(h^{2k})
\end{equation}   for any $0\leq |\alpha|<\min\{2k, r-2k\}$.
 \end{lemma}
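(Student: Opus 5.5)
The plan is to show that, on constrained fields, the matrix-valued convolution reduces to the scalar one componentwise. Once that is done, Lemma \ref{convolution} gives the rate directly.

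The reduction goes through the Fourier side. By \eqref{matrixkernel} and \eqref{defofphin}, $\widehat{\mathbb{K}}_{\ell,k,m,n,h}(\bm\omega)=\mathbb{P}(\bm\omega)\widehat{\psi}_{\ell,k,h}(\bm\omega)$ for $\bm\omega\neq\bold 0$. Since $\mathbb{P}=I-PG^\dagger P^*$, we have
\[
\mathfrak{F}\big(\mathbb{K}_{\ell,k,m,n,h}*(D^{\alpha}\bold u_m)\big)(\bm\omega)=\widehat{\psi}_{\ell,k,h}(\bm\omega)\Big[\widehat{D^{\alpha}\bold u_m}(\bm\omega)-P(-i\bm\omega)G^\dagger(-i\bm\omega)P^*(-i\bm\omega)\widehat{D^{\alpha}\bold u_m}(\bm\omega)\Big].
\]
The $l$th entry of $P^*(-i\bm\omega)\widehat{D^\alpha\bold u_m}(\bm\omega)$ is $P_l^T(i\bm\omega)\widehat{D^\alpha\bold u_m}(\bm\omega)$. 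Under the paper's Fourier convention this is exactly $\mathfrak{F}\big(P_l(-D)\cdot D^{\alpha}\bold u_m\big)=\mathfrak{F}\big(D^{\alpha}(P_l(-D)\cdot\bold u_m)\big)$, using that constant-coefficient operators commute. This vanishes by the constraints \eqref{physicalinvariants}, so $P^*\widehat{D^\alpha\bold u_m}=\bold 0$ for $\bm\omega\neq\bold 0$ and the second term disappears. Hence $\mathbb{K}_{\ell,k,m,n,h}*(D^{\alpha}\bold u_m)=\psi_{\ell,k,h}*(D^{\alpha}\bold u_m)$ componentwise; equivalently, $(P(D)\Phi_n)*(P^T(-D)\psi_{\ell,k,h})*D^\alpha\bold u_m=(P(D)\Phi_n)*\psi_{\ell,k,h}*\big(P^T(-D)D^\alpha\bold u_m\big)=\bold 0$ in the spatial form.

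With this identity, the norm $\|\cdot\|_p$ on vector-valued functions is a sum of componentwise norms. Lemma \ref{convolution} applied to each $u_j\in W^r_p(\R^d)$ gives $\|\psi_{\ell,k,h}*D^\alpha u_j-D^\alpha u_j\|_p=\mathcal O(h^{2k})$ for $|\alpha|<\min\{2k,r-2k\}$. Summing over $j=1,\dots,m$ yields \eqref{matxconvoerr}.

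The main obstacle is justifying the Fourier-side manipulation rigorously. The symbol $\mathbb{P}$ may be discontinuous at $\bm\omega=\bold 0$, and $\Phi_n$ is only a distribution, so the products and convolutions have to be meaningful. I would handle this in two steps. First, $\mathbb{P}$ is bounded, being a contraction by \eqref{contraction}, and measurable. Second, $\widehat{\psi}_{\ell,k,h}$ decays like $\|h\bm\omega\|^{-2\ell}$ times a bounded factor and is $C^{2k}$. So for $u_j$ in the Schwartz class the identity holds pointwise almost everywhere in frequency, and a single point $\bm\omega=\bold 0$ is a null set that does not affect tempered distributions paired with $L^1$-type symbols. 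A density argument in $W^r_p$ then extends the result, using that $\mathbb{K}_{\ell,k,m,n,h}$ has integrable entries via the decay in Lemma \ref{polyharmonicsplinepropert}. Alternatively, one can avoid $\mathbb{P}$ at the origin entirely by using the spatial-form argument and only the associativity and commutativity of convolution with differentiation, which are legitimate since $\psi_{\ell,k,h}$ and its derivatives of order $<2k$ decay like $\|\bold x\|^{-d-2k+|\alpha|}$.
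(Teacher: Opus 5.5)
Your argument is essentially the paper's. Both show that the constraint annihilates the correction term, so that $\mathbb{K}_{\ell,k,m,n,h}*(D^{\alpha}\mathbf{u}_m)=\psi_{\ell,k,h}*(D^{\alpha}\mathbf{u}_m)$ componentwise, and then apply Lemma~\ref{convolution}. The paper does this in the spatial domain by moving $P^T(-D)$ onto $D^{\alpha}\mathbf{u}_m$ in \eqref{interchange}. You do it on the Fourier side via $P^*(-i\bm\omega)\widehat{D^{\alpha}\mathbf{u}_m}=\mathbf{0}$, i.e.\ $\mathbb{P}\widehat{D^{\alpha}\mathbf{u}_m}=\widehat{D^{\alpha}\mathbf{u}_m}$, which is the same device the paper uses in Lemma~\ref{dotprojectionconvolutionerror} and Theorem~\ref{errorestimateforderiLp}.

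One claim in your justification paragraph is wrong, however. Lemma~\ref{polyharmonicsplinepropert} gives decay only for $\psi_{\ell,k,h}$. Whenever $\mathbb{P}$ is homogeneous of degree zero (e.g.\ the divergence-free case), the entries of $\mathbb{K}_{\ell,k,m,n,h}$, and of $(P(D)\Phi_n)*(P^T(-D)\psi_{\ell,k,h})$, decay only like $\|\mathbf{x}\|^{-d}$ and are not integrable, as Remark~\ref{L1obstruction} shows. So neither your density step (Young's inequality with $\mathbb{K}_{\ell,k,m,n,h}\in L^1$) nor your spatial-form alternative (associativity justified by the decay of $\psi_{\ell,k,h}$ alone) holds as stated. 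For $1<p<\infty$ you can instead write $\mathbb{K}_{\ell,k,m,n,h}*\mathbf{v}=T_{\mathbb{P}}(\psi_{\ell,k,h}*\mathbf{v})$ and use the Mikhlin bound as in Theorem~\ref{errorestimateforderiLp}. The endpoints $p=1$ and $p=\infty$ need separate care, and the paper's own interchange \eqref{interchange} is equally formal on this point.
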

\begin{proof}
 Setting $\bold v_m^{\alpha}=D^{\alpha}\bold u_m$,  we have $P_l(-D)\cdot\bold v_m^{\alpha}=0$ for any $1\leq l\leq n$ and any $0\leq |\alpha|\leq \min\{2k, r-2k\}$. Moreover, we can rewrite the convolution sequence as
$$\mathbb{K}_{\ell,k,m,n,h}*(D^{\alpha}\bold u_m)(\bold x)= (I_{m\times m}\psi_{\ell,k,h})*\bold v_m^{\alpha}(\bold x)-(P(D)\Phi_n)*(P^T(-D)\psi_{\ell,k,h})*\bold v_m^{\alpha}(\bold x).$$
This together with the observation that
\begin{equation}\label{interchange}(P(D)\Phi_n)*(P^T(-D)\psi_{\ell,k,h})*\bold v_m^{\alpha}(\bold x)=(P(D)\Phi_n)*\psi_{\ell,k,h}*(P^T(-D)\bold v_m^{\alpha})(\bold x)=(P(D)\Phi_n)*\psi_{\ell,k,h}*\bold 0_n=\bold 0_m
\end{equation}
leads to
\begin{equation*}
\begin{split}\mathbb{K}_{\ell,k,m,n,h}* \bold v_m^{\alpha}(\bold x)
&=I_{m\times m}\psi_{\ell,k,h}*\bold v_m^{\alpha}(\bold x)\\
&=(\psi_{\ell,k,h}*(D^{\alpha}u_1)(\bold x),\cdots,\psi_{\ell,k,h}*(D^{\alpha}u_m)(\bold x))^T\\
&=\psi_{\ell,k,h}*\bold v_m^{\alpha}(\bold x).
\end{split}
\end{equation*}
Therefore, we have $$\|\mathbb{K}_{\ell,k,m,n,h}*(D^{\alpha}\bold u_m)-D^{\alpha}\bold u_m\|_p =\|\psi_{\ell,k,h}*\bold v_m^{\alpha}-\bold v_m^{\alpha}\|_p:=\sum_{j=1}^m\|\psi_{\ell,k,h}*(D^{\alpha}u_j)-(D^{\alpha}u_j)\|_p,$$
which together with Lemma \ref{convolution} yields Equation \eqref{matxconvoerr}.
\end{proof}
\begin{remark}\label{remarkonscale}
Our construction of $h$-embedded level-$k\ell$-type polyharmonic spline $\psi_{\ell,k,h}$ is what makes Equation \eqref{interchange} legitimate. Because the scale parameter $h$ is built into $\psi_{\ell,k,h}$ itself, the expression $\mathbb{K}_{\ell,k,m,n,h}*\bold v_m^{\alpha}$ is a genuine convolution of undilated factors, so that the constant-coefficient operator $P^T(-D)$ may be transferred from one factor to the other according to the rule $(L g)*w=g*(Lw)$ for a linear operator $L$. If instead $\psi_{\ell,k,h}(\bold x)$ were replaced by $h^{-d}\psi_{\ell,k}(\bold x/h)$ in the convolution sequence, then \eqref{interchange} would fail: the operator $P^T(-D)$ cannot be moved from the kernel $h^{-d}\psi_{\ell,k}((\bold x-\bold t)/h)$ onto $\bold v_m^{\alpha}$, because the dilation $(\bold x-\bold t)/h$ couples the differentiation variable to $\bold t$ and produces uncontrolled factors $h^{-1}$ under the chain rule.
\end{remark}
 In practice only discrete function values at sampling centers are available, so the convolution sequence must be discretized to obtain a semi-discrete counterpart, that is, a quasi-interpolant.
\subsubsection{\textit{Structure-preserving   quasi-interpolation}}
For the purposes of the theoretical analysis we focus on quasi-interpolation in Schoenberg's form, based on discrete function values at equidistant sampling centers throughout $\R^d$. Low-discrepancy centers \cite{Gaoetal2}, random centers \cite{Gaoetal3}, and other choices \cite{Krieg} may be used instead. Over a bounded domain $\Omega\subset \R^d$ one must address the well-known boundary effect: applying a quasi-interpolant designed for the whole space directly to data sampled from a bounded domain gives poor approximation near the boundary. A common remedy is boundary extension, see for instance \cite{EPSTEIN}, \cite{ZMRS}; more recently, the authors of \cite{Gaoetal4} proposed a Boolean sum of interpolation near the boundary and quasi-interpolation in the interior. We leave such a topic for future work.

 Given sampling data $\{(\bold jh,\bold u_m(\bold jh))\}_{\bold j\in \mathbb{Z}^d}$  with equidistant sampling centers $\{\bold jh\}_{\bold j\in \mathbb{Z}^d}$ over the whole space $\R^d$, we  first construct an ansatz
\begin{equation}\label{quasiinterpolationofourpaper}
Q_{\mathbb{K}_{\ell,k,m,n,h}}\bold u_m(\bold x):=h^d\sum_{\bold j\in \mathbb{Z}^d}\mathbb{K}_{\ell,k,m,n,h}(\bold x-\bold jh)\bold u_m(\bold jh),\ \bold x\in \R^d,
\end{equation}
following Schoenberg's model \cite{Schoenberg}. Then show that $Q_{\mathbb{K}_{\ell,k,m,n,h}}\bold u_m$ \textbf{exactly} preserves the physical constraints \eqref{physicalinvariants}. Applying the vector differential operator $P_l(-D)$ column-wise to $Q_{\mathbb{K}_{\ell,k,m,n,h}}\bold u_m$ gives
$$P_l(-D) \cdot\big(Q_{\mathbb{K}_{\ell,k,m,n,h}}\bold u_m(\bold x)\big)=h^d\sum_{\bold j\in \mathbb{Z}^d}\big(P_l(-D) \cdot \mathbb{K}_{\ell,k,m,n,h}(\bold x-\bold jh)\big) \bold u_m(\bold jh),$$ which together with Lemma \ref{physicalconstraintsofkernels} yields
$$P_l(-D) \cdot\big(Q_{\mathbb{K}_{\ell,k,m,n,h}}\bold u_m(\bold x)\big)=h^d\sum_{\bold j\in \mathbb{Z}^d}\bold 0_{m}^T \bold u_m(\bold jh)=0, \qquad 1\leq l\leq n.$$
It remains to show that $Q_{\mathbb{K}_{\ell,k,m,n,h}}\bold u_m$ is indeed a quasi-interpolant  by deriving its simultaneous error estimates. Throughout, we set $\bold v_m^{\alpha}:=D^{\alpha}\bold u_m$ and write $\mathbb{K}:=\mathbb{K}_{\ell,k,m,n,h}$, $Q_{\mathbb{K}}:=Q_{\mathbb{K}_{\ell,k,m,n,h}}$ for brevity.
\begin{theorem}\label{errorestimateforderi}
Let  $\bold u_m\in (W_p^r(\R^d))^m$ be a vector-valued function having physical structures characterized via dot-product constraints \eqref{physicalinvariants}. Let $Q_{\mathbb{K}}\bold u_m$ be defined  in Equation \eqref{quasiinterpolationofourpaper} with $\mathbb{K}$ being constructed in Equation \eqref{matrixkernel}.   Then, for $p=2$ and $p=\infty$, we have simultaneous error estimates
$$\|D^{\alpha}(Q_{\mathbb{K}}\bold u_m)-D^{\alpha}\bold u_m\|_{p}=\mathcal{O}(h^{2k-|\alpha|})$$  for  any $0\leq |\alpha|< \min\{2k, r-2k,2\ell-d-1\}$ with $r-2k>d/2$.
\end{theorem}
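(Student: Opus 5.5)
The plan is the bias--variance split suggested by the semi-discrete convolution viewpoint:
\[
D^{\alpha}(Q_{\mathbb{K}}\bold u_m)-D^{\alpha}\bold u_m=\big(D^{\alpha}(Q_{\mathbb{K}}\bold u_m)-\mathbb{K}*\bold v_m^{\alpha}\big)+\big(\mathbb{K}*\bold v_m^{\alpha}-\bold v_m^{\alpha}\big).
\]
The second (bias) term is $\mathcal{O}(h^{2k})$ in $L^p$ directly by Lemma \ref{matrixconvolution}, since $|\alpha|<\min\{2k,r-2k\}$. The real work is the first (variance/discretization) term. We will bound it by $\mathcal{O}(h^{2k-|\alpha|})$ by reducing it to the scalar quasi-interpolation estimate \eqref{simultaneousofpol}, in the same way Lemma \ref{matrixconvolution} reduced the convolution to the scalar Lemma \ref{convolution}.

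First we work in the frequency domain. By Poisson summation, the Fourier transform of $Q_{\mathbb{K}}\bold u_m$ is
\[
\widehat{\mathbb{K}}(\bm\omega)\sum_{\bold j}\widehat{\bold u}_m(\bm\omega-2\pi\bold j/h)=\mathbb{P}(\bm\omega)\widehat{\psi}_{\ell,k,h}(\bm\omega)\sum_{\bold j}\widehat{\bold u}_m(\bm\omega-2\pi\bold j/h).
\]
We then split this sum into the $\bold j=\bold 0$ term and the aliasing terms $\bold j\neq\bold 0$.

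The $\bold j=\bold 0$ term is $\mathbb{P}\widehat{\psi}_{\ell,k,h}\widehat{\bold u}_m$. Because $\bold u_m$ satisfies \eqref{physicalinvariants}, $\widehat{\bold u}_m(\bm\omega)$ lies in $V^\perp$ (the annihilation in Proposition \ref{propofp} read in reverse: $P^*(-i\bm\omega)\widehat{\bold u}_m=0$). So $\mathbb{P}\widehat{\bold u}_m=\widehat{\bold u}_m$, and this term is exactly $\psi_{\ell,k,h}*\bold u_m$, which is the frequency version of \eqref{interchange}.

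For the aliasing terms, the shifted spectra $\widehat{\bold u}_m(\cdot-2\pi\bold j/h)$ need not lie in $V^\perp(\bm\omega)$. Here we use the contraction \eqref{contraction} pointwise: $|\mathbb{P}(\bm\omega)\bold w|\le|\bold w|$. Multiplying by $(i\bm\omega)^\alpha$, the aliasing part of $D^\alpha Q_{\mathbb{K}}\bold u_m$ is therefore dominated componentwise in modulus by
\[
|\bm\omega^\alpha\widehat{\psi}_{\ell,k,h}(\bm\omega)|\,\Big|\sum_{\bold j\neq \bold 0}\widehat{\bold u}_m(\bm\omega-2\pi\bold j/h)\Big|,
\]
which is exactly the aliasing quantity in the scalar Schoenberg analysis.

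For $p=2$ we conclude by Plancherel: the $L^2$ norm of the aliasing part is bounded by the corresponding scalar quantity, which the argument behind \eqref{simultaneousofpol} (Strang--Fix conditions \eqref{Strangfix} together with $r-2k>d/2$) shows is $\mathcal{O}(h^{2k-|\alpha|})$. Combining this with the bias term $\|\psi_{\ell,k,h}*\bold v_m^\alpha-\bold v_m^\alpha\|_2$ finishes the case $p=2$.

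For $p=\infty$, Plancherel is unavailable, so we bound the sup norm by the $L^1$ norm of the Fourier transform, using $r-2k>d/2$ to keep the weighted spectra integrable. The main obstacle is this step: justifying that the scalar aliasing bound behind \eqref{simultaneousofpol} is really proved through $L^1$ control of the Fourier side, so that the pointwise contraction transfers it. A further difficulty is that $\mathbb{P}$ is only bounded and possibly discontinuous at $\bold 0$ and at rank changes. If the $L^\infty$ transfer fails, the fallback is to write $\mathbb{K}=I\psi_{\ell,k,h}-\mathbb{J}$ and show that $h^d\sum_{\bold j}\mathbb{J}(\cdot-\bold jh)\bold u_m(\bold jh)$ has Fourier transform supported only on aliased frequencies, then estimate it directly with the decay $\mathcal{O}(\|\bold x\|^{-d-2k+|\alpha|})$ from Lemma \ref{polyharmonicsplinepropert}.
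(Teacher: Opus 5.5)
\paragraph{Verdict.} Your plan matches the paper and is fine for $p=2$, but the $p=\infty$ case, which you yourself call the ``main obstacle,'' is not proved, and your fallback for it would fail.

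\paragraph{What matches the paper.} The bias--variance split, the Poisson-summation form $\widehat{E_1}(\bm\omega)=(i\bm\omega)^{\alpha}\mathbb{P}(\bm\omega)\widehat{\psi}_{\ell,k,h}(\bm\omega)\sum_{\mathbf{j}\neq\mathbf{0}}\widehat{\mathbf{u}_m}(\bm\omega-2\pi\mathbf{j}/h)$, and discarding $\mathbb{P}$ via \eqref{contraction} are all as in the paper. The $\mathbf{j}=\mathbf{0}$ term is $\mathbb{K}*\mathbf{v}_m^{\alpha}$ because $\mathbb{P}\widehat{\mathbf{u}_m}=\widehat{\mathbf{u}_m}$.

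\paragraph{The case $p=2$.} Your route here is correct and somewhat slicker than the paper's. Let $E_1^{\mathrm{sc}}=D^{\alpha}Q_{\ell,h}\mathbf{u}_m-\psi_{\ell,k,h}*D^{\alpha}\mathbf{u}_m$. Then $\widehat{E_1}=\mathbb{P}\,\widehat{E_1^{\mathrm{sc}}}$, and Plancherel gives $\|E_1\|_2\le\sqrt{m}\,\|E_1^{\mathrm{sc}}\|_2=\mathcal{O}(h^{2k-|\alpha|})$ by \eqref{simultaneousofpol} and Lemma \ref{convolution}. The paper instead re-derives the scalar aliasing bound by hand. One correction: the domination is in the Euclidean norm of $\mathbb{C}^m$, not componentwise. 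This costs only the factor $\sqrt{m}$.

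\paragraph{The gap at $p=\infty$.} A pointwise bound on Fourier transforms gives no control of sup norms. So the scalar $L^\infty$ estimate \eqref{simultaneousofpol} cannot serve as a black box. You must prove $\|\widehat{E_1}\|_{L^1}=\mathcal{O}(h^{2k-|\alpha|})$ yourself, and that is the analytic core of the theorem. The paper does it as follows:
\begin{enumerate}
\item Put absolute values inside, apply the triangle inequality over $\mathbf{j}\neq\mathbf{0}$, and shift variables so that $\widehat{\mathbf{u}_m}$ is unshifted.
\item Drop $\mathbb{P}$ by \eqref{contraction}.
\item Rescale using $\widehat{\psi}_{\ell,k,h}(\bm\zeta)=\widehat{\psi_{\ell,k}}(h\bm\zeta)$ and $(\bm\omega+2\pi\mathbf{j}/h)^{\alpha}=h^{-|\alpha|}(h\bm\omega+2\pi\mathbf{j})^{\alpha}$.
\item On $\|h\bm\omega\|\le 1$, Taylor-expand $\widehat{\psi_{\ell,k}}$ about $2\pi\mathbf{j}$. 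All derivatives of order $<2k$ vanish there by \eqref{Strangfix}, which produces the factor $h^{2k}\|\bm\omega\|^{2k}$.
\item On $\|h\bm\omega\|>1$, insert $1<h^{2k}\|\bm\omega\|^{2k}$.
\item Sum over $\mathbf{j}$ using the uniform bound \eqref{rabutsup}.
\end{enumerate}
This gives $\|E_1\|_\infty\le Ch^{2k-|\alpha|}\int\|\bm\omega\|^{2k}|\widehat{\mathbf{u}_m}(\bm\omega)|\,d\bm\omega$. The paper then controls this integral by Cauchy--Schwarz and $r-2k>d/2$.

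\paragraph{Why the fallback fails.} The decay $\mathcal{O}(\|\mathbf{x}\|^{-d-2k+|\alpha|})$ in Lemma \ref{polyharmonicsplinepropert} is a property of $\psi_{\ell,k,h}$, not of $\mathbb{J}:=I_{m\times m}\psi_{\ell,k,h}-\mathbb{K}$. When $\mathbb{P}$ is homogeneous of degree zero (e.g.\ divergence-free), $\widehat{\mathbb{J}}=(I_{m\times m}-\mathbb{P})\widehat{\psi}_{\ell,k,h}$ is discontinuous at the origin, since $\widehat{\psi}_{\ell,k,h}(\mathbf{0})=1$. Hence $\mathbb{J}$ decays only like $\|\mathbf{x}\|^{-d}$, as in Remark \ref{L1obstruction}. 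The spatial lattice-sum estimate you propose therefore breaks down already for $|\alpha|=0$.
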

\begin{proof}
 We decompose the approximation error into a bias and a variance part,
\begin{equation}\label{biasvariance}
D^{\alpha}(Q_{\mathbb{K}}\bold u_m)-D^{\alpha}\bold u_m
=\underbrace{\Big[D^{\alpha}(Q_{\mathbb{K}}\bold u_m)-\mathbb{K}*\bold v_m^{\alpha}\Big]}_{=:E_1\ \text{(variance)}}
+\underbrace{\Big[\mathbb{K}*\bold v_m^{\alpha}-\bold v_m^{\alpha}\Big]}_{=:E_2\ \text{(bias)}} .
\end{equation}
By Lemma \ref{matrixconvolution}, $\|E_2\|_{p}=\mathcal{O}(h^{2k})$ for every $1\leq p\leq \infty$, so it remains to bound $\|E_1\|_{p}$ for $p=2$ and $p=\infty$.

Since $|\alpha|<2k$, Lemma \ref{polyharmonicsplinepropert} gives $D^{\alpha}\mathbb{K}(\bold x)=\mathcal{O}(\|\bold x\|^{-d-2k+|\alpha|})$ with $-d-2k+|\alpha|<-d$, so the lattice sum $\sum_{\bold j}|D^{\alpha}\mathbb{K}(\bold x-\bold jh)|$ converges uniformly in $\bold x$, and $r>d/p$ embeds $\bold u_m$ into a bounded continuous function. Hence the series defining $Q_{\mathbb{K}}\bold u_m$ may be differentiated term by term, and the Poisson summation formula applies to $\bold t\mapsto (D^{\alpha}\mathbb{K})(\bold x-\bold t)\bold u_m(\bold t)$. Its $\bold j=\bold 0$ frequency contribution is exactly $\mathbb{K}*\bold v_m^{\alpha}(\bold x)$, so only the nonzero aliases survive in $E_1$:
\begin{equation}\label{aliasrep}
E_1(\bold x)=(2\pi)^{-d/2}\int_{\R^d}\sum_{\bold j\in \mathbb{Z}^d/\{\bold 0\}}\big(i(\bm{\omega}+2\bold j\pi/h)\big)^{\alpha}\widehat{\mathbb{K}}(\bm{\omega}+2\bold j\pi/h)\,\widehat{\bold u_m}(\bm{\omega})\,e^{i\bold x\cdot\bm{\omega}}e^{2i\bold j\pi\cdot \bold x/h}\,d\bm{\omega}.
\end{equation}

Substituting $\bm\zeta=\bm\omega+2\bold j\pi/h$ shows that the $\bold j$th summand of \eqref{aliasrep} is the inverse Fourier transform of $\bm\zeta\mapsto (i\bm\zeta)^{\alpha}\widehat{\mathbb{K}}(\bm\zeta)\widehat{\bold u_m}(\bm\zeta-2\bold j\pi/h)$. Hence, by the triangle inequality in $L^2$ followed by the Parseval--Plancherel identity and the substitution back to $\bm\omega$, we have
\begin{equation}\label{L2split}
\|E_1\|_{2}\ \leq\ \sum_{\bold j\in \mathbb{Z}^d/\{\bold 0\}}\Big\|(\bm{\omega}+2\bold j\pi/h)^{\alpha}\,\widehat{\mathbb{K}}(\bm{\omega}+2\bold j\pi/h)\,\widehat{\bold u_m}(\bm{\omega})\Big\|_{L^2(d\bm\omega)} .
\end{equation}

Moreover, since $\widehat{\mathbb{K}}=\mathbb{P}\widehat{\psi}_{\ell,k,h}$ and $\mathbb{P}$ is a contraction by \eqref{contraction}, the matrix factor may be discarded pointwise in $\bm\omega$:
$$\big|\widehat{\mathbb{K}}(\bm\zeta)\widehat{\bold u_m}(\bm\omega)\big|\leq \big|\widehat{\psi}_{\ell,k,h}(\bm\zeta)\big|\,\big|\widehat{\bold u_m}(\bm\omega)\big| .$$   Combining this with $\widehat{\psi}_{\ell,k,h}(\bm\omega)=\widehat{\psi_{\ell,k}}(h\bm\omega)$ from \eqref{keyrelation} and with $(\bm{\omega}+2\bold j\pi/h)^{\alpha}=h^{-|\alpha|}(h\bm{\omega}+2\bold j\pi)^{\alpha}$, the right-hand side of \eqref{L2split} is at most
\begin{equation}\label{afterrescale}
h^{-|\alpha|}\sum_{\bold j\in \mathbb{Z}^d/\{\bold 0\}}\Big\|(h\bm{\omega}+2\bold j\pi)^{\alpha}\,\widehat{\psi_{\ell,k}}(h\bm{\omega}+2\bold j\pi)\,\widehat{\bold u_m}(\bm{\omega})\Big\|_{L^2(d\bm\omega)} .
\end{equation}
Furthermore, the Taylor expansion of $\widehat{\psi_{\ell,k}}$ about $2\bold j\pi$ in the increment $h\bm\omega$ produces an intermediate point on the segment joining $\bold 0$ to $h\bm\omega$, and the uniform bound of \cite{Rabut0} recalled in \eqref{rabutsup} below is asserted only for increments of length at most one. We therefore split $\R^d=\Omega_{\text{in}}\cup\Omega_{\text{out}}$ with $\Omega_{\text{in}}:=\{\bm\omega: \|h\bm\omega\|\leq 1\}$ and $\Omega_{\text{out}}:=\{\bm\omega:\|h\bm\omega\|>1\}$, and treat the two regions differently.

\emph{On $\Omega_{\text{in}}$.} Here the increment does lie in the admissible range. Since $D^{\beta}\widehat{\psi_{\ell,k}}(2\bold j\pi)=0$ for $0\leq |\beta|<2k$ and $\bold j\neq \bold 0$ by \eqref{Strangfix}, Taylor's theorem with Lagrange remainder gives
\begin{equation}\label{taylorstep}
\widehat{\psi_{\ell,k}}(h\bm{\omega}+2\bold j\pi)=\frac{h^{2k}\bm{\omega}^{2k}}{(2k)!}\widehat{\psi_{\ell,k}}^{(2k)}(\bm{\xi}_\bold j+2\bold j\pi),\qquad \bm{\xi}_\bold j\in(\bold 0,h\bm{\omega}),\ \|\bm\xi_{\bold j}\|\leq 1 ,
\end{equation}
which extracts the factor $h^{2k}$.

\emph{On $\Omega_{\text{out}}$.} No expansion is performed. Instead we use the elementary observation that $\|h\bm\omega\|>1$ forces $1<h^{2k}\|\bm\omega\|^{2k}$, so that the factor $h^{2k}$ may be inserted for free at the cost of one power $\|\bm\omega\|^{2k}$ under the integral, while $|\widehat{\psi_{\ell,k}}|$ is bounded.

Finally, recall from \cite{Rabut0} that
\begin{equation}\label{rabutsup}
\sup_{\|\bm{\xi}\|\le 1,\,\|\bm \eta\|\le 1}\ \sum_{\bold j\in\mathbb Z^d/\{\bold 0\}}\big|2\pi \bold j+\bm \xi\big|^{|\beta|}\ \big|\widehat{\psi_{\ell,k}}^{(\gamma)}(2\pi \bold j+\bm\eta)\big|\ <\ \infty,\qquad 0\le|\beta|\le 2k-1,\ 0\le|\gamma|\le 2k,
\end{equation}
which applies with $\beta=\alpha$ because $|\alpha|<2k$. Inserting \eqref{taylorstep} into \eqref{afterrescale} on $\Omega_{\text{in}}$, using the fact that $1<h^{2k}\|\bm\omega\|^{2k}$ on $\Omega_{\text{out}}$, and applying \eqref{rabutsup} to interchange the summation over $\bold j$ with the $L^2$ norm, we obtain a constant $C>0$, independent of $h$, with
$$\|E_1\|_{2}\ \leq\ C\,h^{2k-|\alpha|}\,\Big\|\,\|\bm\omega\|^{2k}\widehat{\bold u_m}(\bm\omega)\Big\|_{L^2(d\bm\omega)} .$$
The last norm is finite precisely because $\bold u_m\in (W_2^{r}(\R^d))^m$ with $r-2k>d/2$. Hence $\|E_1\|_{2}=\mathcal{O}(h^{2k-|\alpha|})$, and \eqref{biasvariance} together with $\|E_2\|_2=\mathcal{O}(h^{2k})$ and the triangle inequality yields
$\|D^{\alpha}(Q_{\mathbb{K}}\bold u_m)-D^{\alpha}\bold u_m\|_2=\mathcal{O}(h^{2k-|\alpha|})$.

On the other hand side, if absolute values are taken inside \eqref{aliasrep}, the same line of reasoning yields the bound
$$\|E_1\|_{\infty}\ \leq\ C\,h^{2k-|\alpha|}\int_{\mathbb{R}^d}\|\bm\omega\|^{2k}\big|\widehat{\bold u_m}(\bm\omega)\big|\,d\bm\omega.$$ Moreover, combined with the Cauchy--Schwarz inequality, the condition $r-2k>d/2$ ensures
$\int_{\mathbb{R}^d}\|\bm\omega\|^{2k}\big|\widehat{\bold u_m}(\bm\omega)\big|\,d\bm\omega<\infty$,
which in turn gives $\|E_1\|_{\infty}=\mathcal{O}(h^{2k-|\alpha|})$. This together with Equation \eqref{biasvariance} and $\|E_2\|_{\infty}=\mathcal{O}(h^{2k})$  yields
$\|D^{\alpha}(Q_{\mathbb{K}}\bold u_m)-D^{\alpha}\bold u_m\|_{\infty}=\mathcal{O}(h^{2k-|\alpha|})$ according to the triangle inequality.
\end{proof}
However, if we assume further that  the orthogonal projector $\mathbb{P}$ satisfies the Mikhlin condition \eqref{mikhlin} below, then a second approach is available to attain the $L^p$-error estimates for $1<p<\infty$. Relying on the fact that  $\mathbb{P}$ is a Fourier multiplier and therefore commutes with the semi-discrete convolution \eqref{quasiinterpolationofourpaper}, we are able to reduce the complete matrix-valued problem to the scalar case.
\begin{theorem}\label{errorestimateforderiLp}
Let $1<p<\infty$ and let $\bold u_m\in (W_p^r(\R^d))^m$ satisfy the dot-product constraints \eqref{physicalinvariants}. Let $Q_{\mathbb{K}}\bold u_m$ be vectorial quasi-interpolation defined as in Formula \eqref{quasiinterpolationofourpaper} and $Q_{\ell,h}$ be the scalar quasi-interpolation of \eqref{simultaneousofpol} applied to the components  of $\bold u_m$. Assume further that the matrix symbol $\mathbb{P}=(\mathbb{P}_{ij})$ satisfies the Mikhlin condition: there exist constants $C_{\gamma}$ with
\begin{equation}\label{mikhlin}
\big|D^{\gamma}\mathbb{P}_{ij}(\bm\omega)\big|\leq C_{\gamma}\|\bm\omega\|^{-|\gamma|}
\qquad\text{for all }\bm\omega\neq\bold 0,\ 1\leq i,j\leq m,\ |\gamma|\leq\lfloor d/2\rfloor+1.
\end{equation}   Then there is a constant $C_p$ depending only on $p$, $d$, $m$ and the constraint operators, such that
$$\big\|D^{\alpha}(Q_{\mathbb{K}}\bold u_m)-D^{\alpha}\bold u_m\big\|_{p}\leq C_p\max_{1\leq j\leq m}\big\|D^{\alpha}(Q_{\ell,h}u_{j})-D^{\alpha}u_{j}\big\|_{p}=\mathcal{O}(h^{2k-|\alpha|})$$
for every $0\leq|\alpha|<\min\{2k,r-2k,2\ell-d-1\}$ with $r-2k>d/2$.
\end{theorem}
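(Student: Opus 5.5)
Write $T_{\mathbb{P}}$ for the Fourier multiplier operator with symbol $\mathbb{P}$, i.e. $\widehat{T_{\mathbb{P}}\bold f}=\mathbb{P}\,\widehat{\bold f}$ for $\bold f\in(L^p(\R^d))^m$. The plan is to factor the vectorial quasi-interpolant as $Q_{\mathbb{K}}=T_{\mathbb{P}}\circ Q_{\ell,h}$, with $Q_{\ell,h}$ applied componentwise. For each $\bold x$, $Q_{\mathbb{K}}\bold u_m(\bold x)=h^d\sum_{\bold j}\mathbb{K}(\bold x-\bold jh)\bold u_m(\bold jh)$ is the convolution of $\mathbb{K}$ with the vector-valued measure $\mu_h:=h^d\sum_{\bold j}\bold u_m(\bold jh)\delta_{\bold jh}$. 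Since $\widehat{\mathbb{K}}=\mathbb{P}\widehat{\psi}_{\ell,k,h}$, the Fourier transform of $Q_{\mathbb{K}}\bold u_m$ in the distributional sense is $\mathbb{P}(\bm\omega)\widehat{\psi}_{\ell,k,h}(\bm\omega)\widehat{\mu_h}(\bm\omega)=\mathbb{P}(\bm\omega)\widehat{Q_{\ell,h}\bold u_m}(\bm\omega)$. Hence $Q_{\mathbb{K}}\bold u_m=T_{\mathbb{P}}(Q_{\ell,h}\bold u_m)$, and differentiating gives $D^{\alpha}Q_{\mathbb{K}}\bold u_m=T_{\mathbb{P}}(D^{\alpha}Q_{\ell,h}\bold u_m)$, because $D^\alpha$ is also a multiplier and commutes with $T_{\mathbb{P}}$.

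Next I would show that $T_{\mathbb{P}}$ fixes the target: $T_{\mathbb{P}}D^{\alpha}\bold u_m=D^{\alpha}\bold u_m$. The constraints \eqref{physicalinvariants} give $P^*(-i\bm\omega)\widehat{\bold u_m}(\bm\omega)=\bold 0$, with $\bm\omega$ and the sign convention chosen to match the Fourier transform of $P_l(-D)\cdot$. Then $PG^\dagger P^*\widehat{\bold u_m}=\bold 0$, so $\mathbb{P}\widehat{\bold u_m}=\widehat{\bold u_m}$ almost everywhere, and the same holds after multiplication by $(i\bm\omega)^\alpha$. Alternatively, one can reuse the interchange argument \eqref{interchange} from Lemma \ref{matrixconvolution}. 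Combining the two steps,
$$D^{\alpha}(Q_{\mathbb{K}}\bold u_m)-D^{\alpha}\bold u_m=T_{\mathbb{P}}\big(D^{\alpha}Q_{\ell,h}\bold u_m-D^{\alpha}\bold u_m\big).$$

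Now apply the Mikhlin--H\"ormander multiplier theorem. Condition \eqref{mikhlin} makes each scalar entry $\mathbb{P}_{ij}$ an $L^p$ multiplier for $1<p<\infty$, with norm $\le A_{p,d}\max_{|\gamma|\le\lfloor d/2\rfloor+1}C_\gamma$. Summing over the $m^2$ entries, with the norm $\|\cdot\|_p=\sum_j\|\cdot\|_p$, bounds the right-hand side by $C_p\sum_j\|D^\alpha Q_{\ell,h}u_j-D^\alpha u_j\|_p\le mC_p\max_j\|\cdots\|_p$. Estimate \eqref{simultaneousofpol} then gives $\mathcal{O}(h^{2k-|\alpha|})$ under the stated ranges of $\alpha$.

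The main obstacle is making the first step rigorous. The function $D^\alpha Q_{\ell,h}\bold u_m-D^\alpha\bold u_m$ lies in $L^p$, but $Q_{\ell,h}\bold u_m$ by itself is only a slowly decaying lattice sum, and $\mathbb{P}$ is not smooth at the origin. To handle this, I would apply $T_{\mathbb{P}}$ only to the error function, which is in $L^p$, and verify the identity on a dense class first. Take $\bold u_m$ Schwartz, or truncate the sample sequence to finitely many $\bold j$; then each translate $\mathbb{K}(\cdot-\bold jh)=T_{\mathbb{P}}[\psi_{\ell,k,h}(\cdot-\bold jh)I]$ is an identity between tempered distributions, and its derivatives of order $|\alpha|<2k$ decay like $\|\bold x\|^{-d-2k+|\alpha|}$ by Lemma \ref{polyharmonicsplinepropert}. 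Finally, pass to the limit using the $L^p$ boundedness of $T_{\mathbb{P}}$ and the uniform convergence of the lattice sums established in the proof of Theorem \ref{errorestimateforderi}.
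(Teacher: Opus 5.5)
Your proposal is correct and follows the paper's proof: writing $Q_{\mathbb{K}}\mathbf u_m=\mathbb{K}*\mu_h$ and $Q_{\ell,h}\mathbf u_m=\psi_{\ell,k,h}*\mu_h$ to get $Q_{\mathbb{K}}=T_{\mathbb{P}}\circ Q_{\ell,h}$, using the constraints to obtain $\mathbb{P}\widehat{\mathbf u_m}=\widehat{\mathbf u_m}$, then applying Mikhlin--H\"ormander entrywise and the scalar estimate \eqref{simultaneousofpol}. Your truncation argument is more careful than the paper, which simply asserts the distributional identity, but correct one point in it: the decay $\|\mathbf x\|^{-d-2k+|\alpha|}$ of Lemma~\ref{polyharmonicsplinepropert} holds for $\psi_{\ell,k,h}$, not for $\mathbb{K}$ when $\mathbb{P}$ is homogeneous of degree zero (the typical Mikhlin case), where $\mathbb{K}$ decays only like $\|\mathbf x\|^{-d}$ (Remark~\ref{L1obstruction}), so $\sum_{\mathbf j}|\mathbb{K}(\mathbf x-\mathbf jh)|$ diverges; pass to the limit instead via $L^p$-boundedness of $T_{\mathbb{P}}$ on the truncated $\psi_{\ell,k,h}$-sums and H\"older's inequality for the $\mathbb{K}$-sums, not via the uniform lattice-sum convergence claimed in the proof of Theorem~\ref{errorestimateforderi}. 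The H\"older step works because the samples $(\mathbf u_m(\mathbf jh))_{\mathbf j}$ lie in $\ell^p$ with $p<\infty$.
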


\begin{proof}
Write $T_{\mathbb{P}}f:=\mathfrak{F}^{-1}\bigg(\mathbb{P}\,\widehat{f}\bigg)$ for the Fourier multiplier operator with matrix symbol $\mathbb{P}$ defined in Equation \eqref{proop}, and let $Q_{\ell,h}\bold u_m$ denote the scalar scheme applied to each component of $\bold u_m$. We first show that $\mathbb{P}$ commutes with the semi-discrete convolution.

The condition $r>d/p$ embeds $\bold u_m$ into a continuous function space by Morrey's inequality. This together with the assumption $\bold u_m\in (W^r_p)^m$ makes the lattice sum $\mu_h:=h^d\sum_{\bold j\in\mathbb{Z}^d}\bold u_m(\bold jh)\delta_{\bold jh}$  absolutely convergent. Moreover, since both quasi-interpolants are convolutions against $\mu_h$, namely $Q_{\mathbb{K}}\bold u_m=\mathbb{K}_{\ell,k,m,n,h}*\mu_h$ and $Q_{\ell,h}\bold u_m=\psi_{\ell,k,h}*\mu_h$, so by Equation \eqref{matrixkernel} their Fourier transforms satisfy
$$\mathfrak{F}\big(Q_{\mathbb{K}}\bold u_m\big)=\widehat{\mathbb{K}}\,\widehat{\mu_h}=\mathbb{P}\,\widehat{\psi}_{\ell,k,h}\,\widehat{\mu_h}=\mathbb{P}\,\mathfrak{F}\big(Q_{\ell,h}\bold u_m\big).$$
Therefore, we have
\begin{equation}\label{commute}
Q_{\mathbb{K}}\bold u_m=T_{\mathbb{P}}\big(Q_{\ell,h}\bold u_m\big)
\end{equation}
holds true in the sense of tempered distributions.

On the other side, the dot-product constraints \eqref{physicalinvariants} lead to $P^*(-i\bm\omega)\widehat{\bold u_m}(\bm\omega)=\bold 0_n$ for almost every $\bm\omega$. More precisely, $\widehat{\bold u_m}(\bm\omega)$ lies in the kernel space of $P^*(-i\bm\omega)$, i.e., $\widehat{\bold u_m}(\bm\omega)\in\ker (P^*(-i\bm\omega))$. Moreover, by Proposition \ref{propofp}, $\mathbb{P}(\bm\omega)$ is the orthogonal projector onto this kernel space. Consequently, we have $\mathbb{P}\widehat{\bold u_m}=\widehat{\bold u_m}$ and
$T_{\mathbb{P}}\bold u_m=\bold u_m$, $T_{\mathbb{P}}\big(D^{\alpha}\bold u_m\big)=D^{\alpha}\bold u_m$, which together with Equation
 \eqref{commute} leads to
$D^{\alpha}\big(Q_{\mathbb{K}}\bold u_m\big)-D^{\alpha}\bold u_m
=T_{\mathbb{P}}\Big[D^{\alpha}\big(Q_{\ell,h}\bold u_m\big)-D^{\alpha}\bold u_m\Big]$.

Furthermore, the assumption that the matrix symbol $\mathbb{P}$ satisfies the Mikhlin condition \eqref{mikhlin} together with  the Mikhlin-H\"ormander multiplier theorem \cite{Stein} gives $\|T_{\mathbb{P}_{ij}}f\|_p\leq c_p\|f\|_p$ for $1<p<\infty$, and summing over the $m^2$ entries yields $\|T_{\mathbb{P}}\bold f\|_p\leq C_p\|\bold f\|_p$ with $C_p\leq m\,c_p$. This in turn leads to
$$\big\|D^{\alpha}(Q_{\mathbb{K}}\bold u_m)-D^{\alpha}\bold u_m\big\|_{p}=T_{\mathbb{P}}\Big[D^{\alpha}\big(Q_{\ell,h}\bold u_m\big)-D^{\alpha}\bold u_m\Big]\leq C_p\big\|D^{\alpha}(Q_{\ell,h}\bold u_m)-D^{\alpha}\bold u_m\big\|_{p},$$
which together with the scalar estimate \eqref{simultaneousofpol} applied to each component  completes the proof.
\end{proof}

\begin{remark}\label{mikhlinholds}
We provide two sufficient conditions for Mikhlin condition \eqref{mikhlin}, which makes the $L^p$-error estimates in Theorem \ref{errorestimateforderiLp} holds with $1<p<\infty$.

\emph{(a) Homogeneous systems with locally constant Gram rank.}
Assume that each symbol $P_l$ is homogeneous of order $r_l$ (where the exponents $r_l$ may differ across components) and that the rank of $G(-i\bm\omega)$ is constant throughout $\mathbb{R}^d\setminus\{\boldsymbol{0}\}$. By homogeneity, the scaling relation $P(-i\lambda\bm\omega)=P(-i\bm\omega)\Lambda(\lambda)$ holds with $\Lambda(\lambda)=\operatorname{diag}(\lambda^{r_1},\dots,\lambda^{r_n})$. The diagonal scaling factor $\Lambda(\lambda)$ cancels out consistently among $P$, $G^{\dagger}$, and $P^*$, regardless of whether the orders $r_l$ are identical or distinct. As a result, the multiplier $\mathbb{P}$ is homogeneous of degree zero. The constant-rank assumption further ensures that $G^{\dagger}$, and consequently $\mathbb{P}$, is real-analytic on $\mathbb{R}^d\setminus\{\boldsymbol{0}\}$.
For a degree-zero homogeneous function, $D^{\gamma}$ lowers the degree of homogeneity to $-|\gamma|$, so Mikhlin condition \eqref{mikhlin} holds with $C_{\gamma}=\max_{\|\bm\theta\|=1}|\partial^{\gamma}\mathbb{P}_{ij}(\bm\theta)|<\infty$. The examples of first-order constraints discussed in Subsection $4.2$, together with the Saint-Venant conditions treated there and the second-order operator of Subsection $5.2$, are of this type.

\emph{The Saint-Venant example needs care.}  While each $P_l$ is homogeneous, the rank of $G(-i\bm\omega)$ is not globally constant: the rank drops on coordinate hyperplanes, where $\mathbb{P}$ exhibits a jump discontinuity from rank three to rank four. This violates the hypotheses of condition (a) in its original form.
As elaborated in Subsection $4.2$, this issue can be resolved by considering the regularized multiplier $\mathcal{A}(\mathcal{A}^T\mathcal{A})^{-1}\mathcal{A}^T$. This modified multiplier is zero-homogeneous and real-analytic on $\mathbb{R}^d\setminus\{\boldsymbol{0}\}$, since the matrix $\mathcal{A}^T\mathcal{A}=\tfrac12\big(\|\bm\omega\|^2I_{3\times3}+\bm\omega\bm\omega^T\big)$ is invertible for all nonzero $\bm\omega$. Moreover, it coincides with the original multiplier $\mathbb{P}$ almost everywhere (away from a Lebesgue-null set).
Although the two representations induce the same operator on $L^2$, only the smooth regularized multiplier verifies the pointwise Mikhlin condition \eqref{mikhlin}. This illustrates a key principle: condition \eqref{mikhlin} should be verified against a smooth representative of the multiplier equivalence class, rather than the naive pointwise formula \eqref{proop}.

\emph{(b) Non-degenerate Gram symbols (homogeneous or inhomogeneous).} Alternatively, suppose that $G(-i\bm\omega)$ is invertible for all $\bm\omega\in\mathbb{R}^d$, including the origin. In this case, the multiplier $\mathbb{P}=I_{m\times m}-PG^{-1}P^*$ is infinitely smooth on the entire space $\mathbb{R}^d$, with uniformly bounded derivatives of all orders. The Mikhlin condition \eqref{mikhlin} therefore holds trivially, with the right-hand side naturally dominating as $\bm\omega\to\boldsymbol{0}$. No homogeneity assumption is required here. This framework covers the canonical inhomogeneous example $P_1(\nabla)=\bm\lambda+\nabla$ in Section $4.1$, for which $G(-i\bm\omega)=\|\bm\lambda\|^2+\|\bm\omega\|^2>0$. Notably, the associated multiplier $\mathbb{P}$ is not degree-zero homogeneous, so it is governed by case (b) rather than case (a).
\end{remark}

The endpoint $p=1$ is excluded from Theorem \ref{errorestimateforderiLp} because $\mathbb{P}$ generically involves Riesz transforms, which are unbounded on $L^1$. This makes the factorization \eqref{commute}  inapplicable at this endpoint. Nevertheless, case (b) of Remark \ref{mikhlinholds} does admit $p=1$: one may proceed via an argument that completely circumvents \eqref{commute} and operate directly on the matrix-valued kernel.  This is feasible because a non-degenerate Gram symbol renders  $\mathbb{P}$ smooth \emph{at the origin}, and it is the behaviour of $\widehat{\mathbb{K}}$ at the origin that governs the far-field decay of $\mathbb{K}$, hence its integrability.
\begin{theorem}\label{errorestimateforderiL1}
Let $\bold u_m \in (W_1^r(\mathbb{R}^d))^m$ satisfy the dot-product constraints \eqref{physicalinvariants} with $r>d$. Suppose further that $G(-i\bm\omega)$ is invertible for all $\bm\omega\in\mathbb{R}^d$, including the origin. Then
\[
\big\|D^{\alpha}\big(Q_{\mathbb{K}}\bold u_m\big) - D^{\alpha}\bold u_m\big\|_{L^1} = \mathcal{O}\big(h^{2k-|\alpha|}\big)
\]
holds for every multi-index $\alpha$ with $0\leq|\alpha| < \min\{2k,\, r-2k,\, 2\ell-d-1\}$ with $r-2k>d/2$.
\end{theorem}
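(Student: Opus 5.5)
The plan is to bypass the factorization \eqref{commute} and work directly with the matrix kernel $\mathbb{K}$ in the bias--variance split \eqref{biasvariance}. The bias term is already handled: by Lemma \ref{matrixconvolution}, which rests on the operator transfer \eqref{interchange} and Lemma \ref{convolution} with $p=1$, we have $\|E_2\|_{1}=\mathcal{O}(h^{2k})$. No Fourier multiplier on $L^1$ enters here, so it only remains to bound the variance term $E_1=D^{\alpha}(Q_{\mathbb{K}}\bold u_m)-\mathbb{K}*\bold v_m^{\alpha}$ in $L^1$. I would reproduce the scalar Schoenberg-type argument behind \eqref{simultaneousofpol} from \cite{Leiandjia}, with $\psi_{\ell,k,h}$ replaced by $\mathbb{K}$. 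That argument uses only two properties of the kernel, so the core of the proof is to verify both for $\mathbb{K}$:
\begin{enumerate}[(i)]
\item the Strang--Fix-type conditions \eqref{Strangfix} for $\widehat{\mathbb{K}}$ at the aliases $2\pi\bold j/h$;
\item spatial decay $D^{\beta}\mathbb{K}(\bold x)=\mathcal{O}(h^{2k-2|\beta|}\|\bold x\|^{-d-2k+|\beta|})$ for $|\beta|<2k$, which makes $D^{\alpha}\mathbb{K}$ integrable and gives absolutely convergent lattice sums.
\end{enumerate}

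For (i), invertibility of $G(-i\bm\omega)$ on all of $\mathbb{R}^d$ means $G^{-1}$ is smooth, indeed rational with nonvanishing denominator $\det G$. Hence $\mathbb{P}=I-PG^{-1}P^*$ belongs to $C^\infty(\mathbb{R}^d)$. Since $\widehat{\mathbb{K}}=\mathbb{P}\,\widehat{\psi}_{\ell,k,h}$, the Leibniz rule together with \eqref{Strangfix} gives $D^{\gamma}\widehat{\mathbb{K}}(2\pi\bold j/h)=0$ for $\bold j\neq\bold 0$ and $|\gamma|<2k$. At the origin, $\widehat{\mathbb{K}}$ is $C^{2k}$ and equals $\mathbb{P}(\bold 0)$ to order $2k$. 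The $\bold j=\bold 0$ term never enters $E_1$; it is absorbed into $\mathbb{K}*\bold v_m^\alpha$. So only the vanishing at nonzero aliases is needed.

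For (ii), I would use the scaling $\mathbb{K}(\bold x)=h^{-d}\mathbb{K}_1(\bold x/h)$. This scaling does not hold exactly, since $\mathbb{P}$ is not homogeneous in case (b). I would therefore argue directly. The smoothness of $\widehat{\mathbb{K}}$ is dictated by $\widehat{\psi}_{\ell,k,h}\in C^{2k}$ near the origin and by $\mathbb{P}\in C^\infty$ with bounded derivatives; bounded derivatives follow because $\mathbb{P}$ tends to a degree-zero homogeneous limit at infinity, so it satisfies symbol-type estimates. Away from the origin, $\widehat{\psi}_{\ell,k,h}$ behaves like $\|h\bm\omega\|^{-2\ell}$ times a trigonometric polynomial. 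Standard integration by parts, as in Rabut's proof of Lemma \ref{polyharmonicsplinepropert}, then converts the $C^{2k}$ regularity at $\bold 0$ (a $\|\bm\omega\|^{2k}$-type homogeneous singular part) into decay $\|\bold x\|^{-d-2k}$. The high-frequency decay of order $2\ell$ controls local regularity, giving $\mathbb{K}\in C^{2\ell-d-1}$. I expect this step to be the main obstacle. Care is needed because the singular part of $\widehat{\psi}_{\ell,k,h}$ at $\bold 0$ is multiplied by the smooth but nonpolynomial $\mathbb{P}$. I would expand $\mathbb{P}$ in its Taylor polynomial of order $2k$ at $\bold 0$ plus a remainder and treat each piece as in the scalar case. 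If direct decay proves hard, a fallback is to write $\mathbb{K}=\psi_{\ell,k,h}I-\mathcal{G}*(P^T(-D)\ldots)$, where $\mathcal{G}=\mathfrak{F}^{-1}(PG^{-1}P^*)$. Invertibility of $G$ at $\bold 0$ makes $\mathcal{G}$ a Bessel-potential-type kernel, and I would aim to show it lies in $L^1$ with exponential decay, which yields $\|\mathbb{K}\|_1\lesssim\|\psi_{\ell,k,h}\|_1$ plus decay.

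With (i) and (ii) established, the $L^1$ variance bound follows the scalar route in spatial variables, with no multiplier theorem. Write $E_1(\bold x)=h^d\sum_{\bold j}D^{\alpha}\mathbb{K}(\bold x-\bold jh)\bold u_m(\bold jh)-\int D^{\alpha}\mathbb{K}(\bold x-\bold t)\bold u_m(\bold t)\,d\bold t$. Expand $\bold u_m$ locally in a Taylor polynomial of degree $2k-1$ plus an integral remainder. By (i) and Poisson summation, the lattice sum reproduces the integral exactly on polynomial moments of degree less than $2k-|\alpha|$, since the alias terms vanish. The remainder is bounded in $L^1$ by $h^{2k-|\alpha|}\sum_{|\beta|\le 2k}\|D^{\beta}\bold u_m\|_1$ times $\int(1+\|\bold y\|)^{2k}|D^{\alpha}\mathbb{K}_1(\bold y)|\,d\bold y$ with moments controlled using (ii). The conditions $r-2k>d/2$ and $r>d$ guarantee that point values and $W_1^{2k}$-norms are finite, and the index restriction $|\alpha|<2\ell-d-1$ guarantees that $D^{\alpha}\mathbb{K}$ exists. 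Combining this bound with $\|E_2\|_1=\mathcal{O}(h^{2k})$ gives the claim.
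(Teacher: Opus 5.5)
Your outline is the paper's own proof: the bias term comes from Lemma~\ref{matrixconvolution}, and the scalar variance argument of \cite{Leiandjia} is run entrywise on $\mathbb{K}$ after checking Strang--Fix at the nonzero aliases (your Leibniz argument for this is fine) and the decay (ii). The gap is the point you noticed and then set aside.

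The scalar estimate \eqref{simultaneousofpol} is proved for $\psi_{\ell,k,h}=h^{-d}\psi_{\ell,k}(\cdot/h)$, that is, for dilates of one fixed kernel. What has to be transferred are therefore bounds \emph{uniform in $h$}, and in case (b) these fail in general. The rescaled kernel $h^{d}\mathbb{K}(h\mathbf{y})$ has symbol $\mathbb{P}(\bm\eta/h)\,\widehat{\psi_{\ell,k}}(\bm\eta)$. As $h\to 0$ this tends to $\mathbb{P}_{\infty}(\bm\eta)\widehat{\psi_{\ell,k}}(\bm\eta)$, where $\mathbb{P}_\infty$ is the degree-zero homogeneous limit of $\mathbb{P}$ at infinity. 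That limit kernel is exactly the non-integrable kernel of Remark~\ref{L1obstruction}.

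Concretely, take $P_1(\nabla)=\bm\lambda+\nabla$. Then $\mathbb{P}(\bm\omega)=I-\bm\omega\bm\omega^T/\|\bm\omega\|^2+\mathcal{O}(\|\bm\omega\|^{-1})$ as $\|\bm\omega\|\to\infty$. So for $h\ll\|\mathbf{x}\|\ll\|\bm\lambda\|^{-1}$ the kernel agrees to leading order with the divergence-free kernel, and $|\mathbb{K}(\mathbf{x})|$ is of size $\|\mathbf{x}\|^{-d}$ with no power of $h$. Three consequences follow:
\begin{enumerate}[(a)]
\item (ii) with the prefactor $h^{2k-2|\beta|}$ (the same claim as \eqref{kerneldecayL1}) is false uniformly in $h$.
\item $\|\mathbb{K}\|_{1}\gtrsim\log(1/h)$.
\item The weighted integral $\int\|\mathbf{y}\|^{2k}|D^{\alpha}\mathbb{K}(\mathbf{y})|\,d\mathbf{y}$ that your Taylor-remainder bound needs receives from this zone about $\int_{h}^{1/\|\bm\lambda\|}r^{2k-|\alpha|-1}\,dr=\mathcal{O}(1)$, not $\mathcal{O}(h^{2k-|\alpha|})$.
\end{enumerate}
Invertibility of $G$ at the origin removes the $\|\mathbf{x}\|^{-d}$ tail only beyond the intrinsic length scale of $\mathbb{P}$, not between that scale and $h$.

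Your last step has two further defects. First, the constant $\int(1+\|\mathbf{y}\|)^{2k}|D^{\alpha}\mathbb{K}_1(\mathbf{y})|\,d\mathbf{y}$ is infinite under (ii): the integrand behaves like $\|\mathbf{y}\|^{-d+|\alpha|}$, and even the sharp scalar decay $\|\mathbf{y}\|^{-d-2k}$ diverges logarithmically at $\alpha=0$. So a full-order Taylor remainder cannot simply be paired with kernel moments. Second, $\mathbb{K}_1$ presupposes the dilation structure you correctly said is unavailable.

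The fallback does not help. $PG^{-1}P^*$ does not decay at infinity, so $\mathfrak{F}^{-1}(PG^{-1}P^*)$ has a Calder\'on--Zygmund singularity at $\mathbf{0}$ and is not in $L^1$. Moreover, $\|\mathbb{K}\|_1\lesssim\|\psi_{\ell,k,h}\|_1$ would contradict the logarithmic growth above, since $\|\psi_{\ell,k,h}\|_1$ is independent of $h$ by \eqref{keyrelation}.

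Be aware that the paper's proof asserts \eqref{kerneldecayL1} and invokes \cite{Leiandjia} entrywise, so following it does not supply the missing uniformity either. A repair has to exploit cancellation rather than the size of $\mathbb{K}$. One possible starting point is the identity $E_1=T_{\mathbb{P}}\big(D^{\alpha}Q_{\ell,h}\mathbf{u}_m-\psi_{\ell,k,h}*D^{\alpha}\mathbf{u}_m\big)$, which follows from \eqref{commute}. Combine it with two facts: in case (b) only the high-frequency part of $T_{\mathbb{P}}$ fails to be bounded on $L^1$, and the scalar variance term is concentrated near the aliases $2\pi\mathbf{j}/h$, $\mathbf{j}\neq\mathbf{0}$.
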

\begin{proof}
Noting that the bias term $E_2$ in \eqref{biasvariance} is controllable for all $1\leq p\leq \infty$, it suffices to establish the $L^1$ bound for $E_1$. Since $G(-i\bm\omega)$ is invertible over $\mathbb{R}^d$ and $P$ is a polynomial matrix, the operator $\mathbb{P}=I_{m\times m}-PG^{-1}P^*$ lies in $C^{\infty}(\mathbb{R}^d)$ including at the origin with uniformly bounded derivatives, as stated in case (b) of Remark \ref{mikhlinholds}. By the kernel representation \eqref{matrixkernel}, we have $\widehat{\mathbb{K}}=\mathbb{P}\widehat{\psi}_{\ell,k,h}$.
Multiplying by a smooth function near the origin does not deteriorate the local regularity of $\widehat{\psi}_{\ell,k,h}$, while multiplication by a bounded function preserves its far-field decay. As a result, each entry of $\mathbb{K}$ inherits the decay estimate from Lemma \ref{polyharmonicsplinepropert}, namely
\begin{equation}\label{kerneldecayL1}
D^{\alpha}\mathbb{K}(\bm x)=\mathcal{O}\big(h^{2k-2|\alpha|}\|\bm x\|^{-d-2k+|\alpha|}\big).
\end{equation}
Since $|\alpha|<2k$, the exponent satisfies $-d-2k+|\alpha|<-d$, which ensures $D^{\alpha}\mathbb{K}\in \big(L^1(\mathbb{R}^d)\big)^{m\times m}$. Accordingly, the lattice sum $\sum_{\bm j}|D^{\alpha}\mathbb{K}(\bm x-\bm jh)|$ converges absolutely and uniformly with respect to $\bm x$. This crucial convergence property breaks down for degree-zero homogeneous $\mathbb{P}$, as elaborated in Remark \ref{L1obstruction}.
Furthermore, the scalar-kernel estimate \eqref{simultaneousofpol} relies exclusively on two key ingredients: kernel decay and the Strang Fix conditions. Since $D^{\alpha}\mathbb{K}$ satisfies both properties with identical exponent behaviors as $\psi_{\ell,k,h}$, the proof strategy in \cite{Leiandjia} remains valid for every matrix entry, yielding $\|E_1\|_1=\mathcal{O}(h^{2k-|\alpha|})$.
\end{proof}
\begin{remark}\label{L1obstruction}
The hypothesis that $G$ is non-degenerate at the origin cannot be dropped from Theorem \ref{errorestimateforderiL1}. The obstruction is not merely that the operator $T_{\mathbb{P}}$ is unbounded on $L^1$: when $\mathbb{P}$ is homogeneous of degree zero, the kernel $\mathbb{K}$ itself fails to be integrable. Indeed, since $\widehat{\mathbb{K}}=\mathbb{P}\widehat{\psi}_{\ell,k,h}$ and $\widehat{\psi}_{\ell,k,h}(\bm 0)=1$, $\widehat{\mathbb{K}}$ inherits the discontinuity of $\mathbb{P}$ at the origin. A degree-zero homogeneous singularity at $\bm\omega=\bm 0$ corresponds to a spatial tail behaving exactly like $\|\bm x\|^{-d}$, which is borderline non-integrable. The decay estimate \eqref{kerneldecayL1} therefore breaks down.

We illustrate this phenomenon via numerical measurements for two kernels considered in this paper, with parameters $d=2$, $\ell=2$, and $k=1$. For the divergence-free kernel, where $\mathbb{P}=I_{2\times2}-\bm\omega\bm\omega^T/\|\bm\omega\|^2$ is homogeneous of degree zero, the numerically computed decay exponent of $\max_{\|\bm x\|=R}|\mathbb{K}_{11}|$ equals $2.000$ up to three decimal places over the range $R\in[10,320]$. The quantity $R^{d}\max|\mathbb{K}_{11}|$ converges to $0.159$ rather than decaying, while the mass $\int_{R\leq\|\bm x\|\leq2R}|\mathbb{K}_{11}|$ evaluates to $0.441$ on each dyadic annulus. Consequently, $\|\mathbb{K}_{11}\|_{L^1}$ diverges logarithmically. By contrast, for the kernel associated with $P_1(\nabla)=\bm\lambda+\nabla$, where $G=\|\bm\lambda\|^2+\|\bm\omega\|^2$ never vanishes, the same numerical tests yield a decay exponent of approximately $6$, geometrically decaying annulus masses, and a finite total mass. This contrast precisely reflects the dichotomy stated in Remark \ref{mikhlinholds}, now observed in physical space.

Two consequences are worth noting. First, under homogeneous constraints, even giving a rigorous definition of $Q_{\mathbb{K}}$ on $L^1$ requires careful consideration, as the associated lattice sum is only conditionally convergent. One would need to exploit cancellations stemming from the vanishing spherical mean of the degree-zero symbol, along the lines of Calder\'on--Zygmund theory. Second, this same borderline decay explains why the proof of Theorem \ref{errorestimateforderi} is confined to the $L^2$ setting, where Plancherel’s identity renders the integrability of $\mathbb{K}$ irrelevant. We leave open whether the conclusion of Theorem \ref{errorestimateforderiL1} continues to hold for homogeneous constraints, possibly with $L^1$ replaced by the Hardy space $H^1$. 
\end{remark}

Up to now, we have  constructed structure-preserving  quasi-interpolation and derived $L^p$-error estimates in Theorem \ref{errorestimateforderi} (for $p=2$ and $\infty$), and  Theorem \ref{errorestimateforderiLp} (for $1<p<\infty$), Theorem \ref{errorestimateforderiL1} (for $p=1$) under certain conditions on the orthogonal projector $\mathbb{P}$. To widen the range of applications, we go further with proposing a quasi-interpolation based approach for numerical decomposition of  general smooth vector fields.
\subsection{Quasi-interpolation for vector field decomposition}
 Following \cite{Fisher}, we first extend the Helmholtz--Hodge (H-H) decomposition to a general setting by means of the generalized Fourier transform. We take $m=3$ and $n=1$ throughout this subsection. Then $P_1^*(-i\bm\omega)P_1(-i\bm\omega)$ is a scalar function of $\bm\omega$ and we assume its inverse $(P_1^*(-i\bm\omega)P_1(-i\bm\omega))^{-1}$ is well defined for $\bm \omega\neq \bold 0$, otherwise, we can use the pseudo-inverse. Accordingly, $G^\dagger(-i\bm{\omega})=(P_1^*(-i\bm\omega)P_1(-i\bm\omega))^{-1}$ and $\Phi_1$ is the fundamental solution of $P_1^T(-D)P_1(D)$.
  \begin{remark}\label{scopeofcross}
The dot-product constraints \eqref{physicalinvariants} make sense for every $m$, whereas the cross-product constraint \eqref{otherphysicalinvariants} is intrinsically three-dimensional: the vector cross product exists as a bilinear map $\R^m\times\R^m\to\R^m$ only for $m=3$, through the Hodge identification. This is why we take $m=3$, and no general-$m$ analogue of Lemma \ref{ghh} or of Identity \eqref{generalidentity} should be expected. In addition, observing that if the cross product of two vectors with a third vector vanishes, then the two vectors are parallel, we consider the cross-product constraint with $n=1$.
\end{remark}
  \begin{lemma}[Generalized Helmholtz--Hodge decomposition]\label{ghh}
For any vector-valued function $\bold u_3\in (W_p^r(\R^d))^3$, we define two  projection operators via inverse Fourier transform as $$\mathcal{P}_{P_1(D)}^{\cdot}\bold u_3=\mathfrak{F}^{-1}\Bigg[I_{3\times 3}\widehat{\bold u_3}(\bm{\omega})-P_1(-i\bm{\omega})(P_1^*(-i\bm\omega)P_1(-i\bm\omega))^{-1}P_1^*(-i\bm{\omega})\widehat{\bold u_3}(\bm{\omega})\Bigg],$$
  $$\mathcal{P}_{P_1(D)}^{\times}\bold u_3=\mathfrak{F}^{-1}\Bigg[P_1(-i\bm{\omega})(P_1^*(-i\bm\omega)P_1(-i\bm\omega))^{-1}P_1^*(-i\bm{\omega})\widehat{\bold u_3}(\bm{\omega})\Bigg].$$
  Then $\bold u_3$ has a generalized Helmholtz--Hodge decomposition \begin{equation}\label{generhh}
\bold u_3=\mathcal{P}_{P_1(D)}^{\cdot}\bold u_3 +\mathcal{P}_{P_1(D)}^{\times}\bold u_3
 \end{equation} with $$\mathcal{P}_{P_1(D)}^{\cdot}\bold u_3=[P_1^T(-D)P_1(D)I_{3\times 3} -P_1(D)P_1^T(-D)](\Phi_1*\bold u_3)$$ and $$\mathcal{P}_{P_1(D)}^{\times}\bold u_3=P_1(D)P_1^T(-D)(\Phi_1*\bold u_3).$$ Moreover, we have $P_1(-D)\cdot(\mathcal{P}_{P_1(D)}^{\cdot}\bold u_3)=0$ and $P_1(D)\times(\mathcal{P}_{P_1(D)}^{\times}\bold u_3)=\bold 0_3^T$. Particularly, when $p=2$, the two components are orthogonal to each other, that is, $\mathcal{P}_{P_1(D)}^{\cdot}\bold u_3 \perp\mathcal{P}_{P_1(D)}^{\times}\bold u_3$.
\end{lemma}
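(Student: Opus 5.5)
The whole argument can be run on the Fourier side, where the symbol of the decomposition is a scalar-normalized rank-one projector. Put $\mathbf a(\bm\omega):=P_1(-i\bm\omega)\in\mathbb C^3$ and $g(\bm\omega):=P_1^*(-i\bm\omega)P_1(-i\bm\omega)=\|\mathbf a(\bm\omega)\|^2$, and assume, as in the preamble to the lemma, that $g(\bm\omega)\neq0$ for $\bm\omega\neq\mathbf 0$. Then $\Pi(\bm\omega):=\mathbf a g^{-1}\mathbf a^*$ is the orthogonal projector onto $\mathrm{span}\,\mathbf a(\bm\omega)$. This is the case $n=1$ of Proposition \ref{propofp}, and the symbol of $\mathcal P^{\cdot}_{P_1(D)}$ is $\mathbb P=I_{3\times3}-\Pi$.

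\textbf{Decomposition and convolution forms.} The identity \eqref{generhh} follows immediately from $\mathbb P+\Pi=I_{3\times3}$ and linearity of $\mathfrak F^{-1}$. For the spatial representations, I use $\widehat{\Phi}_1=g^{-1}$, which holds because $\Phi_1$ is the fundamental solution of $P_1^T(-D)P_1(D)$, whose symbol is $\mathbf a^T(-\bm\omega)\mathbf a(\bm\omega)$. The symbol of $P_1(D)$ is $P_1(i\bm\omega)$, and since $P_1$ has real coefficients, the symbol of $P_1^T(-D)$ is $P_1^T(-i\bm\omega)=\overline{P_1(i\bm\omega)}^{\,T}$. The sign convention has to be matched against the definition $P^*(-i\bm\omega)=P^T(i\bm\omega)$ used in the paper, and I would fix this with the same convention as in \eqref{matrixkernel}, where $(P(D)\Phi_n)*(P^T(-D)\psi)$ is identified with $\mathfrak F^{-1}[PG^\dagger P^*\widehat\psi]$. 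Granting that identification, $P_1(D)P_1^T(-D)(\Phi_1*\mathbf u_3)$ has Fourier transform $\Pi\widehat{\mathbf u_3}$. Also $P_1^T(-D)P_1(D)\Phi_1=\delta$, so $P_1^T(-D)P_1(D)(\Phi_1*\mathbf u_3)=\mathbf u_3$, and the formula for $\mathcal P^{\cdot}_{P_1(D)}$ follows by subtraction.

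\textbf{Constraints.} For the dot-product part, Proposition \ref{propofp} gives $P_1(i\bm\omega)\cdot\mathbb P(\bm\omega)=\mathbf 0_3^T$. Applying this to $\widehat{\mathbf u_3}$ and inverting the transform, as in Lemma \ref{physicalconstraintsofkernels}, yields $P_1(-D)\cdot\mathcal P^{\cdot}\mathbf u_3=0$. For the cross-product part, the symbol of $P_1(D)\times\mathbf w$ is $P_1(i\bm\omega)\times\widehat{\mathbf w}$. The transform of $\mathcal P^{\times}\mathbf u_3$ is $\mathbf a(\bm\omega)\,c(\bm\omega)$ with the scalar $c=g^{-1}\mathbf a^*\widehat{\mathbf u_3}$, so it is parallel to $\mathbf a(\bm\omega)=P_1(-i\bm\omega)$. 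The step I expect to be the main obstacle is checking that $P_1(i\bm\omega)\times P_1(-i\bm\omega)=\mathbf 0$. This is not automatic for inhomogeneous $P_1$: when $P_1$ is homogeneous of order $r_1$ we have $P_1(i\bm\omega)=(-1)^{r_1}P_1(-i\bm\omega)$, and the cross product of parallel vectors vanishes. In general, however, one needs the conjugation convention aligned so that the range vector and the differentiation symbol coincide. If that fails, I would restate the constraint with the matching sign, i.e.\ as $P_1(-D)\times\cdot$, or restrict to homogeneous $P_1$, which is the case the paper's examples use.

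\textbf{Orthogonality for $p=2$.} By Plancherel, $\langle\mathcal P^{\cdot}\mathbf u_3,\mathcal P^{\times}\mathbf u_3\rangle_{L^2}=\int(\Pi\widehat{\mathbf u_3})^*\,\mathbb P\widehat{\mathbf u_3}\,d\bm\omega$. The integrand equals $\widehat{\mathbf u_3}^*\,\Pi(I-\Pi)\,\widehat{\mathbf u_3}=0$ pointwise, because $\Pi$ is Hermitian and idempotent. The contraction bound \eqref{contraction} guarantees that both components lie in $L^2$, so the integral is well defined.
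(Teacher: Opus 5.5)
Your argument is the paper's own proof: $\mathbb P+\Pi=I_{3\times3}$ gives the splitting, $\widehat{\Phi}_1=g^{-1}$ gives the spatial forms, Proposition~\ref{propofp} gives the dot-product constraint, and Plancherel with $\Pi(I-\Pi)=0$ gives orthogonality. The one step you leave open, the cross-product constraint, is not a genuine obstruction. It arises because you use two different Fourier conventions in different paragraphs of your own proof.

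\emph{Where the conventions split.} Two of your steps already commit you to $\widehat{D^\alpha f}=(-i\bm\omega)^\alpha\widehat f$, as the paper does in Lemma~\ref{physicalconstraintsofkernels} and Lemma~\ref{crosprojectionconvolutionerror}:
\begin{itemize}
\item passing from $P_1(i\bm\omega)\cdot\mathbb P(\bm\omega)=\mathbf 0_3^T$ to $P_1(-D)\cdot\mathcal P^{\cdot}_{P_1(D)}\mathbf u_3=0$;
\item granting that $P_1(D)P_1^T(-D)(\Phi_1*\mathbf u_3)$ has transform $\Pi\widehat{\mathbf u_3}$.
\end{itemize}
Under this convention the symbol of $P_1(D)$ is $P_1(-i\bm\omega)=\mathbf a(\bm\omega)$, and the symbol of $P_1^T(-D)$ is $P_1^T(i\bm\omega)=\mathbf a^*(\bm\omega)$. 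In the cross-product step you switch to the symbol $P_1(i\bm\omega)$. If you keep the first convention, the transform of $P_1(D)\times\mathcal P^{\times}_{P_1(D)}\mathbf u_3$ is $\mathbf a\times(\mathbf a\,c)=\mathbf 0$ for every $P_1$, homogeneous or not.

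\emph{The paper's cleaner route.} The paper avoids symbols entirely here. Once you have the spatial representation $\mathcal P^{\times}_{P_1(D)}\mathbf u_3=P_1(D)\chi$ with the scalar $\chi=P_1^T(-D)(\Phi_1*\mathbf u_3)$, which you already granted, $P_1(D)\times P_1(D)\chi=\mathbf 0$ holds because constant-coefficient operators commute. This is the cleaner way to close your argument.

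\emph{Why your fallbacks do not work.}
\begin{itemize}
\item Restricting to homogeneous $P_1$ is unnecessary, and it would discard the $\bm\lambda+\nabla$ case the paper relies on.
\item Re-signing only the cross-product constraint would leave the lemma inconsistent. Under the literal $e^{-i\bm\omega\cdot\mathbf x}$ convention, the same mismatch also breaks the dot-product claim and the spatial formula. For $P_1=\bm\lambda+\nabla$, $\mathbb P$ projects onto $\ker(\bm\lambda+i\bm\omega)^T$. The transform of $(\bm\lambda-\nabla)\cdot\mathcal P^{\cdot}_{P_1(D)}\mathbf u_3$ would then be $(\bm\lambda-i\bm\omega)^T\mathbb P\widehat{\mathbf u_3}=-2i\,\bm\omega^T\mathbb P\widehat{\mathbf u_3}$, which is generically nonzero. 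The spatial formula would likewise produce the projector onto $\mathrm{span}\,P_1(i\bm\omega)$ rather than $\Pi$.
\end{itemize}
The right repair is to fix the convention $\widehat{D^\alpha f}=(-i\bm\omega)^\alpha\widehat f$ globally. All three claims then hold as stated whenever $g\neq0$ off the origin.
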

\begin{proof}
 Based on the definitions of $\mathcal{P}_{P_1(D)}^{\cdot},\ \mathcal{P}_{P_1(D)}^{\times}$, we have $$\mathfrak{F}\Bigg(\mathcal{P}_{P_1(D)}^{\cdot}\bold u_3 +\mathcal{P}_{P_1(D)}^{\times} \bold u_3\Bigg)(\bm\omega)=I_{3\times 3}\widehat{\bold u_3}(\bm\omega)=\widehat{\bold u_3}(\bm\omega).$$ Taking inverse Fourier transform on both sides of the above equation we can get $$\bold u_3=\mathcal{P}_{P_1(D)}^{\cdot}\bold u_3 +\mathcal{P}_{P_1(D)}^{\times} \bold u_3.$$ This together with the observation that $\widehat{\Phi}_1(\bm\omega)=\Bigg(P_1^*(-i\bm{\omega})P_1(-i\bm{\omega})\Bigg)^{-1}$  leads to $$\mathcal{P}_{P_1(D)}^{\cdot}\bold u_3=[P_1^T(-D)P_1(D)I_{3\times 3} -P_1(D)P_1^T(-D)](\Phi_1*\bold u_3)$$ and $$\mathcal{P}_{P_1(D)}^{\times}\bold u_3=P_1(D)P_1^T(-D)(\Phi_1*\bold u_3).$$
It is easy to verify $P_1(-D)\cdot (\mathcal{P}_{P_1(D)}^{\cdot}\bold u_3)=0$. Moreover, since each column of $P_1(D)P_1^T(-D)$ is a scalar multiple of $P_1(D)$ and $P_1(D)\times P_1(D)=\bold 0_3^T$, we have  $P_1(D)\times(\mathcal{P}_{P_1(D)}^{\times}\bold u_3)=\bold 0_3^T$. Finally, the orthogonality  property for the case $p=2$  follows from Parseval-Plancherel identity.
\end{proof}
\begin{remark}
 The above lemma includes the classical Helmholtz--Hodge decomposition as a special case with $P_1(D)=\nabla$ and $m=d=3$. Then the two projection operators are \cite{Fisher} $$\mathcal{P}_{P_1(D)}^{\cdot}\bold u_3=\mathcal{P}_{\text{div}}\bold u_3=\mathfrak{F}^{-1}\Bigg(\Bigg(I_{3\times 3}-\frac{\bm\omega\bm\omega^T}{\|\bm \omega\|^2}\Bigg) \widehat{\bold u_3}(\bm\omega)\Bigg), \quad \mathcal{P}_{P_1(D)}^{\times}\bold u_3=\mathcal{P}_{\text{curl}}\bold u_3=\mathfrak{F}^{-1}\Bigg(\Bigg(\frac{\bm\omega\bm\omega^T}{\|\bm \omega\|^2}\Bigg) \widehat{\bold u_3}(\bm\omega)\Bigg).$$ Correspondingly, the Helmholtz--Hodge decomposition reads $\bold u_3=\mathcal{P}_{\text{div}}\bold u_3+\mathcal{P}_{\text{curl}}\bold u_3$ with $\mathcal{P}_{\text{div}}\bold u_3$ and $\mathcal{P}_{\text{curl}}\bold u_3$ being divergence-free part $[\Delta I_{3\times 3}-\nabla\nabla^T](\phi_1*\bold u_3)$ and curl-free part $\nabla\nabla^T(\phi_1*\bold u_3)$, respectively.
 \end{remark}
The generalized Helmholtz--Hodge decomposition suggests extending the curl-free constraint to a general cross-product constraint
 $P_1(D)\times \bold u_3= \bold 0_3^T$, which may be regarded as the dual of the dot-product constraints \eqref{physicalinvariants} with $m=3$ and $n=1$. One verifies that the $3\times 3$ matrix-valued kernel
\begin{equation}\label{curlpd}
\mathbb{J}_{\ell,k,3,1,h}(\bold x):=(P_1(D)\Phi_1)* (P^T_1(-D)\psi_{\ell,k,h})(\bold x), \ \bold x\in \R^d,
\end{equation}  \textbf{analytically} satisfies the cross-product constraint \eqref{otherphysicalinvariants} column-wise. Furthermore, the algebraic identity $\bold v_3\times(\bold v_3\times\bold u_3) = \bold v_3(\bold v_3^T\bold u_3) - |\bold v_3|^2\bold u_3$ gives:
\begin{equation*}
P_1(D)\times(P_1(D)\times \bold u_3) = P_1(D)\,P_1^T(D)\bold u_3 - [P_1^T(D)P_1(D)]\bold u_3.
\end{equation*} Thus, if we further assume  $P_1(-D)=(-1)^{r_1}P_1(D)$, then we have   the general identity
 \begin{equation}\label{generalidentity}
P_1^T(-D)P_1(D)I_{3\times 3}+(-1)^{r_1} P_1(D)\times P_1(D)\times= P_1(D)P_1^T(-D).
\end{equation}  We emphasize that this contains the classical identity $\Delta I_{3\times 3}+\nabla\times \nabla\times=\nabla \nabla^T$ as the special case $P_1(D)=\nabla$. More importantly, it enables us to construct the convolution sequence $\mathbb{J}_{\ell,k,3,1,h}*\bold u_3$ and to derive its  error estimates in the following lemma.
 \begin{lemma}\label{crosprojectionconvolutionerror}
 Let $\bold u_3 \in (W_p^r(\R^d))^3$ be a vector-valued function having the generalized Helmholtz--Hodge decomposition in Lemma \ref{ghh}. Let
$\mathbb{J}_{\ell,k,3,1,h}$ be a matrix-valued kernel constructed in Equation \eqref{curlpd} with $P_1(-D)=(-1)^{r_1}P_1(D)$. Then we have
        \begin{equation*}
        \|\mathbb{J}_{\ell,k,3,1,h}*(D^{\alpha}\bold u_3)-D^{\alpha}(\mathcal{P}_{P_1(D)}^{\times}\bold u_3)\|_{p}=\mathcal{O}(h^{2k})
    \end{equation*}
     hold true for any $0\leq |\alpha|< \min\{2k, r-2k\}$ with $r-2k>d/2$.
 \end{lemma}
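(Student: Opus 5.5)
The plan is to mirror the proof of Lemma \ref{matrixconvolution}: move the differential operators in $\mathbb{J}_{\ell,k,3,1,h}$ off the kernel and onto $\bold u_3$, so the convolution becomes the scalar spline $\psi_{\ell,k,h}$ acting on a fixed target. Then Lemma \ref{convolution} gives the rate. Set $\bold v_3^{\alpha}=D^{\alpha}\bold u_3$.

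\emph{Step 1: transfer the operators.} Because $h$ is built into $\psi_{\ell,k,h}$ (Remark \ref{remarkonscale}), all factors are undilated and constant-coefficient operators can be moved freely between convolution factors. This gives
\[
\mathbb{J}_{\ell,k,3,1,h}*\bold v_3^{\alpha}=(P_1(D)\Phi_1)*(P_1^T(-D)\psi_{\ell,k,h})*\bold v_3^{\alpha}=\psi_{\ell,k,h}*\big[P_1(D)P_1^T(-D)(\Phi_1*\bold v_3^{\alpha})\big].
\]
By the representation in Lemma \ref{ghh}, the bracket equals $\mathcal{P}_{P_1(D)}^{\times}\bold v_3^{\alpha}$. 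Since $D^{\alpha}$ commutes with the Fourier multiplier, this is $D^{\alpha}(\mathcal{P}_{P_1(D)}^{\times}\bold u_3)$. Equivalently, on the Fourier side,
\[
\widehat{\mathbb{J}}=P_1G^{-1}P_1^{*}\,\widehat{\psi}_{\ell,k,h},\qquad \widehat{\mathbb{J}}\,\widehat{\bold v_3^{\alpha}}=\widehat{\psi}_{\ell,k,h}\,\mathfrak{F}\big(\mathcal{P}^{\times}_{P_1(D)}\bold v_3^{\alpha}\big),
\]
which I would use as the rigorous version of the interchange. The identity \eqref{generalidentity} and the assumption $P_1(-D)=(-1)^{r_1}P_1(D)$ give an alternative route. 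They rewrite $P_1(D)P_1^T(-D)(\Phi_1*\bold v)$ as $\bold v+(-1)^{r_1}P_1(D)\times P_1(D)\times(\Phi_1*\bold v)$, using $P_1^T(-D)P_1(D)\Phi_1=\delta$. This expresses the target as $\bold u_3$ minus a term that is cross-product-free on the dot side, and is consistent with \eqref{generhh}.

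\emph{Step 2: reduce to the scalar lemma.} We now have
\[
\|\mathbb{J}*\bold v_3^{\alpha}-D^{\alpha}\mathcal{P}^{\times}\bold u_3\|_p=\sum_{j=1}^3\|\psi_{\ell,k,h}*w_j-w_j\|_p,\qquad \bold w=D^{\alpha}\mathcal{P}^{\times}_{P_1(D)}\bold u_3.
\]
Applying Lemma \ref{convolution} to $\bold w=D^{\alpha}(\mathcal{P}^{\times}\bold u_3)$ then gives $\mathcal{O}(h^{2k})$ in the range $|\alpha|<\min\{2k,r-2k\}$.

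\emph{Main obstacle.} The scalar lemma requires that $\mathcal{P}^{\times}_{P_1(D)}\bold u_3$ lie in $(W_p^r)^3$. The projection is a degree-zero multiplier $P_1G^{-1}P_1^*$, which is bounded on $L^p$ only for $1<p<\infty$ (via Mikhlin, as in Remark \ref{mikhlinholds}) and trivially for $p=2$ by the contraction property of Proposition \ref{propofp}. For $p=\infty$ or $p=1$, I would instead estimate the error directly in Fourier form. The integrand is $(\widehat{\psi}_{\ell,k,h}-1)$ times the projected $\widehat{\bold v_3^{\alpha}}$. The Strang--Fix conditions \eqref{Strangfix} give $|\widehat{\psi}_{\ell,k,h}(\bm\omega)-1|\le C\|h\bm\omega\|^{2k}$, and the projector is discarded by the contraction \eqref{contraction}. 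This yields the bound $h^{2k}\int\|\bm\omega\|^{2k+|\alpha|}|\widehat{\bold u_3}|\,d\bm\omega$, which is finite under $r-2k>d/2$. I expect justifying this commutation and regularity step, rather than the algebra, to be the delicate point.
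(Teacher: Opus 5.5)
Your Steps 1--2 are the paper's argument. The paper also establishes, on the Fourier side, that $\mathbb{J}_{\ell,k,3,1,h}*(D^{\alpha}\bold u_3)=\psi_{\ell,k,h}*D^{\alpha}(\mathcal{P}^{\times}_{P_1(D)}\bold u_3)$, and then applies Lemma~\ref{convolution} componentwise. The only difference is bookkeeping. The paper splits $\bold u_3=\mathcal{P}^{\cdot}_{P_1(D)}\bold u_3+\mathcal{P}^{\times}_{P_1(D)}\bold u_3$ and checks, via \eqref{generalidentity}, that $\widehat{\mathbb{J}}_{\ell,k,3,1,h}$ kills the first piece and acts as multiplication by $\widehat{\psi}_{\ell,k,h}$ on the second. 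You instead read off $\widehat{\mathbb{J}}_{\ell,k,3,1,h}=(I_{3\times 3}-\mathbb{P})\widehat{\psi}_{\ell,k,h}$ directly. Your route is shorter, and it shows that the hypothesis $P_1(-D)=(-1)^{r_1}P_1(D)$ plays no real role in this lemma. In the paper that hypothesis enters only through \eqref{generalidentity}, which there merely reproduces the idempotency of $P_1G^{-1}P_1^{*}$.

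Your ``main obstacle'' paragraph identifies something the paper passes over silently. Lemma~\ref{convolution} is applied to the components of $\mathcal{P}^{\times}_{P_1(D)}\bold u_3$ without checking that they lie in $W_p^r(\R^d)$. This is automatic for $p=2$ (contraction plus Plancherel) and holds for $1<p<\infty$ under \eqref{mikhlin}. It fails in general for $p\in\{1,\infty\}$ when the symbol is homogeneous of degree zero.

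Your proposed remedy does not close this gap, however.
\begin{itemize}
\item For $p=1$: a bound by $h^{2k}\int\|\bm\omega\|^{2k+|\alpha|}|\widehat{\bold u_3}(\bm\omega)|\,d\bm\omega$ controls only the sup norm, so it says nothing about $p=1$.
\item For $p=\infty$: the argument presupposes that $\widehat{\bold u_3}$ is a function with this weighted integral finite. That follows from an $L^2$-Sobolev assumption with $r-2k-|\alpha|>d/2$, not from $\bold u_3\in(W_\infty^r(\R^d))^3$ with $r-2k>d/2$. In the homogeneous case a constant field already breaks it: $\widehat{\bold u_3}$ is a multiple of $\delta$ at the origin, where the projector is discontinuous, so $\mathcal{P}^{\times}_{P_1(D)}\bold u_3$ is not even well defined.
\end{itemize}
So for $p\in\{1,\infty\}$, neither your argument nor the paper's proves the stated estimate under the stated hypotheses. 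For $p=2$, and for $1<p<\infty$ under \eqref{mikhlin}, your Steps 1--2 are complete.
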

 \begin{proof}
 Note that $$\mathbb{J}_{\ell,k,3,1,h}*(D^{\alpha}\bold u_3)=\mathbb{J}_{\ell,k,3,1,h}*D^{\alpha}(\mathcal{P}_{P_1(D)}^{\cdot}\bold u_3+\mathcal{P}_{P_1(D)}^{\times} \bold u_3).$$ This together with the definition of $\mathcal{P}_{P_1(D)}^{\cdot}\bold u_3$ in   Lemma \ref{ghh} leads to
  \begin{equation*}
 \begin{split}
 &\widehat{\mathbb{J}}_{\ell,k,3,1,h}(\bm\omega)(-i\bm\omega)^{\alpha}\mathfrak{F}(\mathcal{P}_{P_1(D)}^{\cdot} \bold u_3)(\bm\omega)\\
 &=(-i\bm\omega)^{\alpha}\frac{P_1(-i\bm\omega)P_1^*(-i\bm\omega)P_1^*(-i\bm\omega)P_1(-i\bm\omega)I_{3\times3}-P_1(-i\bm\omega)P_1^*(-i\bm\omega)P_1(-i\bm\omega)P_1^*(-i\bm\omega)}{(P_1^*(-i\bm\omega)P_1(-i\bm\omega))^2}\widehat{\bold u_3}(\bm\omega)\widehat{\psi}_{\ell,k,h}(\bm\omega)\\
 &=(-i\bm\omega)^{\alpha}\frac{P_1(-i\bm\omega)P_1^*(-i\bm\omega)I_{3\times3}-P_1(-i\bm\omega)P_1^*(-i\bm\omega)}{(P_1^*(-i\bm\omega)P_1(-i\bm\omega))}\widehat{\bold u_3}(\bm\omega)\widehat{\psi}_{\ell,k,h}(\bm\omega)\\
  &=(-i\bm\omega)^{\alpha}\frac{P_1(-i\bm\omega)P_1^*(-i\bm\omega)-P_1(-i\bm\omega)P_1^*(-i\bm\omega)}{(P_1^*(-i\bm\omega)P_1(-i\bm\omega))}\widehat{\bold u_3}(\bm\omega)\widehat{\psi}_{\ell,k,h}(\bm\omega)\\
 &=(-i\bm\omega)^{\alpha}0_{3\times 3}\widehat{\bold u_3}(\bm\omega)\widehat{\psi}_{\ell,k,h}(\bm\omega)=\bold 0_3^T,
 \end{split}
 \end{equation*}
  which in turn implies
  $\mathbb{J}_{\ell,k,3,1,h}*(D^{\alpha}(\mathcal{P}_{P_1(D)}^{\cdot}\bold u_3))=\bold 0_3^T$. Furthermore, since  Identity \eqref{generalidentity} holds with $P_1(-D)=(-1)^{r_1}P_1(D)$, we have
  \begin{equation*}
 \begin{split}
 &\widehat{\mathbb{J}}_{\ell,k,3,1,h}(\bm\omega)(-i\bm\omega)^{\alpha}\mathfrak{F}(\mathcal{P}_{P_1(D)}^{\times} \bold u_3)(\bm\omega)\\
 &=(-i\bm\omega)^{\alpha}\frac{P_1^*(-i\bm\omega)P_1(-i\bm\omega)I_{3\times3}+(-1)^{r_1}P_1(-i\bm\omega)\times P_1(-i\bm\omega)\times}{P_1^*(-i\bm\omega)P_1(-i\bm\omega)}\frac{P_1(-i\bm\omega)P_1^*(-i\bm\omega)}{P_1^*(-i\bm\omega)P_1(-i\bm\omega)}\widehat{\bold u_3}(\bm\omega)\widehat{\psi}_{\ell,k,h}(\bm\omega)\\
 &=(-i\bm\omega)^{\alpha}\frac{(P_1^*(-i\bm\omega)P_1(-i\bm\omega)I_{3\times3})(P_1(-i\bm\omega)P_1^*(-i\bm\omega))}{(P_1^*(-i\bm\omega)P_1(-i\bm\omega))^2}\widehat{\bold u_3}(\bm\omega)\widehat{\psi}_{\ell,k,h}(\bm\omega)\\
  &=(-i\bm\omega)^{\alpha}\frac{P_1(-i\bm\omega)P_1^*(-i\bm\omega)}{P_1^*(-i\bm\omega)P_1(-i\bm\omega)}\widehat{\bold u_3}(\bm\omega)\widehat{\psi}_{\ell,k,h}(\bm\omega)\\
 &=(-i\bm\omega)^{\alpha}\mathfrak{F}(\mathcal{P}_{P_1(D)}^{\times} \bold u_3)(\bm\omega)\widehat{\psi}_{\ell,k,h}(\bm\omega).
 \end{split}
 \end{equation*}
 Taking inverse Fourier transform on both sides of the above equation yields
 $$\mathbb{J}_{\ell,k,3,1,h}*(D^{\alpha}(\mathcal{P}_{P_1(D)}^{\times} \bold u_3))=\psi_{\ell,k,h}*D^{\alpha}(\mathcal{P}_{P_1(D)}^{\times} \bold u_3).$$ Finally, combining the above two parts, we have $$\mathbb{J}_{\ell,k,3,1,h}*(D^{\alpha}\bold u_3)=\psi_{\ell,k,h}*D^{\alpha}(\mathcal{P}_{P_1(D)}^{\times} \bold u_3),$$
 which together with Lemma \ref{convolution} leads to
$     \|\mathbb{J}_{\ell,k,3,1,h}*(D^{\alpha}\bold u_3)-D^{\alpha}(\mathcal{P}_{P_1(D)}^{\times}\bold u_3)\|_{p}=\mathcal{O}(h^{2k})$.
 \end{proof}

Similarly, we can show that the convolution sequence  $\mathbb{K}_{\ell,k,3,1,h}*(D^{\alpha}\bold u_3)$ converges to $D^{\alpha}(\mathcal{P}_{P_1(D)}^{\cdot}\bold u_3)$ and derive simultaneous error estimates as follows.
\begin{lemma}\label{dotprojectionconvolutionerror}
 Let $\bold u_3 \in (W_p^r(\R^d))^3$ be a vector-valued function having the generalized Helmholtz--Hodge decomposition in Lemma \ref{ghh}. Let $\mathbb{K}_{\ell,k,3,1,h}$ be defined in Equation \eqref{matrixkernel} with $n=1$ and $m=3$.
 Then we have
     \begin{equation*}
        \|\mathbb{K}_{\ell,k,3,1,h}*(D^{\alpha}\bold u_3)-D^{\alpha}(\mathcal{P}_{P_1(D)}^{\cdot}\bold u_3)\|_{p}=\mathcal{O}(h^{2k})
    \end{equation*}
     hold true for any $0\leq |\alpha|< \min\{2k, r-2k\}$ with $r-2k>d/2$.
 \end{lemma}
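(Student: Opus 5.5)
The plan mirrors the proof of Lemma \ref{crosprojectionconvolutionerror}, and is in fact simpler because no algebraic identity such as \eqref{generalidentity} is needed. We work entirely in the frequency domain with $\widehat{\mathbb{K}}_{\ell,k,3,1,h}=\mathbb{P}\,\widehat{\psi}_{\ell,k,h}$. Here
$$\mathbb{P}(\bm\omega)=I_{3\times3}-P_1(-i\bm\omega)\big(P_1^*(-i\bm\omega)P_1(-i\bm\omega)\big)^{-1}P_1^*(-i\bm\omega),$$
which is exactly the multiplier defining $\mathcal{P}^{\cdot}_{P_1(D)}$ in Lemma \ref{ghh}. Write $\bold u_3=\mathcal{P}^{\cdot}_{P_1(D)}\bold u_3+\mathcal{P}^{\times}_{P_1(D)}\bold u_3$ by \eqref{generhh}. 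By linearity of differentiation and convolution, it suffices to compute $\mathbb{K}_{\ell,k,3,1,h}$ convolved with $D^{\alpha}$ of each component separately.

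\emph{Curl-type part.} We have $\mathfrak{F}(D^{\alpha}\mathcal{P}^{\times}_{P_1(D)}\bold u_3)=(i\bm\omega)^{\alpha}(I-\mathbb{P})\widehat{\bold u_3}$. By Proposition \ref{propofp}, $\mathbb{P}$ is an orthogonal projector, so $\mathbb{P}(I-\mathbb{P})=0$ pointwise for $\bm\omega\neq\bold 0$. Hence the product $\widehat{\mathbb{K}}_{\ell,k,3,1,h}\,\mathfrak{F}(D^{\alpha}\mathcal{P}^{\times}_{P_1(D)}\bold u_3)$ vanishes almost everywhere, and therefore $\mathbb{K}_{\ell,k,3,1,h}*D^{\alpha}(\mathcal{P}^{\times}_{P_1(D)}\bold u_3)=\bold 0$. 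One could equally argue in the spatial domain as in \eqref{interchange}. Since each column of $P_1(D)P_1^T(-D)$ is a multiple of $P_1(D)$, the $\times$-part lies in the range of $P_1$, which $\mathbb{P}$ annihilates.

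\emph{Dot-type part.} Idempotency gives $\mathbb{P}^2=\mathbb{P}$, so
$$\widehat{\mathbb{K}}_{\ell,k,3,1,h}\,(i\bm\omega)^{\alpha}\mathbb{P}\widehat{\bold u_3}=\widehat{\psi}_{\ell,k,h}\,(i\bm\omega)^{\alpha}\mathbb{P}\widehat{\bold u_3}.$$
Taking inverse Fourier transforms, this reads $\mathbb{K}_{\ell,k,3,1,h}*D^{\alpha}(\mathcal{P}^{\cdot}_{P_1(D)}\bold u_3)=\psi_{\ell,k,h}*D^{\alpha}(\mathcal{P}^{\cdot}_{P_1(D)}\bold u_3)$. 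Equivalently, one may note that $\mathcal{P}^{\cdot}_{P_1(D)}\bold u_3$ satisfies \eqref{physicalinvariants} by Lemma \ref{ghh} and invoke the computation in the proof of Lemma \ref{matrixconvolution} directly. Combining the two parts gives
$$\mathbb{K}_{\ell,k,3,1,h}*(D^{\alpha}\bold u_3)-D^{\alpha}(\mathcal{P}^{\cdot}_{P_1(D)}\bold u_3)=\psi_{\ell,k,h}*D^{\alpha}\bold w-D^{\alpha}\bold w,\qquad \bold w:=\mathcal{P}^{\cdot}_{P_1(D)}\bold u_3,$$
taken componentwise. Lemma \ref{convolution}, applied to each of the three components of $\bold w$, then yields $\mathcal{O}(h^{2k})$.

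The main obstacle is justifying that last application: we need $\bold w\in(W_p^r(\R^d))^3$, i.e.\ that $\mathcal{P}^{\cdot}_{P_1(D)}$ preserves the Sobolev class. This is implicitly assumed in Lemma \ref{crosprojectionconvolutionerror}, and I would adopt the same standing assumption. For $p=2$ it is immediate from the contraction property \eqref{contraction} and Plancherel, since $\|(1+\|\bm\omega\|^2)^{r/2}\mathbb{P}\widehat{\bold u_3}\|_2\le\|(1+\|\bm\omega\|^2)^{r/2}\widehat{\bold u_3}\|_2$. For $1<p<\infty$ I would invoke the Mikhlin condition \eqref{mikhlin}, as in Theorem \ref{errorestimateforderiLp} and Remark \ref{mikhlinholds}; because $\mathbb{P}$ commutes with $D^{\beta}$, boundedness on $L^p$ transfers to $W_p^r$. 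A secondary technical point is that the pointwise Fourier identities hold only for $\bm\omega\neq\bold 0$, which is a null set and so harmless in the distributional sense.
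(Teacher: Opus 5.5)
Your proposal is correct and follows essentially the same route as the paper's proof. Both arguments write $\widehat{\mathbb{K}}_{\ell,k,3,1,h}=\mathbb{P}\widehat{\psi}_{\ell,k,h}$, annihilate the $\mathcal{P}^{\times}_{P_1(D)}$-part via $\mathbb{P}(I_{3\times 3}-\mathbb{P})=0$, reduce the $\mathcal{P}^{\cdot}_{P_1(D)}$-part to $\psi_{\ell,k,h}*D^{\alpha}(\mathcal{P}^{\cdot}_{P_1(D)}\bold u_3)$ via $\mathbb{P}^2=\mathbb{P}$, and conclude with Lemma \ref{convolution} applied componentwise. Your remark that this last step needs $\mathcal{P}^{\cdot}_{P_1(D)}\bold u_3\in(W_p^r(\R^d))^3$ makes explicit a point the paper passes over: this holds for $p=2$ by contraction, holds for $1<p<\infty$ under the Mikhlin condition \eqref{mikhlin}, and is only a standing assumption at $p=1,\infty$.
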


\begin{proof}
The argument parallels that of Lemma \ref{crosprojectionconvolutionerror} and can be phrased compactly in terms of the projector $\mathbb{P}$. For $n=1$ and $m=3$, Lemma \ref{ghh} states that $\mathfrak{F}(\mathcal{P}_{P_1(D)}^{\cdot}\bold u_3)=\mathbb{P}\widehat{\bold u_3}$ and $\mathfrak{F}(\mathcal{P}_{P_1(D)}^{\times}\bold u_3)=(I_{3\times 3}-\mathbb{P})\widehat{\bold u_3}$, while $\widehat{\mathbb{K}}_{\ell,k,3,1,h}=\mathbb{P}\widehat{\psi}_{\ell,k,h}$. Since $\mathbb{P}$ is an orthogonal projector by Proposition \ref{propofp}, it is idempotent, whence
$$\widehat{\mathbb{K}}_{\ell,k,3,1,h}(\bm\omega)(-i\bm\omega)^{\alpha}\mathfrak{F}(\mathcal{P}_{P_1(D)}^{\times}\bold u_3)(\bm\omega)=(-i\bm\omega)^{\alpha}\underbrace{\mathbb{P}(\bm\omega)(I_{3\times 3}-\mathbb{P}(\bm\omega))}_{=\,0_{3\times 3}}\widehat{\bold u_3}(\bm\omega)\widehat{\psi}_{\ell,k,h}(\bm\omega)=\bold 0_3$$
and
$$\widehat{\mathbb{K}}_{\ell,k,3,1,h}(\bm\omega)(-i\bm\omega)^{\alpha}\mathfrak{F}(\mathcal{P}_{P_1(D)}^{\cdot}\bold u_3)(\bm\omega)=(-i\bm\omega)^{\alpha}\mathbb{P}^2(\bm\omega)\widehat{\bold u_3}(\bm\omega)\widehat{\psi}_{\ell,k,h}(\bm\omega)=\mathfrak{F}\big(D^{\alpha}(\mathcal{P}_{P_1(D)}^{\cdot}\bold u_3)\big)(\bm\omega)\widehat{\psi}_{\ell,k,h}(\bm\omega).$$
Adding these two identities and taking inverse Fourier transforms gives $\mathbb{K}_{\ell,k,3,1,h}*(D^{\alpha}\bold u_3)=\psi_{\ell,k,h}*D^{\alpha}(\mathcal{P}_{P_1(D)}^{\cdot}\bold u_3)$, and Lemma \ref{convolution}, applied componentwise, yields the stated rate $\mathcal{O}(h^{2k})$.
\end{proof}
The last two lemmas show that the convolution sequences $\mathbb{J}_{\ell,k,3,1,h}*\bold u_3$ and $\mathbb{K}_{\ell,k,3,1,h}*\bold u_3$, together with their derivatives, approximate the corresponding components of the generalized Helmholtz--Hodge decomposition of $\bold u_3$ and their derivatives. Their semi-discrete counterparts can therefore be used to compute this decomposition numerically for a general smooth vector-valued function $\bold u_3$.

Let $\{(\bold jh,\bold u_3(\bold jh))\}_{\bold j\in \mathbb{Z}^d}$ be sampling data of a general smooth vector-valued function $\bold u_3$. We construct two schemes
\begin{equation}\label{genquasiinterpolationofourpaper}
Q_{\mathbb{K}_{\ell,k,3,1,h}}\bold u_3(\bold x):=h^d\sum_{\bold j\in \mathbb{Z}^d}\mathbb{K}_{\ell,k,3,1,h}(\bold x-\bold jh)\bold u_3(\bold jh),\ \bold x\in \R^d,
\end{equation}
 and \begin{equation}\label{gencrsooquasiinterpolationofourpaper}
Q_{\mathbb{J}_{\ell,k,3,1,h}}\bold u_3(\bold x):=h^d\sum_{\bold j\in \mathbb{Z}^d}\mathbb{J}_{\ell,k,3,1,h}(\bold x-\bold jh)\bold u_3(\bold jh),\ \bold x\in \R^d.
\end{equation} It is easy to  show that they satisfy dot-product constraint \eqref{physicalinvariants} and cross-product constraint \eqref{otherphysicalinvariants}, respectively. Moreover, they approximate corresponding parts of generalized Helmholtz--Hodge decomposition of $\bold u_3$ and their $L^p$-error estimates can be derived in the following theorem.
\begin{theorem}\label{errorestimateforderi2}
Let $\bold u_3\in (W_p^r(\R^d))^3$ be a vector-valued function having generalized Helmholtz--Hodge decomposition in Lemma \ref{ghh}.  Let $Q_{\mathbb{K}_{\ell,k,3,1,h}}\bold u_3$  and  $Q_{\mathbb{J}_{\ell,k,3,1,h}}\bold u_3$ be defined as in Equation \eqref{genquasiinterpolationofourpaper} and  \eqref{gencrsooquasiinterpolationofourpaper}. Assume further that $P_1(-D)=(-1)^{r_1}P_1(D)$ and $(P_1^*(-i\bm\omega)P_1(-i\bm\omega))^{-1}$ is well defined for $\bm \omega\neq \bold 0$. Then we have
$$\|D^{\alpha}(Q_{\mathbb{K}_{\ell,k,3,1,h}}\bold u_3)-D^{\alpha}(\mathcal{P}_{P_1(D)}^{\cdot}\bold u_3)\|_{p}=\mathcal{O}(h^{2k-|\alpha|}),$$
$$\|D^{\alpha}(Q_{\mathbb{J}_{\ell,k,3,1,h}}\bold u_3)-D^{\alpha}(\mathcal{P}_{P_1(D)}^{\times}\bold u_3)\|_{p}=\mathcal{O}(h^{2k-|\alpha|}),$$  for  any $0\leq |\alpha|< \min\{2k, r-2k,2\ell-d-1\}$ with $r-2k>d/2$.
\end{theorem}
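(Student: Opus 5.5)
We will follow the bias--variance template of Theorem \ref{errorestimateforderi}, now with the target replaced by the two Helmholtz--Hodge components. For the dot-product part we split
\begin{equation*}
D^{\alpha}(Q_{\mathbb{K}_{\ell,k,3,1,h}}\bold u_3)-D^{\alpha}(\mathcal{P}_{P_1(D)}^{\cdot}\bold u_3)
=\Big[D^{\alpha}(Q_{\mathbb{K}_{\ell,k,3,1,h}}\bold u_3)-\mathbb{K}_{\ell,k,3,1,h}*(D^{\alpha}\bold u_3)\Big]+\Big[\mathbb{K}_{\ell,k,3,1,h}*(D^{\alpha}\bold u_3)-D^{\alpha}(\mathcal{P}_{P_1(D)}^{\cdot}\bold u_3)\Big].
\end{equation*}
The cross-product part is split in the same way with $\mathbb{J}_{\ell,k,3,1,h}$ in place of $\mathbb{K}_{\ell,k,3,1,h}$. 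The bias terms are exactly those of Lemma \ref{dotprojectionconvolutionerror} and Lemma \ref{crosprojectionconvolutionerror}. The latter uses the hypothesis $P_1(-D)=(-1)^{r_1}P_1(D)$ through Identity \eqref{generalidentity}. Both lemmas give $\mathcal{O}(h^{2k})$, which is dominated by $h^{2k-|\alpha|}$. So the proof reduces to bounding the two variance (aliasing) terms.

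For the variance terms we note that in the frequency domain $\widehat{\mathbb{K}}_{\ell,k,3,1,h}=\mathbb{P}\widehat{\psi}_{\ell,k,h}$ and $\widehat{\mathbb{J}}_{\ell,k,3,1,h}=(I_{3\times3}-\mathbb{P})\widehat{\psi}_{\ell,k,h}$. The second identity follows from \eqref{curlpd}, since $\widehat{\Phi}_1=(P_1^*P_1)^{-1}$. Both $\mathbb{P}$ and $I_{3\times3}-\mathbb{P}$ are orthogonal projectors by Proposition \ref{propofp}, hence pointwise contractions. We then repeat the proof of Theorem \ref{errorestimateforderi} verbatim, with $\bold u_m$ replaced by the unconstrained $\bold u_3$. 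That proof never used the constraint in the variance part, only the contraction property. The steps are as follows. First, Poisson summation expresses the variance as the sum of nonzero aliases \eqref{aliasrep}. Second, the projector is dropped pointwise. Third, we rescale to $\widehat{\psi_{\ell,k}}(h\bm\omega+2\bold j\pi)$ and split into $\Omega_{\text{in}}$ and $\Omega_{\text{out}}$. On $\Omega_{\text{in}}$ we use the Taylor expansion together with the Strang--Fix conditions \eqref{Strangfix}. On $\Omega_{\text{out}}$ we use the trivial bound $1<h^{2k}\|\bm\omega\|^{2k}$. Fourth, Rabut's bound \eqref{rabutsup} controls the lattice sums. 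This yields $\mathcal{O}(h^{2k-|\alpha|})$ for $p=2$ via Plancherel and for $p=\infty$ via the $L^1$ bound on $\|\bm\omega\|^{2k}\widehat{\bold u_3}$, which is finite because $r-2k>d/2$.

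For $1<p<\infty$ we would instead use the factorization argument of Theorem \ref{errorestimateforderiLp}. We have $Q_{\mathbb{K}_{\ell,k,3,1,h}}\bold u_3=T_{\mathbb{P}}(Q_{\ell,h}\bold u_3)$ and $Q_{\mathbb{J}_{\ell,k,3,1,h}}\bold u_3=T_{I-\mathbb{P}}(Q_{\ell,h}\bold u_3)$. Moreover $\mathcal{P}_{P_1(D)}^{\cdot}\bold u_3=T_{\mathbb{P}}\bold u_3$ and $\mathcal{P}_{P_1(D)}^{\times}\bold u_3=T_{I-\mathbb{P}}\bold u_3$, and all of these operators commute with $D^{\alpha}$. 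Hence each error equals a projector multiplier applied to the scalar error $D^{\alpha}(Q_{\ell,h}\bold u_3)-D^{\alpha}\bold u_3$. Under the Mikhlin condition \eqref{mikhlin}, as in Remark \ref{mikhlinholds}, we can then conclude with \eqref{simultaneousofpol}.

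The main obstacle is that the statement asserts the bound for a generic $p$ without an explicit Mikhlin hypothesis. I would therefore state clearly that the unconditional part covers $p=2,\infty$, and that $1<p<\infty$ requires \eqref{mikhlin}. I would also justify the Poisson summation and the term-by-term differentiation for $\mathbb{J}_{\ell,k,3,1,h}$. Its entries are only $\mathcal{O}(\|\bold x\|^{-d})$ at infinity when $I-\mathbb{P}$ is homogeneous of degree zero (Remark \ref{L1obstruction}). I would handle this as the paper does for $L^2$: work directly on the Fourier side, where absolute integrability of the alias sum follows from \eqref{rabutsup} and the decay of $\widehat{\bold u_3}$.
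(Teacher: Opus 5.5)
Your proposal follows the paper's proof. The bias terms come from Lemmas \ref{dotprojectionconvolutionerror} and \ref{crosprojectionconvolutionerror}. The variance terms come from the aliasing argument of Theorem \ref{errorestimateforderi}, which indeed uses only that $\mathbb{P}$ and $I_{3\times3}-\mathbb{P}$ are pointwise contractions, never the constraint. The case $1<p<\infty$ uses the multiplier factorization of Theorem \ref{errorestimateforderiLp}. Your remark that Poisson summation must be justified on the Fourier side is also correct, since $\mathbb{K}$ and $\mathbb{J}$ decay only like $\|\mathbf x\|^{-d}$ when $\mathbb{P}$ is zero-homogeneous.

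The one real divergence is scope, and there you are right and the paper is not. The paper closes the remaining exponents by asserting that invertibility of $P_1^*P_1$ for $\bm\omega\neq\mathbf 0$ suffices for the hypotheses of Theorems \ref{errorestimateforderiLp}--\ref{errorestimateforderiL1}, and that implication is false. Condition \eqref{mikhlin} does follow when $P_1$ is homogeneous, because $\mathbb{P}$ is then zero-homogeneous and smooth off the origin, but not in general. For $d=2$ and $P_1(D)=(\partial_{x_1},\partial_{x_2}^3,0)^T$, both hypotheses of the theorem hold ($r_1=3$, $G=\omega_1^2+\omega_2^6$). Yet $\partial_{\omega_1}\mathbb{P}_{11}=-2\omega_1\omega_2^6/(\omega_1^2+\omega_2^6)^2$ equals $-1/(2\omega_2^3)$ on the curve $\omega_1=\omega_2^3$, which violates \eqref{mikhlin} as $\omega_2\to0$.

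What you must add is an explicit exclusion of $p=1$, which your proposal passes over in silence. Theorem \ref{errorestimateforderiL1} requires $G(\mathbf 0)$ to be invertible. This fails for $\nabla$ and for the operator of Subsection \ref{sec:genhodge}, and in that case the claim is actually false for $|\alpha|=0$. To see this, set $\mu_h=h^d\sum_{\mathbf j}\mathbf u_3(\mathbf jh)\delta_{\mathbf jh}$. The error then has Fourier transform $\mathbb{P}(\bm\omega)\big[\widehat{\psi}_{\ell,k,h}(\bm\omega)\widehat{\mu_h}(\bm\omega)-\widehat{\mathbf u_3}(\bm\omega)\big]$. Take $\mathbf u_3$ to be a Gaussian times a constant vector. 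The bracket is then continuous, and at the origin it equals, up to normalization, the nonzero aliasing vector $\sum_{\mathbf j\neq\mathbf 0}\widehat{\mathbf u_3}(2\pi\mathbf j/h)$. The product therefore has direction-dependent limits at $\bm\omega=\mathbf 0$ and cannot be continuous, so the error is not in $L^1$ at all. This is the obstruction of Remark \ref{L1obstruction}, now appearing in the error itself.

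Two smaller points.
\begin{enumerate}
\item Your factorization already settles $p=2$ without any Mikhlin hypothesis, since $T_{\mathbb{P}}$ and $T_{I_{3\times3}-\mathbb{P}}$ are contractions on $L^2$ by Plancherel, and it absorbs bias and variance in one step. The aliasing argument is then needed only for $p=\infty$. There, as in the paper, finiteness of $\int\|\bm\omega\|^{2k}|\widehat{\mathbf u_3}(\bm\omega)|\,d\bm\omega$ really uses $\mathbf u_3\in(W_2^r)^3$. The bias should also be estimated on the Fourier side, because Lemma \ref{convolution} would need $\mathcal{P}^{\cdot}_{P_1(D)}\mathbf u_3\in(W_\infty^r)^3$, which a zero-order multiplier does not guarantee.
\item Once you write $\widehat{\mathbb{J}}_{\ell,k,3,1,h}=(I_{3\times3}-\mathbb{P})\widehat{\psi}_{\ell,k,h}$, the cross-product bias follows from idempotence of $I_{3\times3}-\mathbb{P}$ exactly as in Lemma \ref{dotprojectionconvolutionerror}. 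So the hypothesis $P_1(-D)=(-1)^{r_1}P_1(D)$ and Identity \eqref{generalidentity} are not actually needed.
\end{enumerate}
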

\begin{proof}
Lemma \ref{dotprojectionconvolutionerror} and Lemma \ref{crosprojectionconvolutionerror} supply the bias parts of the two estimates. The variance parts follow from the argument used in the proof of Theorems \ref{errorestimateforderi}--\ref{errorestimateforderiL1}, since the condition that $(P_1^*(-i\bm\omega)P_1(-i\bm\omega))^{-1}$ is well defined for $\bm \omega\neq \bold 0$ is sufficient for conditions in  Theorems \ref{errorestimateforderiLp}--\ref{errorestimateforderiL1}. Therefore, we have $\|D^{\alpha}(Q_{\mathbb{K}_{\ell,k,3,1,h}}\bold u_3)-D^{\alpha}(\mathcal{P}_{P_1(D)}^{\cdot}\bold u_3)\|_{p}=\mathcal{O}(h^{2k-|\alpha|})$.  For the scheme $Q_{\mathbb{J}_{\ell,k,3,1,h}}\bold u_3$ one uses $I_{3\times 3}-\mathbb{P}$ in place of $\mathbb{P}$, which is again an orthogonal projector and hence $\|I_{3\times 3}-\mathbb{P}\|\leq 1$ is also bounded above.
\end{proof}
This theorem shows that the quasi-interpolants $Q_{\mathbb{K}_{\ell,k,3,1,h}}\bold u_3$ and $Q_{\mathbb{J}_{\ell,k,3,1,h}}\bold u_3$ approximate the respective components of the generalized Helmholtz--Hodge decomposition of a general smooth vector field $\bold u_3$. We thus obtain a quasi-interpolation based method for decomposing such a field numerically, which contains \cite{Fisher} as a special case.
\section{\textit{Concrete   matrix-valued kernels}}
The discussion so far shows that the key to structure-preserving vectorial quasi-interpolation lies in constructing matrix-valued kernels each of whose columns \textbf{analytically} satisfies the physical constraints of the target function. To make the scheme readily usable in practice, this section provides concrete examples of such kernels.
\subsection{\textit{The case $n=1$}}
We begin with the univariate case $d=1$. Let $P_1(D)=\sum_{j=1}^{r_1}\gamma_{1}^{j}D^j$ be a linear differential operator of order $r_1$ with constant coefficients and satisfy the condition that $(P_1^*(-i\bm\omega)P_1(-i\bm\omega))^{-1}$ is well defined for $\bm \omega\neq \bold 0$. Then $P_1^T(-D)P_1(D)$ is a linear differential operator of order $2r_1$ whose Green function can be constructed as follows \cite{PiecewiseWu}.

Let $\lambda_{1},\dots,\lambda_{2r_1}$ be roots of the polynomial $P_1^T(-\lambda)P_1(\lambda)$. We define three functions
$B_{2r_1}(x) = [\lambda_{1},\dots,\lambda_{2r_1}]e^{\lambda x}$,
$B_{2r_1,+}(x)=\mathcal{I}(x\geq 0)B_{2r_1}(x)$, and $B_{2r_1,-}(x)=\mathcal{I}(-x\geq 0)B_{2r_1}(x)$. Here the divided difference is taken with respect to the variable $\lambda$,  and  $\mathcal{I}(\cdot)$ is the indicator function. Then the symmetric function $$\Phi_1(x)=B_{2r_1,\pm}(x):=\frac{B_{2r_1,+}(x)+B_{2r_1,-}(x)}{2}$$ is a Green function of $P_1^T(-D)P_1(D)$. In addition, the polyharmonic splines $\psi_{\ell,k,h}$ in such a case are B-splines of order $2\ell$, namely,  $$\psi_{\ell,k,h}(x)=(-1)^{\ell}h^{-2\ell}q_{1,\ell,k}(\widetilde{\Delta_h})\phi_{\ell}$$ with $\widetilde{\Delta_h}$ being the univariate centered divided difference operator and $\phi_{\ell}=\frac{|x|^{2\ell-1}}{2(2\ell-1)!}$. Consequently,  the function $$\Phi_{\ell,k,1,h}=\Phi_1*\psi_{\ell,k,h}=(-1)^{\ell}h^{-2\ell}q_{1,\ell,k}(\widetilde{\Delta_h})(\Phi_1*\phi_{\ell})$$ is a generalized B-spline corresponding to the differential operator $D^{2\ell}P_1^T(-D)P_1(D)$.

Finally, with $\Phi_{\ell,k,1,h}$ being at hand, we can construct an $m\times m$ matrix-valued kernel $$\mathbb{K}_{\ell,k,m,1,h}(x)=[P_1^T(-D)P_1(D)I_{m\times m}-P_1(D)P_1^T(-D)]\Phi_{\ell,k,1,h}(x),$$   due to the fact that $P_1^T(-D)P_1(D)\Phi_1=\delta$ and $\delta*\psi_{\ell,k,h}=\psi_{\ell,k,h}$.  In particular, when $m=3$, we can also construct a $3\times 3$ matrix-valued kernel $$\mathbb{J}_{\ell,k,3,1,h}(x)=P_1(D)P_1^T(-D)\Phi_{\ell,k,1,h}(x),$$ such that $P_1(D)\times \mathbb{J}_{\ell,k,3,1,h}=(\bold 0_3^T, \bold 0_3^T, \bold 0_3^T)$. This provides a general framework for constructing matrix-valued kernels with prescribed physical constraints from generalized B-splines $\Phi_{\ell,k,1,h}$, containing the tension splines and the Mat\'ern function of \cite{Fasshauer} as special cases.

We turn next to the multivariate case. As an example, we take $P_1(\nabla)=\bm\lambda+\nabla$ with a vector $\bm\lambda=(\lambda_1,\lambda_2,\cdots, \lambda_d)^T$ of length $\|\bm\lambda\|=\sqrt{\bm\lambda^T\bm\lambda}$. Since the classical divergence-free case $\bm\lambda=\bold 0_d^T$ has already been discussed, we assume $\bm\lambda\neq \bold 0_d^T$. Then $P_1^T(-D)P_1(D)=\|\bm\lambda\|^2-\Delta$ whose Green function (Sobolev spline or Mat$\acute{e}$rn function) takes the form \cite{Fasshauer}
\[
\Phi_1(\bold x) = \frac{1}{(2\pi)^{d/2} \|\bm\lambda\|^{\frac{d-2}{2}}} \|\bold x\|^{-\frac{d-2}{2}} K_{\frac{d-2}{2}}(\|\bm\lambda\| \cdot \|\bold x\|)
\]
with $K_{d/2-1}$ being the modified Bessel function of the second kind of order $d/2-1$. Similarly,  by letting $\Phi_{\ell,k,1,h}=\Phi_1*\psi_{\ell,k,h}$, we can construct two matrix-valued kernels $$\mathbb{K}_{\ell,k,m,1,h}(\bold x)=[(\|\bm\lambda\|^2-\Delta)I_{m\times m}-(\bm\lambda+\nabla)(\bm\lambda-\nabla)^T]\Phi_{\ell,k,1,h}(\bold x)$$   such that $(\bm\lambda-\nabla)\cdot\mathbb{K}_{\ell,k,m,1,h}=\bold 0_m$. In particular, when $m=3$, we can also construct a $3\times 3$ matrix-valued kernel
$$\mathbb{J}_{\ell,k,3,1,h}(\bold x)=(\bm\lambda+\nabla)(\bm\lambda-\nabla)^T\Phi_{\ell,k,1,h}(\bold x),$$ such that $(\bm\lambda+\nabla)\times \mathbb{J}_{\ell,k,3,1,h}=(\bold 0_3^T, \bold 0_3^T, \bold 0_3^T)$.
We stress that this last kernel is available even though  the hypothesis $P_1(-D)=(-1)^{r_1}P_1(D)$ used for Identity \eqref{generalidentity} is violated. Indeed, every column of $P_1(D)P_1^T(-D)$ is a scalar multiple of $P_1(D)$, and $P_1(D)\times P_1(D)=\bold 0_3^T$ for any $P_1$. We note that the  hypothesis $P_1(-D)=(-1)^{r_1}P_1(D)$ is needed only for Identity \eqref{generalidentity} and hence only for the error estimates of Lemma \ref{crosprojectionconvolutionerror} and Theorem \ref{errorestimateforderi2}, not for the structure preservation itself.
More importantly, the dot-product constraint $\nabla\cdot \bold u_m=\bm\lambda \cdot \bold u_m$ extends its classical divergence-free counterpart and enables an efficient and accurate identification of sources and sinks within vector fields. Precisely, if $\bm\lambda\cdot\bold u_m$ changes signs in the domain, that is, the corresponding vector field possesses both sources and sinks, then the constraint expresses the source density  as an \emph{algebraic} ($\bm\lambda \cdot \bold u_m$) rather than a differential functional ($\nabla\cdot \bold u_m$) of the field. In addition, since $Q_{\mathbb{K}_{\ell,k,m,1,h}}\bold u_m$ satisfies the same constraint exactly, its source density goes as $\bm\lambda \cdot(Q_{\mathbb{K}_{\ell,k,m,1,h}}\bold u_m)$.  Therefore, we can use  $\bm\lambda \cdot(Q_{\mathbb{K}_{\ell,k,m,1,h}}\bold u_m)$ instead of $\nabla\cdot (Q_{\mathbb{K}_{\ell,k,m,1,h}}\bold u_m)$ to locate  sources and sinks of $\bold u_m$  at the full order $2k$ of the scheme rather than at the order $2k-1$ that differentiation would cost. Subsection \ref{numericalsimulation} carries this out numerically.
\subsection{\textit{The case $n\geq 2$}}
In this case the physical structure of a vector-valued function is described by a system of linear differential operators with constant coefficients. For $d=1$ one obtains a system of ordinary differential equations whose Green function is available from the technique of the previous case ($n=1$, $d=1$), we therefore concentrate on $d\geq 2$.

 We begin with first-order constraints. The first example is the steady-state acoustics constraints \cite{Atteia1}: $\partial_{x_1}p=0$, $\partial_{x_2}p=0$, $\partial_{x_1}u_1+\partial_{x_2}u_2=0$, where $\bold u_3=(u_1,u_2,p)^T$ with $p$ the pressure and $(u_1,u_2)$ the velocity. Setting $P_1(D)=(0,0, \partial_{x_1})^T$, $P_2(D)=(0,0, \partial_{x_2})^T$, $P_3(D)=(\partial_{x_1},\partial_{x_2},0)^T$, these constraints take the form $P_l(-D)\cdot \bold u_3=0$ for $l=1,2,3$, which corresponds to $d=2$, $m=n=3$, and $r_1=r_2=r_3=1$. Note that $P_1(-i\bm\omega)$ and $P_2(-i\bm\omega)$ are parallel for every $\bm\omega$, so that $G(-i\bm\omega)$ is singular at every frequency; this is precisely the situation for which the pseudo-inverse in \eqref{proop} is needed. A direct computation yields
 \begin{equation*}
\mathbb{P}(\bm \omega) = \begin{pmatrix}\frac{\omega_2^2}{\|\bm\omega\|^2} & \frac{-\omega_1\omega_2}{\|\bm\omega\|^2} & 0\\[6pt] \frac{-\omega_1\omega_2}{\|\bm\omega\|^2} & \frac{\omega_1^2}{\|\bm\omega\|^2} & 0\\[4pt] 0&0&0\end{pmatrix},\quad \Phi_3 =-\phi_1I_{3\times 3},
 \end{equation*}
with $\phi_1$ being the two-dimensional thin plate spline  ($\Delta\phi_1 = \delta$).
 Consequently,  we have \begin{equation*}
\mathbb{K}_{\ell,k,3,3,h}(\bold x) = \begin{pmatrix}(\Delta I_{2\times 2}-\nabla \nabla^T)(\phi_1*\psi_{\ell,k,h})(\bold x) & 0_{2\times1}\\ 0_{1\times2} & 0\end{pmatrix},
\end{equation*}
where we used $\Delta(\phi_1*\psi_{\ell,k,h}) = (\Delta\phi_1)*\psi_{\ell,k,h} = \delta*\psi_{\ell,k,h} =\psi_{\ell,k,h}$ to rewrite $I_{2\times 2}\psi_{\ell,k,h} - \nabla\nabla^T(\phi_1*\psi_{\ell,k,h})$ as $(\Delta I_{2\times 2}-\nabla\nabla^T)(\phi_1*\psi_{\ell,k,h})$. Thus the kernel projects the velocity onto its divergence-free part and annihilates the pressure, in accordance with the constraints: only divergence-free velocities with vanishing pressure gradient survive.

We consider next the coupled divergence-free fields of magnetohydrodynamics.  Let $\bold u=(u_1,u_2,u_3)^T$ and $\bold B=(B_1,B_2,B_3)^T$ be corresponding velocity and magnetic field in magnetohydrodynamics \cite{Li}. We set $\bold u_6=(\bold u^T, \bold B^T)^T$ and $P_1(D)=(\nabla^T,\bold 0_3)^T$, $P_2(D)=(\bold 0_3,\nabla^T)^T$. Then the coupled divergence-free constraints read
$P_1(-D)\cdot\bold u_6=\nabla\cdot \bold u=0$ and $P_2(-D)\cdot\bold u_6=\nabla\cdot \bold B=0$, which correspond to $d=3,m=6$, $n=2$, $r_1=r_2=1$. Moreover, we have
 \begin{equation*}
\mathbb{P}(\bm \omega) = \begin{pmatrix}I_{3\times 3} - \frac{\bm\omega\bm\omega^T}{\|\bm\omega\|^2} & 0_{3\times3}\\[2pt] 0_{3\times3} & I_{3\times3} - \frac{\bm\omega\bm\omega^T}{\|\bm\omega\|^2}\end{pmatrix},\quad \Phi_2 = -\phi_1I_{2\times 2},
 \end{equation*} and  \begin{equation*}
\mathbb{K}_{\ell,k,6,2,h}(\bold x) = \begin{pmatrix}(\Delta I_{3\times 3}-\nabla\nabla^T)(\phi_1*\psi_{\ell,k,h})(\bold x) & 0_{3\times3}\\[2pt] 0_{3\times3} & (\Delta I_{3\times 3}-\nabla\nabla^T)(\phi_1*\psi_{\ell,k,h})(\bold x)\end{pmatrix},
\end{equation*}
which is exactly two copies of the divergence-free kernel of \cite{Gaoetal4} along the block diagonal.

 Similarly, consider the divergence-curl case \cite{ChenandSuter}. Let $\bold u_6=(\bold u^T, \bold v^T)^T$ with $\bold u=(u_1,u_2,u_3)^T$ the velocity and $\bold v=(v_1,v_2,v_3)^T$ the vorticity. The constraints $\nabla\cdot \bold u=0$ and $\nabla\times \bold v=\bold 0_{3}^T$ can be recast as $P_l(-D)\cdot \bold u_6=0$ for $1\leq l\leq 4$ with $P_1(D)=(\partial_{x_1},\partial_{x_2},\partial_{x_3},0,0,0)^T$, $P_2(D)=(0,0,0,0, -\partial_{x_3},\partial_{x_2})^T$, $P_3(D)=(0,0,0, \partial_{x_3},0,-\partial_{x_1})^T$, $P_4(D)=(0,0,0, -\partial_{x_2},\partial_{x_1},0)^T$, corresponding to $d=3$, $m=6$, $n=4$, and $r_1=r_2=r_3=r_4=1$. Here the three curl symbols obey the pointwise relation $\omega_1P_2(-i\bm\omega)+\omega_2P_3(-i\bm\omega)+\omega_3P_4(-i\bm\omega)=\bold 0_6^T$, the frequency-domain expression of $\nabla\cdot(\nabla\times\,\cdot\,)\equiv 0$, so that $G(-i\bm\omega)$ is again singular for every $\bm\omega$. A direct computation gives
 \begin{equation*}
\mathbb{P}(\bm \omega) = \begin{pmatrix}I_{3\times 3} - \frac{\bm\omega\bm\omega^T}{\|\bm\omega\|^2} & 0_{3\times3}\\[4pt] 0_{3\times3} & \frac{\bm\omega\bm\omega^T}{\|\bm\omega\|^2}\end{pmatrix},\quad \Phi_4 =-\phi_1I_{4\times 4},
 \end{equation*} and  \begin{equation*}
\mathbb{K}_{\ell,k,6,4,h}(\bold x) = \begin{pmatrix}(\Delta I_{3\times 3}-\nabla\nabla^T)(\phi_1*\psi_{\ell,k,h})(\bold x) & 0_{3\times3}\\[2pt] 0_{3\times3} & \nabla\nabla^T(\phi_1*\psi_{\ell,k,h})(\bold x)\end{pmatrix}.
\end{equation*}
  Observe that the kernel decouples: its first three components produce the divergence-free velocity field, and its last three the curl-free vorticity field.

As a toy example, we take $d=m=n=3$ and rewrite the curl-free constraint $\nabla\times \bold u_3=\bold 0_{3}^T$ as three dot-product constraints
$P_l(-D)\cdot \bold u_3=0$, $l=1,2,3$, with $P_1(D)=(0, -\partial_{x_3}, \partial_{x_2})^T$, $P_2(D)=(\partial_{x_3}, 0, -\partial_{x_1})^T$, and $P_3(D)=(-\partial_{x_2}, \partial_{x_1}, 0)^T$. Then $$\mathbb{P}(\bm \omega)=\frac{\bm\omega\bm\omega^T}{\|\bm\omega\|^2}, \quad \Phi_3 =-\phi_1I_{3\times 3},\quad  \text{and}\quad \mathbb{K}_{\ell,k,3,3,h}(\bold x)=\nabla\nabla^T(\phi_1*\psi_{\ell,k,h})(\bold x),$$ which is exactly the curl-free kernel constructed in reference \cite{Fisher}. Moreover, $\mathbb{K}_{\ell,k,3,3,h}$ coincides with the kernel $\mathbb{J}_{\ell,k,3,1,h}$ associated with the cross-product constraint \eqref{otherphysicalinvariants} for $P_1(D)=\nabla$. This illustrates the duality between the dot-product constraints \eqref{physicalinvariants} with $n=m=3$ and the single cross-product constraint \eqref{otherphysicalinvariants} generated by $\nabla$.

All examples so far are of first order, and each of them decouples: the constraints act on disjoint blocks of components, so that the resulting kernel is a block assembly of divergence-free and curl-free blocks. We go further with another example (Saint-Venant Constraints) that is neither. It is of \emph{second} order, its constraints \emph{couple} all components, its Gram symbol degenerates, and the resulting kernel is nevertheless available in closed form in terms of the two lowest thin plate splines $\phi_1$ and $\phi_2$ alone.

Let $d=3$ and let $\bm\varepsilon$ be a symmetric strain tensor written as
$\bold u_6=(\varepsilon_{11},\varepsilon_{22},\varepsilon_{33},\sqrt2\varepsilon_{23},\sqrt2\varepsilon_{13},\sqrt2\varepsilon_{12})^T$, where the factors $\sqrt2$ (Mandel normalisation) are chosen so that the Euclidean inner product on $\R^6$ agrees with the tensor inner product on symmetric tensors. The Saint-Venant compatibility conditions $\mathrm{inc}(\bm\varepsilon)=0$ of linear elasticity express that $\bm\varepsilon$ is the symmetrised gradient of a displacement field. Taking their three diagonal components gives constraints of the form \eqref{physicalinvariants} with $m=6$, $n=3$ and $r_1=r_2=r_3=2$, namely
\begin{equation*}
\begin{split}
P_1(D)&=(0,\ \partial_{x_3}^2,\ \partial_{x_2}^2,\ -\sqrt2\,\partial_{x_2}\partial_{x_3},\ 0,\ 0)^T,\\
P_2(D)&=(\partial_{x_3}^2,\ 0,\ \partial_{x_1}^2,\ 0,\ -\sqrt2\,\partial_{x_1}\partial_{x_3},\ 0)^T,\\
P_3(D)&=(\partial_{x_2}^2,\ \partial_{x_1}^2,\ 0,\ 0,\ 0,\ -\sqrt2\,\partial_{x_1}\partial_{x_2})^T.
\end{split}
\end{equation*}
A direct computation gives the determinant $\det G(-i\bm\omega)=2\,\omega_1^2\omega_2^2\omega_3^2\|\bm\omega\|^6$, which vanishes on the three coordinate planes. Thus $G$ is \emph{not} invertible on a set that the frequency integrals, and the Moore--Penrose pseudo-inverse in \eqref{proop} is genuinely required, this is the first example in which the degeneracy is not a consequence of a block structure.

Nevertheless, $\mathbb{P}$ admits an explicit representation, as the solution set of $\mathrm{inc}(\bm\varepsilon)=0$ coincides precisely with the range of the symmetrised gradient. Denote by $\Gamma(D)$ the $6\times3$ symmetrised-gradient operator under the above normalisation, so that its symbol is $\Gamma(i\bm\omega)=i\mathcal{A}(\bm\omega)$ with $\mathcal{A}(\bm\omega)=(\mathcal{A}_1(\bm\omega),\mathcal{A}_2(\bm\omega),\mathcal{A}_3(\bm\omega))$ and $\mathcal{A}_1(\bm\omega)=(\omega_1,0,0,0,\omega_3/\sqrt2,\omega_2/\sqrt2)^T$,
$\mathcal{A}_2(\bm\omega)=(0,\omega_2,0,\omega_3/\sqrt2,0,\omega_1/\sqrt2)^T$,
$\mathcal{A}_3(\bm\omega)=(0,0,\omega_3,\omega_2/\sqrt2,\omega_1/\sqrt2,0)^T$.
One verifies $P_l^T(i\bm\omega)\mathcal{A}(\bm\omega)=\bold 0_3^T$ for $l=1,2,3$, so $\mathrm{Ran}\,\mathcal{A}\subseteq \ker P^*$, with $\operatorname{rank}\mathcal{A}=3$ for every $\bm\omega\neq\bold 0$.
Furthermore, whenever $\omega_1\omega_2\omega_3\neq0$, $\operatorname{rank}P^*=3$, so that $\dim\ker P^*=3=\dim\mathrm{Ran}\,\mathcal{A}$ and thus the inclusion is an equality. Thus the two orthogonal projectors onto that common subspace agree, i.e., $\mathbb{P}=\mathcal{A}(\mathcal{A}^T\mathcal{A})^{-1}\mathcal{A}^T$.
On coordinate planes, by contrast, the inclusion is strict and the two projectors differ. For example, at $\bm\omega=(0,\omega_2,\omega_3)$, both  $P_2(-i\bm\omega)$ and $P_3(-i\bm\omega)$  collapse to scalar multiples of the first standard basis vector. In this case, $\operatorname{rank}P=2$, $\ker P^*$ is four-dimensional, and $\mathbb{P}$ has rank four, whereas  $\mathcal{A}(\mathcal{A}^T\mathcal{A})^{-1}\mathcal{A}^T$ remains of rank three. This shows that the two Fourier multipliers coincide away from a Lebesgue-null set; they thus induce the same bounded operator on $L^2$ and define identical kernels in the sense of tempered distributions. Thus we may compute $\mathbb{K}_{\ell,k,6,3,h}$ from the projector $\mathcal{A}(\mathcal{A}^T\mathcal{A})^{-1}\mathcal{A}^T$.

Indeed, the relation $\mathcal{A}^T\mathcal{A}=\tfrac12\big(\|\bm\omega\|^2I_{3\times 3}+\bm\omega\bm\omega^T\big)$ together with   the Sherman--Morrison formula yields
\begin{equation*}
\big(\mathcal{A}^T\mathcal{A}\big)^{-1}=2\Bigg(\frac{I_{3\times3}}{\|\bm\omega\|^2}-\frac{\bm\omega\bm\omega^T}{2\|\bm\omega\|^4}\Bigg),
\end{equation*}
which in turn leads to
\begin{equation*}
\mathbb{P}(\bm\omega)=\frac{2\,\mathcal{A}\mathcal{A}^T}{\|\bm\omega\|^2}-\frac{(\mathcal{B}\bm\omega)(\mathcal{B}\bm\omega)^T}{\|\bm\omega\|^4},\quad \text{with}\quad \mathcal{B}\bm\omega=(\omega_1^2,\omega_2^2,\omega_3^2,\omega_2\omega_3,\omega_1\omega_3,\omega_1\omega_2)^T.
\end{equation*}
Moreover, since $\|\bm\omega\|^{-2}$ and $\|\bm\omega\|^{-4}$ are the Fourier transforms of $-\phi_1$ and $\phi_2$, the matrix-valued  kernel can be explicitly derived in the spatial domain as
\begin{equation}\label{svkernel}
\mathbb{K}_{\ell,k,6,3,h}=-2\Gamma(D)\Gamma^T(-D)\big(\phi_1*\psi_{\ell,k,h}\big)+\Gamma(D)\nabla\nabla^T\Gamma^T(-D)\big(\phi_2*\psi_{\ell,k,h}\big).
\end{equation}
In addition, by the iterated convolution relations $\phi_1*\phi_{\ell}=\phi_{\ell+1}$ and $\phi_2*\phi_{\ell}=\phi_{\ell+2}$, both convolutions reduce to a finite difference of  thin plate splines. Two features deserve emphasis. First, in contrast with the examples of the previous subsection, the matrix-valued kernel \eqref{svkernel} is not a block assembly of previously known kernels: every entry couples all six components of the strain. Second, although the underlying scalar operator $P^T(-D)P(D)$ is of order twelve, \emph{no} Green function beyond $\phi_1$ and $\phi_2$ is needed.
\section{\textit{Numerical experiments}}
This section consists of two subsections. The first one reports numerical evidence for the two claims that carry the paper: that the predicted convergence orders of Theorem \ref{errorestimateforderi} are attained, and that the physical constraints are preserved \textbf{exactly} rather than approximately. As the second part, we demonstrate a genuinely generalized Helmholtz--Hodge decomposition with our quasi-interpolation.
\subsection{\textit{Numerical simulations}}\label{numericalsimulation}
As an example, we take $d=m=2$, $n=1$, $P_1(\nabla)=\bm\lambda+\nabla$ with $\bm\lambda\neq\bold 0_2^T$ and write $a=\|\bm\lambda\|^2$. The constraint \eqref{physicalinvariants} reads $P_1(-D)\cdot\bold u_2=(\bm\lambda-\nabla)\cdot\bold u_2=0$, that is
\begin{equation}\label{srcsinkconstraint}
\nabla\cdot\bold u_2=\bm\lambda\cdot\bold u_2 .
\end{equation}
Two features motivate us to consider this example. First,  the Gram symbol
$G(-i\bm\omega)=(\bm\lambda-i\bm\omega)^T(\bm\lambda+i\bm\omega)=a+\|\bm\omega\|^2>0$
is invertible at \emph{every} frequency including $\bm\omega=\bold 0$ and thus $\mathbb{P}$ extends smoothly to the origin. Second, and this is the point of the experiment, the constraint \eqref{srcsinkconstraint} converts the \emph{differential} quantity $\nabla\cdot\bold u_2$ into the \emph{algebraic} quantity $\bm\lambda\cdot\bold u_2$, which enables us to locate sources and sinks of $\bold u_2$ more accurately and efficiently if $\bold u_2$ changes signs in the domain.

Since $P_1^T(-D)P_1(D)=a-\Delta$, the corresponding Green function $\Phi_1$ is the Sobolev spline taking the form $\Phi_1(\bold x)=\frac{1}{2\pi}K_0(\|\bm\lambda\|\,\|\bold x\|)$ for $d=2$. Moreover, with $\Theta_{\ell}:=\Phi_1*\phi_{\ell}$ we have $\Phi_{\ell,k,1,h}=(-1)^{\ell}h^{-2\ell}q_{d,\ell,k}(\widetilde{\Delta_h})\Theta_{\ell}$, which in turn leads to a matrix-valued kernel in the spatial domain as $\mathbb{K}_{\ell,k,2,1,h}=(-1)^{\ell}h^{-2\ell}q_{d,\ell,k}(\widetilde{\Delta_h})\big\{[(a-\Delta)I_{2\times2}-(\bm\lambda+\nabla)(\bm\lambda-\nabla)^T]\Theta_{\ell}\big\}$. We emphasize that the function $\Theta_{\ell}$ is elementary: writing $s=\|\bm\omega\|^2$, the partial-fraction identity $\frac{1}{(a+s)s^{2}}=\frac{1}{as^{2}}-\frac{1}{a^{2}s}+\frac{1}{a^{2}(a+s)}$ together with $\widehat{\phi_{\ell}}=(-1)^{\ell}s^{-\ell}$ gives, for $\ell=2$,
\begin{equation*}
\Theta_2=\frac{\phi_2}{a}+\frac{\phi_1}{a^{2}}+\frac{\Phi_1}{a^{2}},
\end{equation*}
a sum of two thin plate splines and one Sobolev spline. In particular, if we chose $\bm\lambda=(1,0)^T$, then the bracket collapses to
\begin{equation*}
(a-\Delta)I_{2\times2}-(\bm\lambda+\nabla)(\bm\lambda-\nabla)^T=
\begin{pmatrix}
-\partial_{x_2}^2 & \partial_{x_2}+\partial_{x_1}\partial_{x_2}\\[2pt]
-\partial_{x_2}+\partial_{x_1}\partial_{x_2} & 1-\partial_{x_1}^2
\end{pmatrix}.
\end{equation*}

  To construct a field satisfying constraint \eqref{srcsinkconstraint}, we begin with a   divergence-free field $\bold w=(\partial_{x_2}g,-\partial_{x_1}g)^T$, where $g(\bold x)=e^{-\|\bold x\|^2}\sin(2x_2)$. The resulting field
$\bold u_2(\bold x)=e^{\bm\lambda\cdot\bold x}\,\bold w(\bold x)$
has genuine sources and sinks inside the computational domain.
Moreover, one can verify that the source density
$\nabla\cdot\bold u_2(\bold x)=2\big(\cos 2x_2-x_2\sin 2x_2\big)e^{-x_1^2+x_1-x_2^2}$
 changes sign across the four interfaces $x_2=\pm0.5384369932$, $\pm1.8217985837$, which are the roots of $\cos 2x_2=x_2\sin 2x_2$; see   Figure \ref{fig:srcsink_exact}. Here the four black lines denote the four exact interfaces. In the sequels, $Q_{\mathbb K}$ and $Q_{\mathbb J}$ are abbreviations of $Q_{\mathbb{K}_{\ell,m,n,h}}$ and $Q_{\mathbb{J}_{\ell,m,n,h}}$ respectively, while $Q_{\ell,h}\bold u_m$ denotes the scalar quasi-interpolant applied to $\bold u_m$ componentwise.

\begin{figure}[t]
    \centering

    \subfigure[Exact field $\bold u_2$, $h=0.05$.]{
        \label{fig:srcsink_exact}
        \includegraphics[height=4.25cm,keepaspectratio]
        {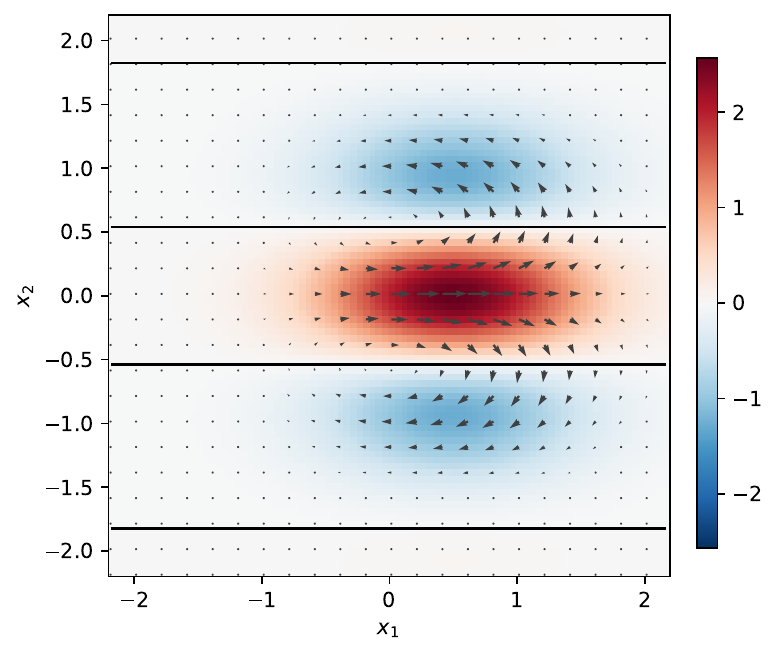}
    }
    \hfill
    \subfigure[$Q_{\mathbb K}\mathbf{u}_2$, $h=0.05$.]{
        \label{fig:srcsink_qk}
        \includegraphics[height=4.25cm,keepaspectratio]
        {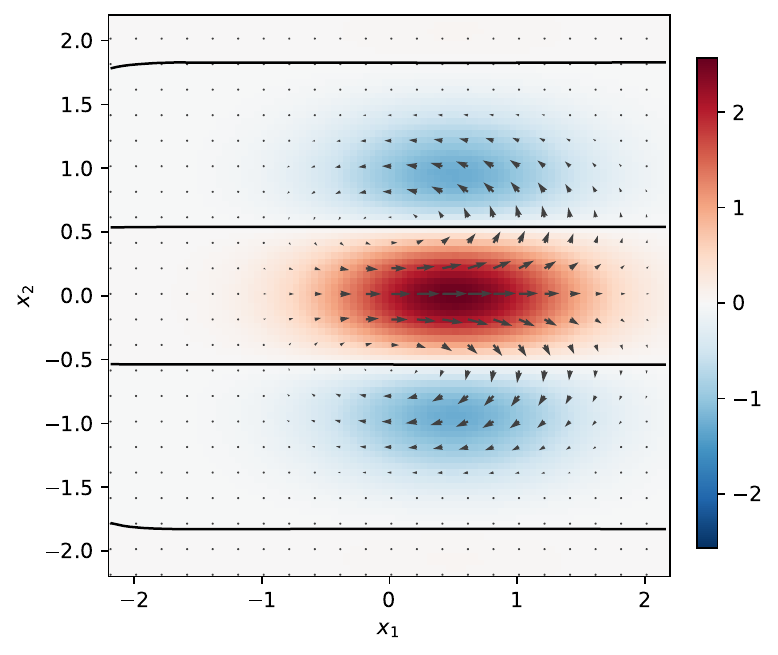}
    }
    \hfill
    \subfigure[Interface reconstruction.]{
        \label{fig:srcsink_interface}
        \includegraphics[height=4.25cm,keepaspectratio]
        {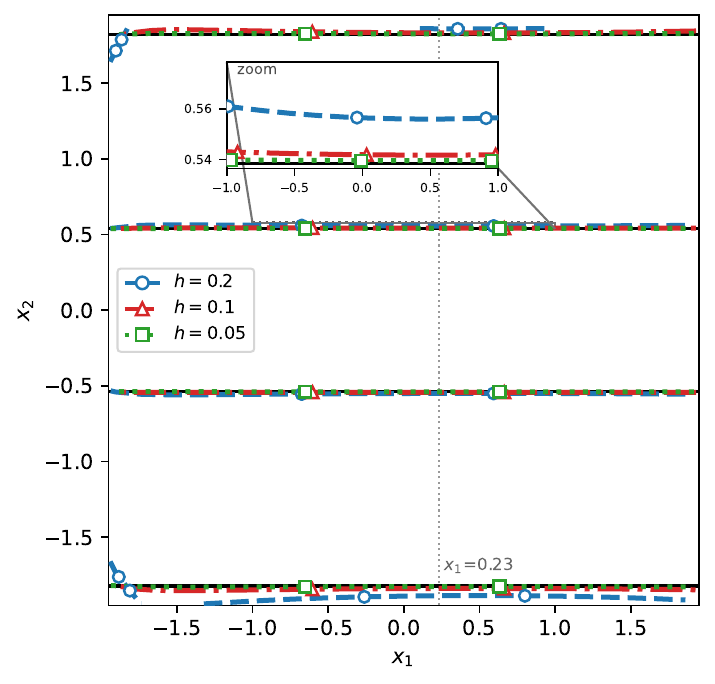}
    }

    \caption{
    Source--sink experiment for $(\boldsymbol{\lambda}-\nabla)\cdot\mathbf{u}_2=0$, $\boldsymbol{\lambda}=(1,0)^T$.
    Panel (a) shows the exact field evaluated on the shifted grid associated with $h=0.05$, and panel (b) shows the corresponding structure-preserving approximation $Q_{\mathbb K}\mathbf{u}_2$.
    In panels (a) and (b), the background color represents the divergence of the corresponding vector field, and the same color scale is used for both panels. The arrows indicate the vector field, while the black zero contour, $\nabla\cdot\mathbf{u}_2=0$, marks the source-sink interface. Panel (c) shows the exact and reconstructed interfaces for $h=0.2$, $0.1$, and $0.05$; the inset magnifies one interface to show the convergence of its reconstructed location.
    }
    \label{fig:srcsink_summary}
\end{figure}

%

 We take $\ell=2$, $k=1$, truncate the lattice sum to $\|\bold jh\|_{\infty}\leq5$ and compute maximum errors over $[-2.2,2.2]^2$. The results are summarized in Table \ref{tab:srcsink} and Figure \ref{fig:srcsinkrates}. Both the approximation error and the source-density error decay at the theoretical order  $2$. These two errors are identical, since for $\bm\lambda=(1,0)^T$ the source density coincides with the first component of the vector field, at which the supremum is achieved. This observed agreement provides numerical evidence for the analytic identity stated above. The constraint residual remains at machine round-off level throughout all tests: its magnitude never exceeds $4\times10^{-11}$,  while the plain componentwise scalar quasi-interpolant $Q_{\ell,h}\bold u_2$ violates constraint \eqref{srcsinkconstraint} by  $10^{-1}$ to $4\times10^{-4}$, i.e., seven to twelve orders of magnitude larger.

\begin{table}[H]
\centering
\caption{Source/sink experiment, $\bm\lambda=(1,0)^T$, $\ell=2$, $k=1$.}\label{tab:srcsink}
\begin{tabular}{c|cc|c|cc}
\hline
$h$ & $\|Q_{\mathbb{K}}\bold u_2-\bold u_2\|_{\infty}$ & order & $\|\bm\lambda\cdot Q_{\mathbb{K}}\bold u_2-\nabla\cdot\bold u_2\|_{\infty}$ & $\|(\bm\lambda-\nabla)\cdot Q_{\mathbb{K}}\bold u_2\|_{\infty}$ & $\|(\bm\lambda-\nabla)\cdot Q_{\ell,h}\bold u_2\|_{\infty}$\\
\hline
$0.4$  & $4.954$e--$01$ & ---    & $4.954$e--$01$ & $7.877$e--$14$ & $1.395$e--$01$\\
$0.2$  & $1.527$e--$01$ & $1.70$ & $1.527$e--$01$ & $6.812$e--$13$ & $2.220$e--$02$\\
$0.1$  & $4.027$e--$02$ & $1.92$ & $4.027$e--$02$ & $5.189$e--$12$ & $3.078$e--$03$\\
$0.05$ & $1.021$e--$02$ & $1.98$ & $1.021$e--$02$ & $3.367$e--$11$ & $4.005$e--$04$\\
\hline
\end{tabular}
\end{table}

\begin{figure}[H]
\centering
\includegraphics[width=\textwidth]{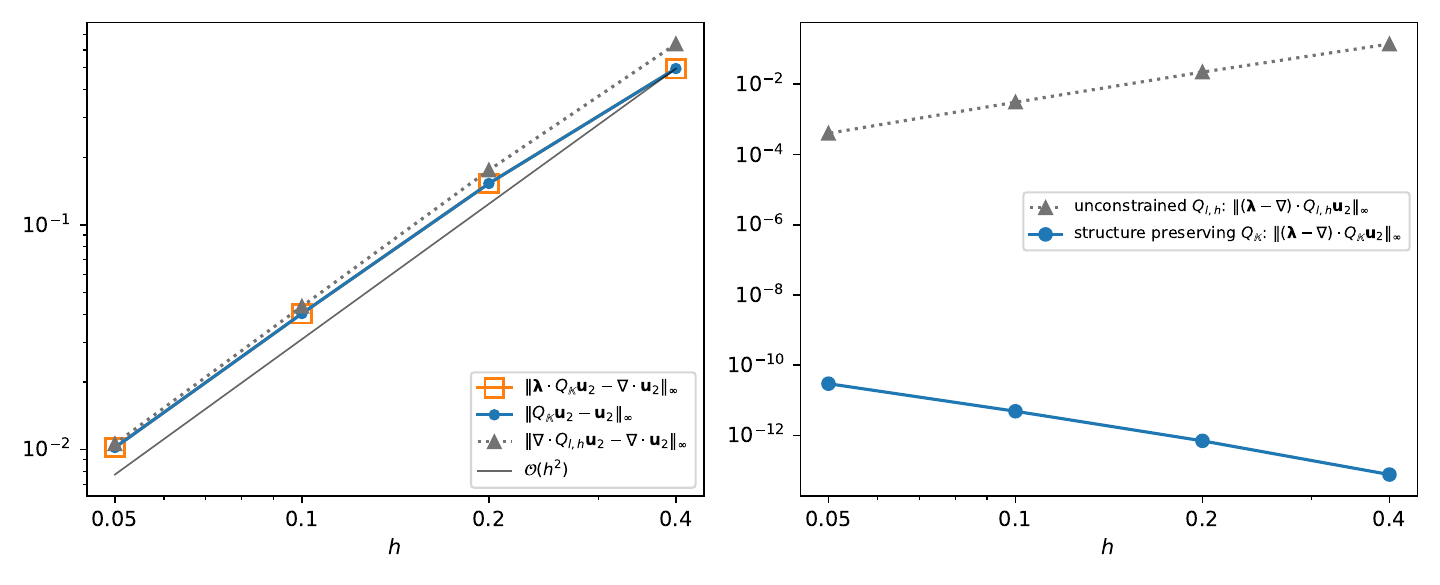}
\caption{Source/sink experiment. Left: convergence of the field and of the source density, against the reference slope $h^{2k}=h^2$. Right: constraint residuals  confirming that the residuals of $Q_{\mathbb{K}}$ is pure round-off.}\label{fig:srcsinkrates}
\end{figure}

Exact structure preservation has a concrete consequence for localization of sources and sinks. Standard source-sink diagnosis requires differentiation of the reconstruction, for which  Theorem \ref{errorestimateforderi} only guarantees an error rate of $\mathcal{O}(h^{2k-1})$. By contrast, since $Q_{\mathbb{K}}\bold u_2$ satisfies constraint \eqref{srcsinkconstraint} \textbf{exactly}, its source density can be evaluated directly via
$\nabla\cdot(Q_{\mathbb{K}_{\ell,k,2,1,h}}\bold u_2)=\bm\lambda\cdot(Q_{\mathbb{K}_{\ell,k,2,1,h}}\bold u_2)$,
without any differentiation. Consequently, the diagnosis inherits the  $\mathcal{O}(h^{2k})$ convergence rate corresponding to $|\alpha|=0$.
  To illustrate the performance of the method, we employ Brent's root-finding algorithm to solve $\bm\lambda\cdot(Q_{\mathbb{K}_{\ell,k,2,1,h}}\bold u_2)(0.23,x_2)=0$ with respect to  $x_2$, thereby capturing all four interfaces. The maximum interface localization errors corresponding to mesh sizes $h=0.4,0.2,0.1,0.05$ are
$1.441\text{e--}01, \ 3.627\text{e--}02,\  9.169\text{e--}03,\ 2.298\text{e--}03$,
 which  yields empirical convergence orders of $1.99$, $1.98$, $2.00$. These numerical results firmly verify that the present interface localization strategy achieves the full order $2k$ with $k=1$.
Figure \ref{fig:srcsink_qk}  demonstrates the reconstructed field under $h=0.05$, while  Figure \ref{fig:srcsink_interface}  compares the numerically reconstructed zero sets with the exact interfaces in two dimensions.
\subsection{\textit{An example of generalized Helmholtz--Hodge decomposition}}\label{sec:genhodge}
We define the differential operator
$$P_1(D)=\big(\partial_{x_1}^2+\partial_{x_2}^2-\partial_{x_3}^2,\ 2\partial_{x_1}\partial_{x_3},\ 2\partial_{x_2}\partial_{x_3}\big)^T=:\big(\mathcal{A}_1,\mathcal{A}_2,\mathcal{A}_3\big)^T,$$
whose Fourier symbol satisfies $P_1(i\bm\omega)=-\bm\sigma(\bm\omega)$, where $\bm\sigma(\bm\omega)=(\omega_1^2+\omega_2^2-\omega_3^2,\,2\omega_1\omega_3,\,2\omega_2\omega_3)^T$ and the norm identity $\|\bm\sigma(\bm\omega)\|^2=\|\bm\omega\|^4$ holds. All assumptions established in Subsection 3.2 are satisfied. Specifically, $P_1(D)$ is a homogeneous differential operator of order $r_1=2$, which yields $P_1(-D)=(-1)^{r_1}P_1(D)=P_1(D)$. Moreover, the relation $\|\bm\sigma(\bm\omega)\|^2=\|\bm\omega\|^4$ immediately implies that $\bm\sigma(\bm\omega)\neq\bold 0_3^T$ for all nonzero $\bm\omega$, which verifies the non-degeneracy condition required in Lemma \ref{ghh}. The corresponding kernel functions can therefore be directly derived.

Observing the identities $\|\bm\sigma\|^2=\|\bm\omega\|^4$ and $P_1^T(-D)P_1(D)=\sum_{i=1}^3\mathcal{A}_i^2=\Delta^2$, we obtain $\Phi_1=\phi_2$ and derive the kernel expressions
\begin{equation}\label{genkernels}
\big(\mathbb{K}_{\ell,k,3,1,h}\big)_{ij}=\big[\Delta^2\delta_{ij}-\mathcal{A}_i\mathcal{A}_j\big]\Theta,
\qquad
\big(\mathbb{J}_{\ell,k,3,1,h}\big)_{ij}=\mathcal{A}_i\mathcal{A}_j\Theta,
\end{equation}
where $\Theta:=\phi_2*\psi_{\ell,k,h}=(-1)^{\ell}h^{-2\ell}q_{3,\ell,k}(\widetilde{\Delta_h})\phi_{\ell+2}$. Both kernels are fourth-order constant-coefficient combinations of a single thin-plate spline. Furthermore, the relation $\Delta^2\phi_{\ell+2}=\phi_{\ell}$ recovers the consistency identity consistent with classical formulations:
\begin{equation}\label{gensum}
\mathbb{K}_{\ell,k,3,1,h}+\mathbb{J}_{\ell,k,3,1,h}=\psi_{\ell,k,h}I_{3\times3}.
\end{equation}
Constrained field decomposition can be similarly established. Since $\mathcal{A}_i\mathcal{A}_j=\mathcal{A}_j\mathcal{A}_i$, the field $P_1(D)\chi$ is annihilated by the curl operator $P_1(D)\times$ for any scalar function $\chi$. Meanwhile, $P_1(D)\times\bold A$ is annihilated by the divergence operator $P_1(-D)\cdot$ for any vector field $\bold A$. These two families of fields are mutually orthogonal at all frequencies, which yields the closed-form field decomposition with generalized Helmholtz--Hodge components:
\begin{equation}\label{genfield}
\bold u_3=P_1(D)\times\bold A+P_1(D)\chi.
\end{equation}

We fix the parameters $d=m=3$, $\ell=2$, and $k=1$, such that the kernels defined in \eqref{genkernels} correspond to fourth-order finite differences of the thin-plate spline $\phi_4=-\|\bold x\|^5/(2880\pi)$. For numerical validation, we adopt the field decomposition in \eqref{genfield} with the scalar component $\chi(\bold x)=e^{-\|\bold x\|^2}$ and the vector component $\bold A(\bold x)=e^{-\|\bold x\|^2}(\sin x_2,\sin x_3,\sin x_1)^T$.  All numerical sampling data are collected on the uniform lattice $h\mathbb{Z}^3$, with spatial indices truncated as $j_\nu=-n_h,\ldots,n_h$ for $\nu=1,2,3$, where the truncation parameter is set to $n_h=\lceil 3/h\rceil$. The corresponding sampling points are defined as $\mathbf{x}_{\mathbf{j}}=h\mathbf{j}=(j_1h,j_2h,j_3h)$. To rigorously quantify the off-grid approximation performance, numerical errors are evaluated at eight fixed off-grid test points:
\[
\begin{aligned}
&(-0.65,-0.65,-0.25),\quad
(-0.25,-0.65,-0.25),\quad
(0.15,-0.65,0.15),\quad
(0.55,-0.65,0.15),\\
&(-0.65,-0.25,0.55),\quad
(-0.25,-0.25,0.55),\quad
(0.15,-0.25,-0.65),\quad
(0.55,-0.25,-0.65).
\end{aligned}
\]
These test points remain fixed for all mesh sizes $h$ and are deliberately positioned away from the sampling lattice, ensuring that the measured errors faithfully reflect the off-grid approximation accuracy of the proposed method. For mesh sizes $h=0.4$, $0.2$, and $0.1$, the total numbers of sampling points are $17^3=4913$, $31^3=29791$, and $61^3=226981$, respectively, while the number of test points is consistently fixed at eight for all cases.

 Numerical results are summarized in Table \ref{tab:genhodge}. Columns 2-5 present the errors of the two field components along with their numerically observed convergence orders, while Column 6 reports the dot-product residual of $Q_{\mathbb{K}}\bold u_3$. The results further validate the exactness of structure preservation of the proposed scheme, as precise residual cancellation strictly relies on the exact equivalence $\sum_i\mathcal{A}_i^2=\Delta^2$ and the accurate relative normalization between $\phi_2$ and $\phi_4$.  Column 7 quantifies the residual errors of the consistency relation in \eqref{gensum}. One implementation point governs this column. The diagonal of $\mathbb{K}$ is evaluated as $\Delta^2\Theta$, literally as \eqref{genkernels} prescribes, that is as fourth-order differences of $\phi_{\ell+2}$, while $\psi_{\ell,k,h}$ is evaluated from $\phi_{\ell}$. The two sides of \eqref{gensum} are therefore computed along independent floating-point paths, and their agreement tests the normalization $\Delta^2\phi_{\ell+2}=\phi_{\ell}$. Had we instead defined $\mathbb{K}:=\psi_{\ell,k,h}I_{3\times3}-\mathbb{J}$, the identity would cancel by construction and the column would verify nothing.

\begin{table}[H]
\centering
\caption{Generalized decomposition with the second-order $P_1$, under $d=m=3$, $\ell=2$, $k=1$.}\label{tab:genhodge}
\begin{tabular}{c|cc|cc|cc}
\hline
$h$ & $\|Q_{\mathbb{K}}\bold u_3-\mathcal{P}^{\cdot}\bold u_3\|_{\infty}$ & order & $\|Q_{\mathbb{J}}\bold u_3-\mathcal{P}^{\times}\bold u_3\|_{\infty}$ & order & $\|P_1(-D)\cdot Q_{\mathbb{K}}\bold u_3\|_{\infty}$ & Identity \eqref{gensum}\\
\hline
$0.4$ & $5.6840$e--$01$ & ---    & $3.6778$e--$01$ & ---    & $2.82$e--$14$ & $1.13$e--$13$\\
$0.2$ & $1.6909$e--$01$ & $1.75$ & $1.0738$e--$01$ & $1.78$ & $1.67$e--$13$ & $8.37$e--$13$\\
$0.1$ & $4.3882$e--$02$ & $1.95$ & $2.7739$e--$02$ & $1.95$ & $1.14$e--$12$ & $3.48$e--$12$\\
\hline
\end{tabular}
\end{table}

Numerical observations demonstrate that both field components converge at the theoretically estimated order $2k=2$. The dot-product constraint is preserved up to machine round-off, and so is the Identity \eqref{gensum}; both residuals grow in proportion to $h^{-2\ell}=h^{-4}$, the amplification factor of the difference representation, rather than in proportion to any power of $h$ that a discretization error would produce. Notably, we do not present the curl-type residual for $Q_{\mathbb{J}}\bold u_3$ in this study. The symmetric property $\mathcal{A}_i\mathcal{A}_j=\mathcal{A}_j\mathcal{A}_i$ inherently enforces $P_1(D)\times\mathbb{J}=\bold 0_3^T$, yielding exact zero residuals in all numerical implementations.

Figures~\ref{fig:genhodge_3d} and~\ref{fig:genhodge_three_slices} show the decomposition realized by the structure-preserving quasi-interpolants. Figure~\ref{fig:genhodge_3d} illustrates the three-dimensional vector field decomposition results for $h=0.1$. Panel (a) plots the exact field $\mathbf{u}_3$, while Panels (b) and (c) display the two reconstructed components $Q_{\mathbb K}\mathbf{u}_3$ and $Q_{\mathbb J}\mathbf{u}_3$, respectively. Their superposition, presented in Panel (d), obeys the identity
$Q_{\mathbb K}\mathbf{u}_3+Q_{\mathbb J}\mathbf{u}_3=Q_{\ell,h}\mathbf{u}_3$,
and precisely recovers the spatial structure of the original vector field.
To further inspect the decomposition performance on typical cross-sections, Figure~\ref{fig:genhodge_three_slices} visualizes field distributions on three representative planes $x_3=-0.75$, $x_3=0$, and $x_3=0.75$. Each row corresponds to one fixed slice position, and the columns sequentially present the exact field $\mathbf{u}_3$, the reconstructed component $Q_{\mathbb K}\mathbf{u}_3$, the complementary component $Q_{\mathbb J}\mathbf{u}_3$, and their superposition $Q_{\mathbb K}\mathbf{u}_3+Q_{\mathbb J}\mathbf{u}_3$. In each subplot, the background color encodes the magnitude of the three-dimensional vector field, while the arrows depict its projection onto the $x_1$-$x_2$ plane. Across all cross-sections, the reconstructed sum agrees with the exact field to the order reported in Table \ref{tab:genhodge}, while the two decomposed components exhibit distinct and physically consistent spatial patterns. Collectively, both three-dimensional global and cross-sectional local visualizations demonstrate that the proposed quasi-interpolants can accurately implement the generalized Helmholtz--Hodge decomposition, while \textbf{exactly} preserving the inherent differential structures of corresponding components.
\begin{figure}[H]
    \centering

    \subfigure[$\mathbf{u}_3$.]{
        \label{fig:genhodge_3d_exact}
        \includegraphics[width=0.4\textwidth]
        {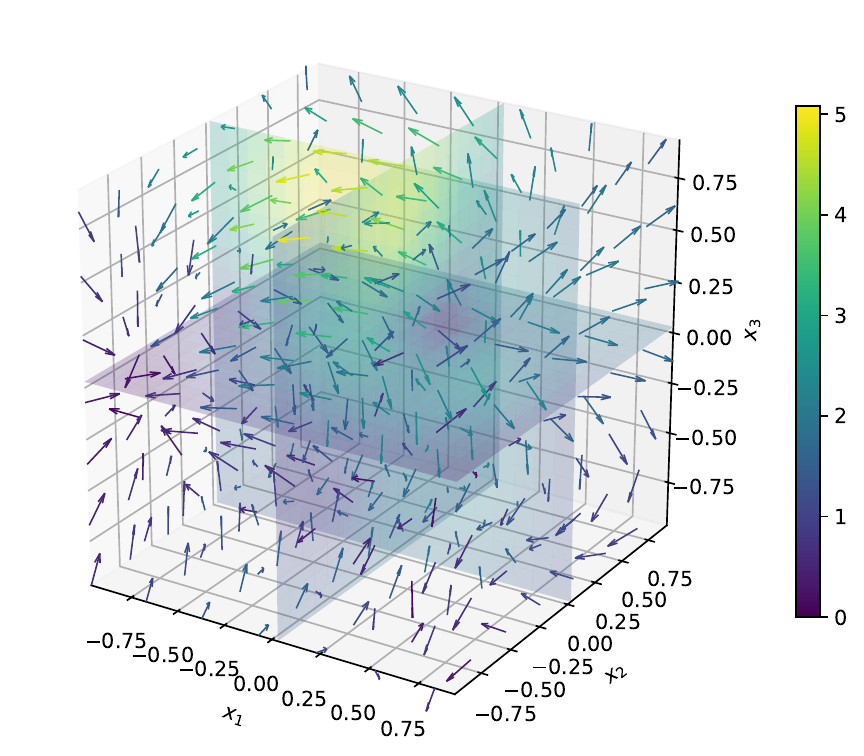}
    }
    \hspace{0.025\textwidth}
    \subfigure[$Q_{\mathbb K}\mathbf{u}_3$.]{
        \label{fig:genhodge_3d_qk}
        \includegraphics[width=0.4\textwidth]
        {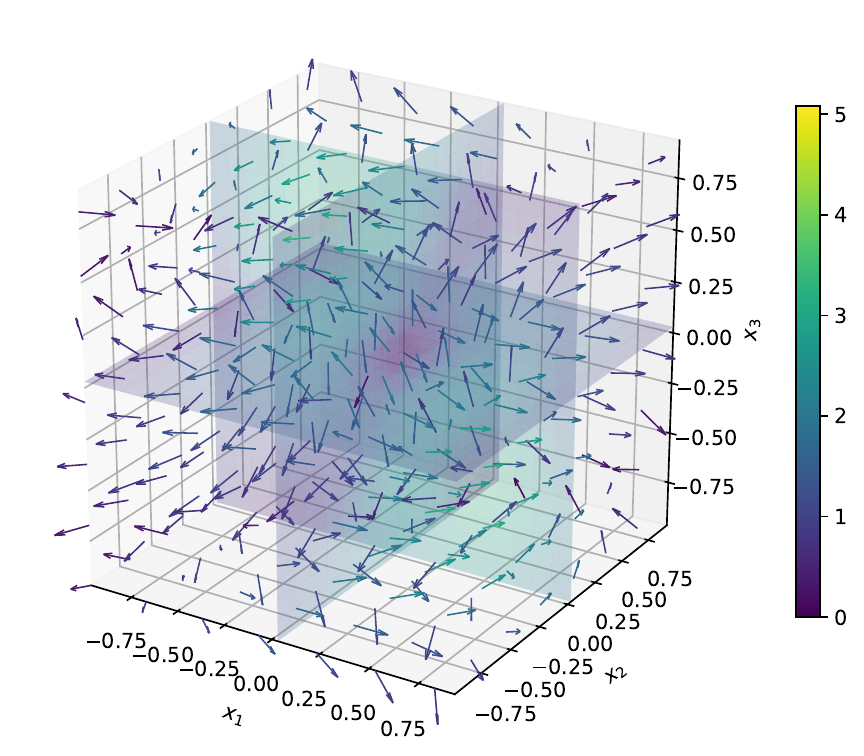}
    }

    \vspace{0.2em}
    \subfigure[$Q_{\mathbb J}\mathbf{u}_3$.]{
        \label{fig:genhodge_3d_qj}
        \includegraphics[width=0.4\textwidth]
        {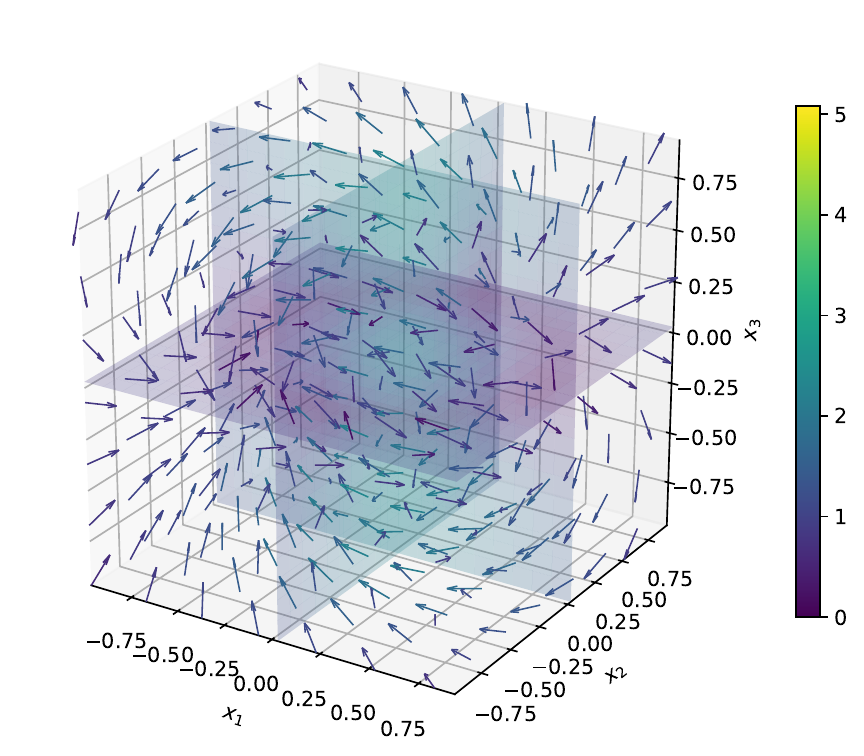}
    }
    \hspace{0.025\textwidth}
    \subfigure[$Q_{\mathbb K}\mathbf{u}_3+
        Q_{\mathbb J}\mathbf{u}_3$.]{
        \label{fig:genhodge_3d_sum}
        \includegraphics[width=0.4\textwidth]
        {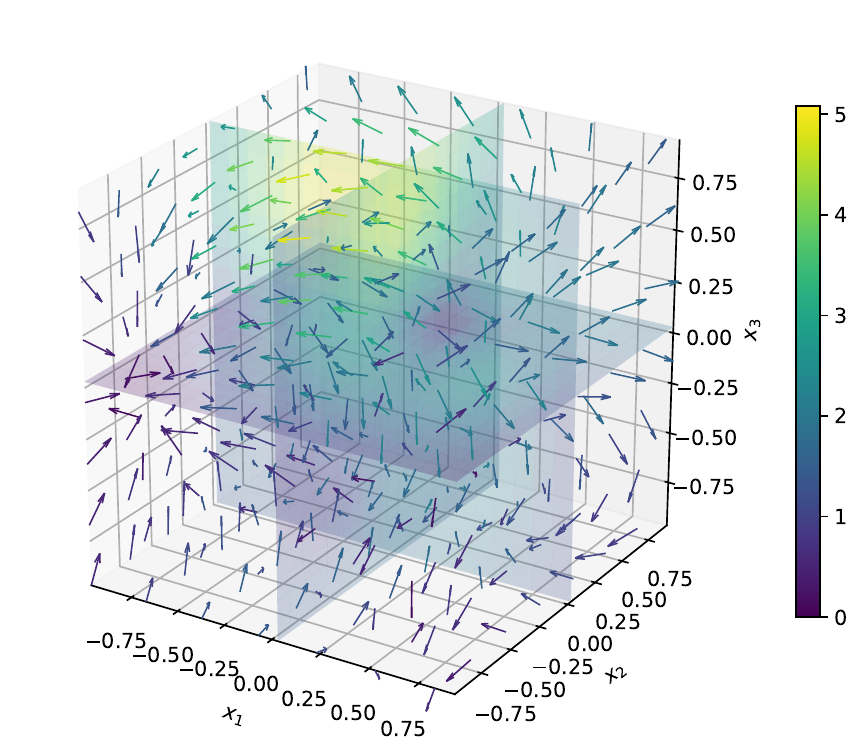}
    }

    \caption{
    Three-dimensional visualization of the generalized Helmholtz--Hodge decomposition for $h=0.1$, with $Q_{\mathbb K}\mathbf{u}_3+Q_{\mathbb J}\mathbf{u}_3 =Q_{\ell,h}\mathbf{u}_3$. The arrows indicate the direction of the corresponding vector field, while the color represents its Euclidean magnitude. A common color scale is used for all four panels, allowing a direct comparison of the field magnitudes.
    }
    \label{fig:genhodge_3d}
\end{figure}

\begin{figure}[H]
    \centering
    \setlength{\tabcolsep}{1pt}
    \renewcommand{\arraystretch}{1.0}

    \begin{tabular}{@{}cccc@{}}

        \includegraphics[width=0.245\textwidth]
        {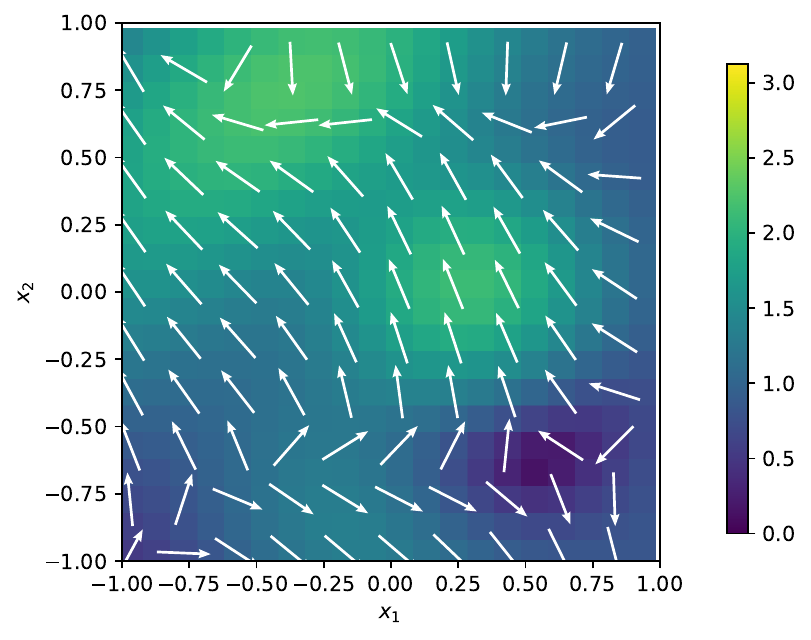}
        &
        \includegraphics[width=0.245\textwidth]
        {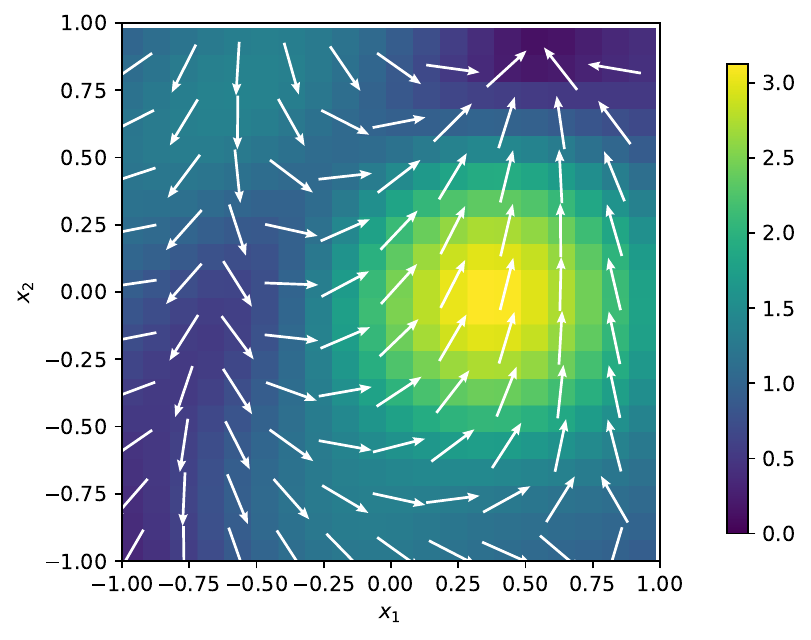}
        &
        \includegraphics[width=0.245\textwidth]
        {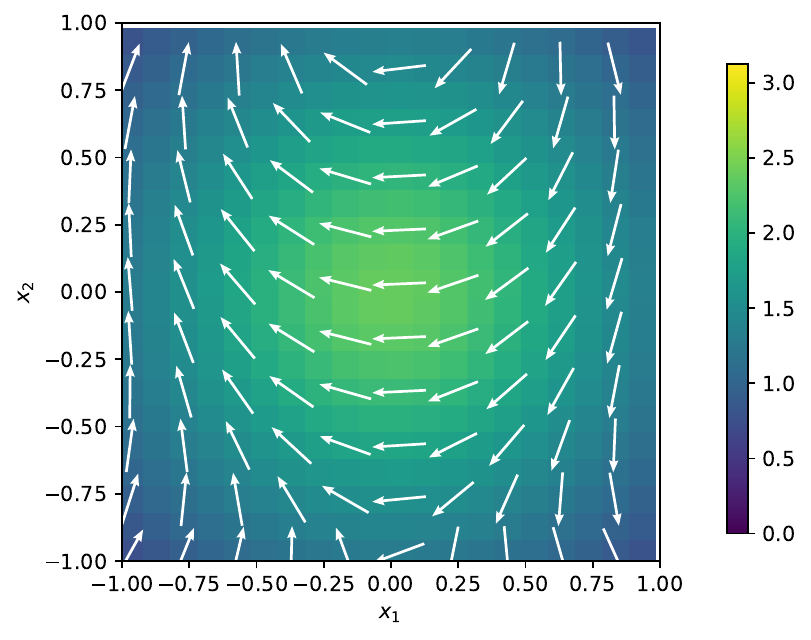}
        &
        \includegraphics[width=0.245\textwidth]
        {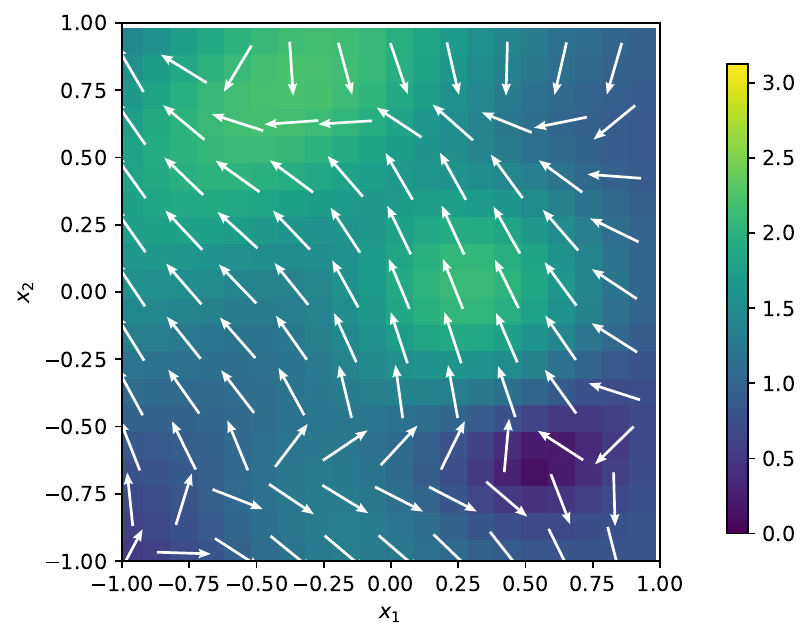}
        \\[-0.15em]

        \multicolumn{4}{c}{
            \small  $x_3=-0.75$
        }
        \\[0.35em]

        \includegraphics[width=0.245\textwidth]
        {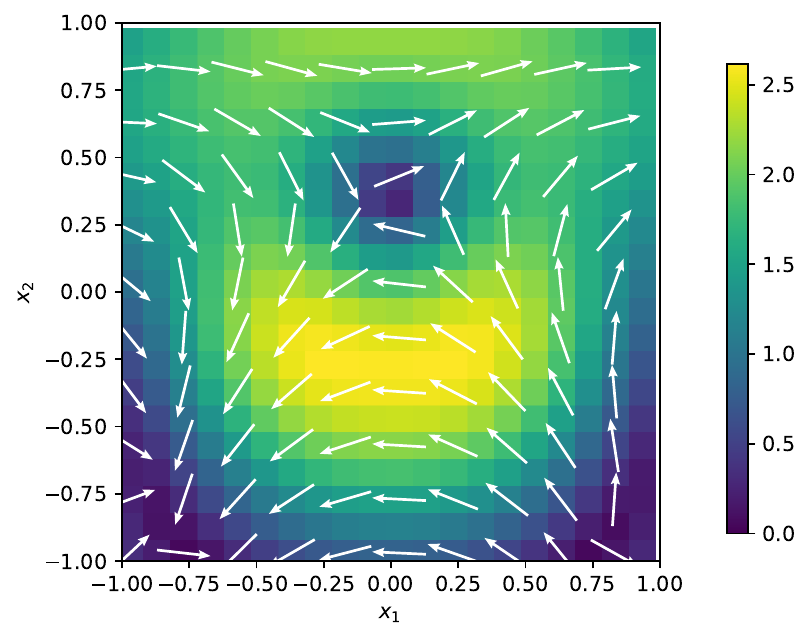}
        &
        \includegraphics[width=0.245\textwidth]
        {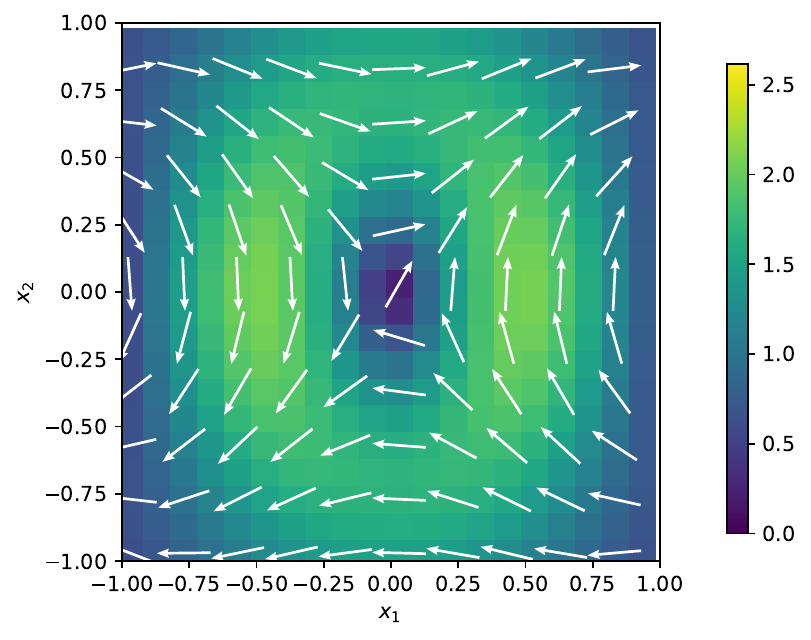}
        &
        \includegraphics[width=0.245\textwidth]
        {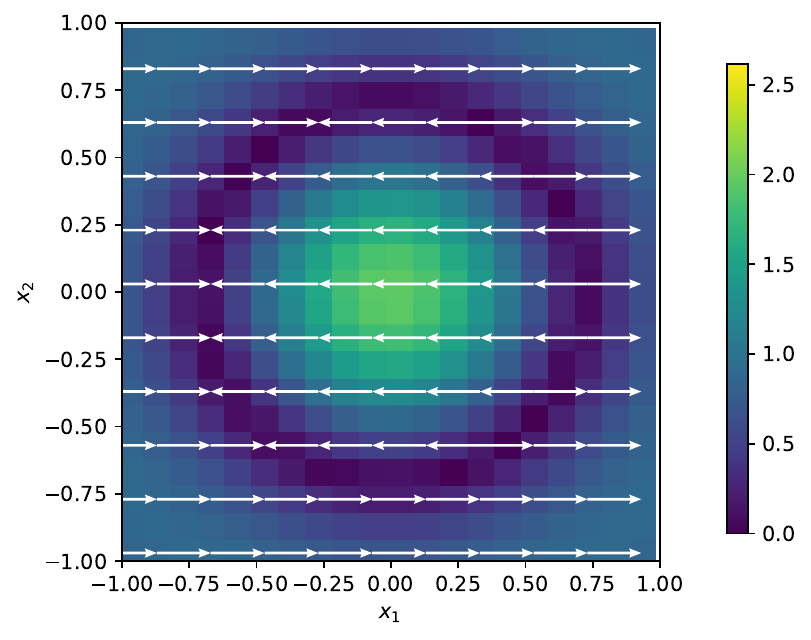}
        &
        \includegraphics[width=0.245\textwidth]
        {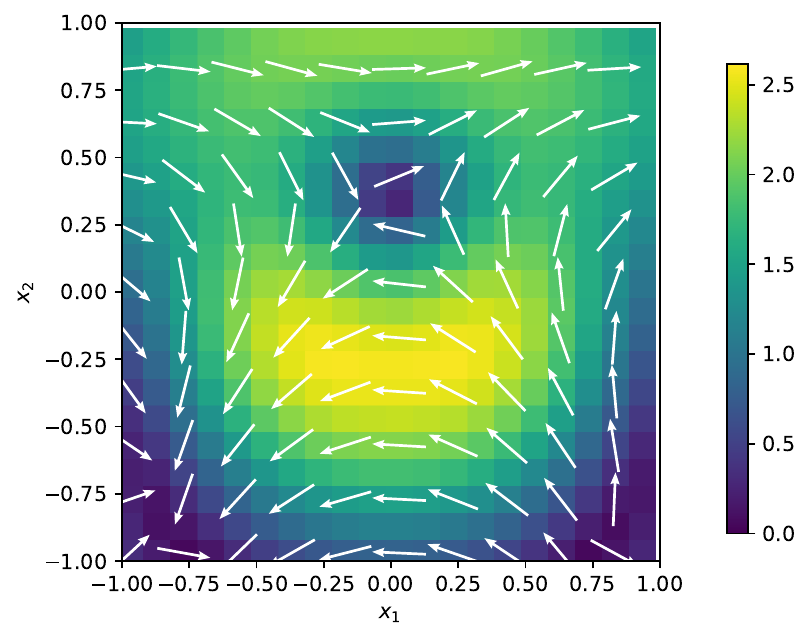}
        \\[-0.15em]

        \multicolumn{4}{c}{
            \small  $x_3=0$
        }
        \\[0.35em]

        \includegraphics[width=0.245\textwidth]
        {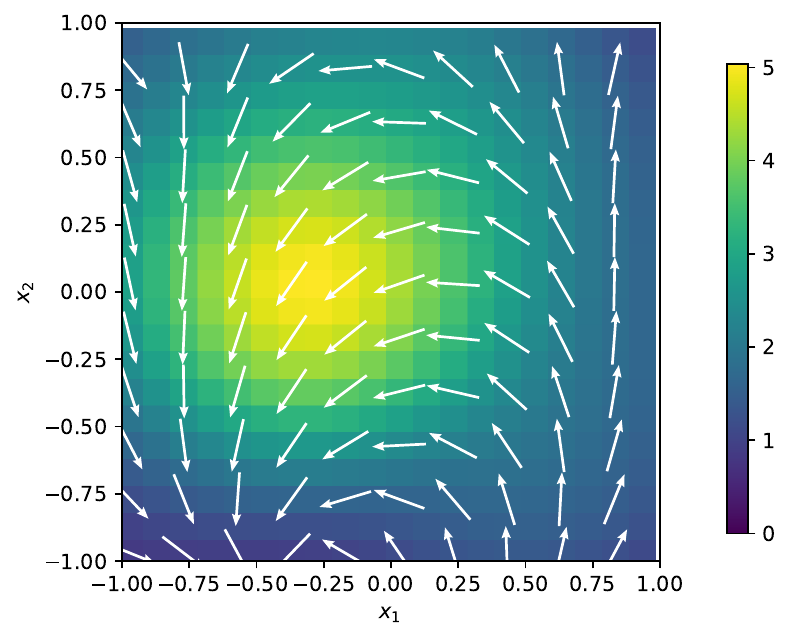}
        &
        \includegraphics[width=0.245\textwidth]
        {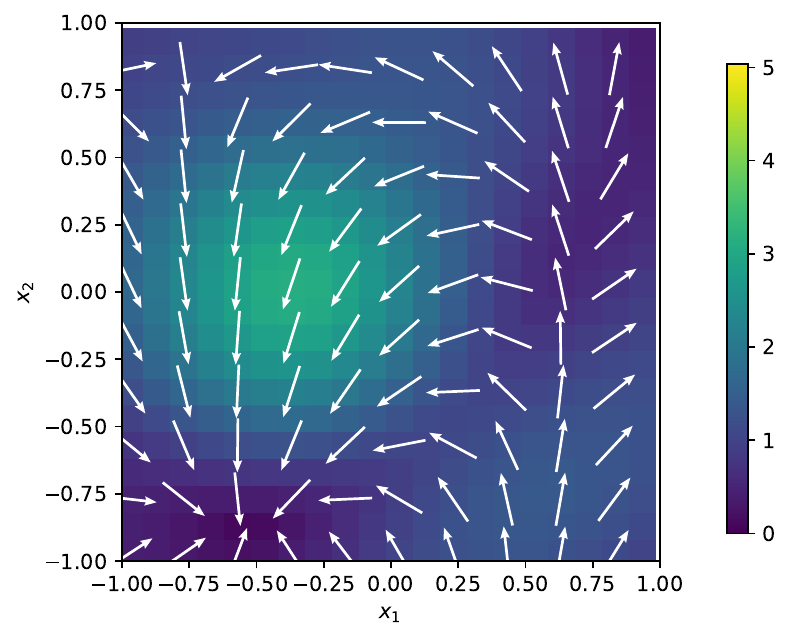}
        &
        \includegraphics[width=0.245\textwidth]
        {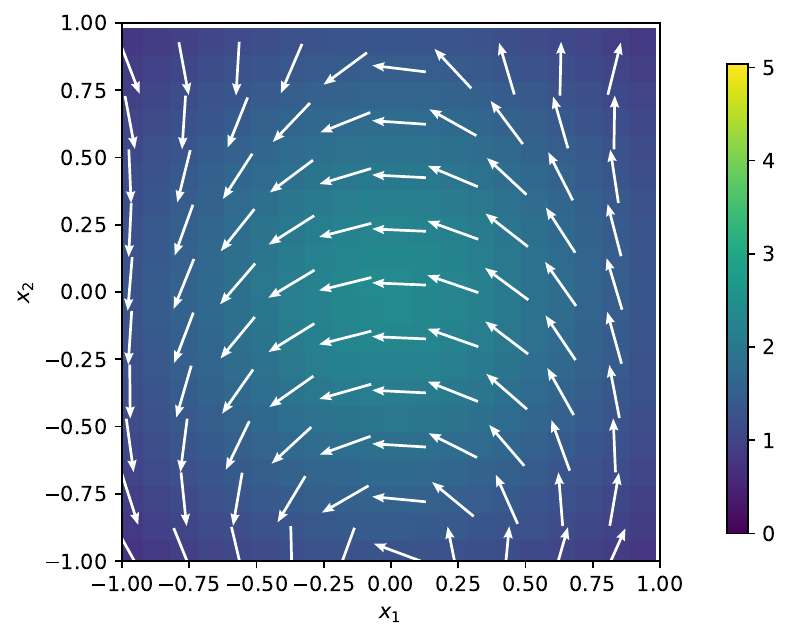}
        &
        \includegraphics[width=0.245\textwidth]
        {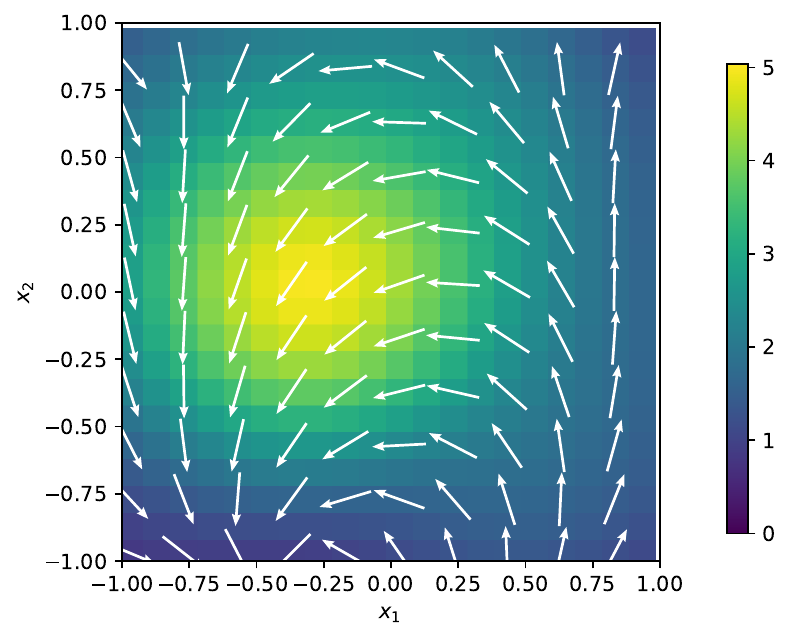}
        \\[-0.15em]

        \multicolumn{4}{c}{
            \small  $x_3=0.75$
        }
        \\[0.55em]

        \small\textbf{(a) $\mathbf{u}_3$.}
        &
        \small \textbf{(b)} $Q_{\mathbb K}\mathbf{u}_3$.
        &
        \small \textbf{(c)} $Q_{\mathbb J}\mathbf{u}_3$.
        &
        \small \textbf{(d)} \textbf{$Q_{\mathbb K}\mathbf{u}_3+Q_{\mathbb J}\mathbf{u}_3$.}

    \end{tabular}

    \caption{
    Cross-sectional visualization of the generalized Helmholtz--Hodge decomposition for $h=0.1$. Rows (a)--(c) correspond to the prescribed planes $x_3=-0.75$, $x_3=0$, and $x_3=0.75$, respectively. From left to right, the columns show the exact field
    $\mathbf{u}_3$, the structure-preserving components $Q_{\mathbb K}\mathbf{u}_3$ and $Q_{\mathbb J}\mathbf{u}_3$, and their reconstructed sum $Q_{\mathbb K}\mathbf{u}_3+Q_{\mathbb J}\mathbf{u}_3$. The background color represents the Euclidean magnitude of the corresponding three-dimensional vector field on each slice, while the arrows show its projection onto the $x_1$--$x_2$ plane.
    }
    \label{fig:genhodge_three_slices}
\end{figure}
 \section{\textit{Conclusions and discussions}}
This paper provides a general framework for the construction of structure-preserving quasi-interpolation schemes for vector-valued functions with intrinsic physical structures  characterized by general dot-product constraints. We propose an approach for constructing matrix-valued kernels that \textbf{analytically} satisfy these constraints column-wise. The resulting quasi-interpolant is a weighted average of discrete function values and is therefore simple to implement. We adopt the scheme to numerical decomposition of  a smooth vector field. Numerical examples in  Subsection $5.2$ validate that both decomposed components can be recovered with theoretically predicted convergence orders.

Numerical simulations in Subsection $5.1$ further reveal a distinctive advantage of exact structure preservation that cannot be explicitly reflected by standard error estimates. For differential constraints that convert a field-based differential functional into an algebraic relation, such as $\nabla\cdot\bold u_m=\bm\lambda\cdot\bold u_m$ for source density calculation,  the exactness structure preservation property of our quasi-interpolant ensure that we can evaluates the targeted functional directly without numerical differentiation. This yields an optimal convergence order of $2k$ instead of $2k-1$ induced by conventional differentiation-based methods. As a typical illustration, we accurately identify the sources and sinks of vector fields governed by the first-order dot-product constraint $(\bm\lambda-\nabla)\cdot\bold u_m=0$. Since the constraint reformulates the source density $\nabla\cdot\bold u_m$ in an algebraic manner $\bm\lambda\cdot\bold u_m$, the reconstruction avoids numerical differentiation and captures field interfaces at the full theoretical convergence order of the scheme.

 Several   research directions remain open for future investigation. A rigorous extension of the current theory to bounded domains will further enhance the practical applicability of the proposed method to realistic numerical simulations. Future work can also explore the performance of the developed schemes for noisy data, where the bias--variance tradeoff of approximation errors dominates the optimal selection of the scale parameter $h$. Finally, the extension to stochastic sampling centers deserves investigation under the framework of probabilistic numerics.
\appendix
\section{\textit{Proof of Identity \eqref{keyrelation}}}\label{app:scaling}
We verify the two identities of \eqref{keyrelation} in turn.

For the first, note that the thin plate spline \eqref{fundamentalsolution} is homogeneous of degree $2\ell-d$ up to a logarithmic term. For odd $d$ this reads $\phi_{\ell}(\bold x)=h^{2\ell-d}\phi_{\ell}(\bold x/h)$. Moreover, substituting $f(\cdot)=g(\cdot/h)$ into the definition of the central difference operator gives
$$(\widetilde{\Delta_h})_j\big[g(\cdot/h)\big](\bold x)=g(\bold x/h-\bold e_j)-2g(\bold x/h)+g(\bold x/h+\bold e_j)=\big[(\widetilde{\Delta_1})_jg\big](\bold x/h),$$
and hence, by induction on the number of difference factors, $q(\widetilde{\Delta_h})\big[g(\cdot/h)\big](\bold x)=\big[q(\widetilde{\Delta_1})g\big](\bold x/h)$ for every polynomial $q$ in the components of the difference operator. Combining the two observations,
$$\psi_{\ell,k,h}(\bold x)=(-1)^{\ell}h^{-2\ell}q_{d,\ell,k}(\widetilde{\Delta_h})\phi_{\ell}(\bold x)=(-1)^{\ell}h^{-2\ell}h^{2\ell-d}\big[q_{d,\ell,k}(\widetilde{\Delta_1})\phi_{\ell}\big](\bold x/h)=h^{-d}\psi_{\ell,k}(\bold x/h),$$
since $-2\ell+(2\ell-d)=-d$.

For even $d$ the same computation produces the additional term $(-1)^{\ell}h^{-d}\ln h\cdot E_{\ell,d}\,q_{d,\ell,k}(\widetilde{\Delta_1})\|\cdot\|^{2\ell-d}(\bold x/h)$, which vanishes. The function $\|\bold y\|^{2\ell-d}=(y_1^2+\cdots+y_d^2)^{\ell-d/2}$ is indeed a polynomial of total degree $2\ell-d$ for even $d$. In addition, a single central difference lowers the degree of a polynomial by two and annihilates polynomials of degree at most one. Moreover, observe that every monomial of $q_{d,\ell,k}$ contains at least $\ell$ difference factors, hence lowers the degree by at least $2\ell>2\ell-d$. Therefore $q_{d,\ell,k}(\widetilde{\Delta_1})\|\cdot\|^{2\ell-d}=0$ and the first identity of \eqref{keyrelation} holds for even $d$ as well.

The second identity follows from the dilation rule of the Fourier transform. Writing $g_h:=h^{-d}g(\cdot/h)$ and substituting $\bold y=\bold x/h$, we have
$$\widehat{g_h}(\bm\omega)=(2\pi)^{-d/2}\int_{\R^d}h^{-d}g(\bold x/h)e^{-i\bm\omega\cdot\bold x}d\bold x=(2\pi)^{-d/2}\int_{\R^d}g(\bold y)e^{-i(h\bm\omega)\cdot\bold y}d\bold y=\widehat{g}(h\bm\omega).$$
Applying this with $g=\psi_{\ell,k}$ gives $\widehat{\psi}_{\ell,k,h}(\bm\omega)=\widehat{\psi_{\ell,k}}(h\bm\omega)$. \qed
\\


\textbf{References}
\begin{thebibliography}{00}



 \bibitem{Amodei} L. Amodei, M. N. Benbourhim, A vector spline quasi-interpolation. In {\sl Wavelets, Images, and Surface Fitting (Chamonix-MontBlanc, 1993)}, A K Peters, Wellesley, MA, 1994, 1--10.
  \bibitem{Atteia} M. Atteia, M. N. Benbourhim, P. G. Casanova, Quasi-interpolant elastic manifolds. In {\sl Reporte de Investigation IIMAS}. UNAM 3 (25) Mexico (1993).
 \bibitem{Atteia1} M. Atteia, Quasi-interpolants and (quasi-) wavelets $P(D)$ manifold, Wavelets, Images, and Surface Fitting, 1994 (29).
 \bibitem{Benbourhim0} M. N. Benbourhim, Div-curl weighted minizing splines, Anal. Appl. 2 (2007) 95--122.
 \bibitem{Benbourhim} M. N. Benbourhim, P. G. Casanova, Generalized variational quasi-interpolants in $(H^m(\Omega))^n$,  Bol. Soc. Mat. Mexicana 3 (1997).
\bibitem{Benbourhim1} M. N. Benbourhim, A. Bouhamidi, Meshless pseudo-polyharmonic divergence-free and curl-free vector fields approximation, SIAM J. Numer. Anal. 42 (2010) 1218--1245.
\bibitem{Bouhamidi} A. Bouhamidi, Weighted thin plate splines, Anal. Appl. 3 (2005) 297--342.
\bibitem{Bouhamidi1} A. Bouhamidi, Pseudo-differential operator associated to the radial basis functions under tension, ESIAM: Proceedings, 20 (2007) 72--82.
\bibitem{Buhmann} M. Buhmann, Convergence of univariate quasi-interpolation using multiquadrics, IMA J. Numer. Anal. 8 (1988) 365--383.
\bibitem{Buhmann1} M. Buhmann, On quasi-interpolation with radial basis functions, J. Approx. Theory  72 (1993) 103--130.
\bibitem{BuhmannandDyn}  M. Buhmann, N. Dyn, D. Levin,   On quasi-interpolation by radial basis functions with scattered centres, Constr. Approx. 11 (1995) 239--254.
\bibitem{BuhmannBook} M. Buhmann, J. J\"ager, Quasi-interpolation (Cambridge Monographs on Applied and Computational Mathematics), Cambridge University Press, 2022.
\bibitem{ChenandSuter} F. Chen, D. Suter, Div-curl vector quasi-interpolation on a finite domain, Math. Comput. Model.  30 (1999) 179--204.
\bibitem{Deriaz} E. Deriaz and V. Perrier, Orthogonal Helmholtz decomposition in arbitrary dimension using divergence-free and curl-free wavelets. Appl. Comput. Harmon. Anal.26 (2009) 249--269.
\bibitem{DoduandRabut0} F. Dodu, C. Rabut, Vectorial interpolation using radial-basis-like functions, Comput. Math. Appl. 43 (2002) 393-411.
\bibitem{DoduandRabut} F. Dodu, C. Rabut, Irrotational or divergence-free interpolation, Numer. Math. 98 (2004) 477--498.
   \bibitem{Duchon} J. Duchon, Splines minimizing rotation-invariant semi-norms in sobolev spaces, In Constructive Theory of
Functions of Several Variables: Proceedings of a Conference Held at Oberwolfach, 85-100, Springer, 1977.
\bibitem{Drake} K. P. Drake, E. J. Fuselier, G. B. Wright, A divergence-free and curl-free radial basis function partition of unity method, SIAM J. Sci. Comput. 43 (2021) A2102--A2126.
\bibitem{EPSTEIN} C. L. Epstein, F. Fryklund, S. D. Jiang, An accurate and efficient scheme for function extension on smooth domains,  SIAM J. Numer. Anal. 63 (2025) 1427--1453.
\bibitem{Fasshauer} G. E. Fasshauer, Q. Ye, Reproducing kernels of generalized Sobolev spaces via a Green function approach with distributional operators, Numer. Math. 119 (2011) 585-611.
\bibitem{Fasshauer1} G. E. Fasshauer, M. J. McCourt, Kernel-based Approximation Methods using Matlab, Interdiscip. Math. Sci. 19, World Scientific, Hackensack, NJ, 2015.
\bibitem{Fisher} N. Fisher, G. E. Fasshauer, W. W. Gao, Quasi-interpolation for the Helmholtz--Hodge decomposition, submitted (arXiv preprint arXiv:2412.04600, 2024).
\bibitem{Wendland7} T. Franz, H. Wendland, Multilevel quasi-interpolation, IMA J. Numer. Anal. 5 (2023) 2934-2964.
\bibitem{Freeden} W. Freeden and T. Gervens, Vector spherical spline interpolation-basic theory and computational aspects, Math. Methods Appl. Sci. 16 (1993) 151--183.
\bibitem{Fuselier} E. Fuselier, Sobolev-type approximation rates for divergence-free and curl-free RBF interpolants, Math. Comput. 77 (2008) 1407--1423.
 \bibitem{Fuselier1} E. Fuselier,  F. Narcowich, J. Ward, G. Wright,  Error and stability estimates for surface-divergence-free RBF interpolants on the sphere, Math. Comput. 78 (2009) 2157--2186.
 \bibitem{Fuselier2}E. Fuselier and G.~Wright, Stability and error estimates for vector field interpolation and decomposition on the sphere with RBF's, SIAM J. Numer. Anal. 47 (2009), 3213--3239.
\bibitem{Gaoetal4} W. W. Gao, G. E. Fasshauer, and N. Fisher, Divergence-free quasi-interpolation, Appl. Comp. Harmon, Anal. 60 (2022) 471--488.
\bibitem{Gaoetal2} W. W. Gao, G. E. Fasshauer, X. P. Sun, X. Zhou, Optimality and regularization properties of quasi-interpolation: deterministic  and stochastic approaches, SIAM J. Numer. Anal. 58 (2020) 2059--2078.
\bibitem{Gaoetal3} W. W. Gao, X. P. Sun, Z. M. Wu, and X. Zhou, Multivariate Monte Carlo approximation based on scattered data, SIAM J. Sci. Comput. 42 (2020) A2262-A2280.
\bibitem{Gaoetal5} W. W. Gao, J. Wang, Z. Sun, G. E. Fasshauer, Quasi-interpolation for high-dimensional function approximation, Numer. Math. 156 (2024) 1855--1885.
\bibitem{Gaoetal1} W. W. Gao, Z. M.  Wu,  Constructing radial kernels with higher-order moment conditions, Adv. Comput. Math.  43 (2017) 1355--1375.
\bibitem{Wendland1} C. Keim, H. Wendland, A high-order, analytically divergence-free approximation method for the time-dependent Stokes problem, SIAM J. Numer. Anal. 54 (2016) 1288--1312.
\bibitem{Kressner} D. Kressner, T. Ni, A. Uschmajew, On the approximation of vector-valued functions by volume sampling, J. Complex. 86 (2025) 101887.
\bibitem{Krieg} D. Krieg, M. Ullrich, Approximation of functions: Optimality sampling and complexity, Acta. Numer. (2006) 273--457.
\bibitem{Leiandjia} J. J. Lei, R. Q. Jia, E. Cheney, Approximation from shift-invariant spaces by integral operators, SIAM J.
Math. Anal., 28 (1997) 481--498.
\bibitem{Li} Y. Li, B. She, On convergence of numerical solutions for the compressible MHD system with weakly divergence-free magnetic field, IMA J.  Numer. Anal. 43(2023) 2169--2197.
 \bibitem{MaandWu} L. M. Ma, Z. M.  Wu, Approximation to the $k$th derivatives by multiquadric quasi-intepolation method, J. Comput. Appl. Math. 2 (2009)
925-932.
\bibitem{MirzaeiandMohammadi} D. Mirzaei, V. Mohammadi, Divergence-free and curl-free moving least squares approximations, J. Comput. Appl. Math. 482 (2026) 117338.
\bibitem{McNally} C. McNally, Divergence-free interpolation of vector fields from point values: exact  $\nabla \cdot  B = 0$  in numerical
simulations. Monthly Notices of the Royal Astronomical Society: Letters. 413 (2011)  L76--L80.
\bibitem{Narcowich} F. Narcowich, J. Ward, Generalized Hermite interpolation via matrix-valued conditionally positive definite functions, Math. Comput. 63 (1994) 661--687.
 \bibitem{Narcowich1} F. Narcowich, J. Ward, G. Wright, Divergence-free RBFs on surfaces,  J. Four. Anal. Appl. 13 (2007) 643--663.
 \bibitem{Ortmann} M. Ortmann, M. Buhmann, High accuracy quasi-interpolation using a new class of generalized multiquadrics, J. Math. Anal. Appl. 538 (2024) 128--359.
 \bibitem{Ortmann1} M. Ortmann, M. Buhmann, On quasi-interpolation and their associated shift-invariant space using a new class of generalized thin plate splines and inverse multiquadrics, J. Fourier Anal. Appl. 31 (2025) 1--35.
 \bibitem{pp} K. Polthier and E. Pruess, Identifying vector field singularities using a discrete Hodge decomposition, Visualization and Mathematics III (H.C. Hedge and K. Polthier eds.) Springer, Berlin.
 \bibitem{CR} C. Rabut, An introduction to Schoenberg's approximation, Comput. Math. Appl. 24 (1991) 139--175.
\bibitem{Rabut0}  C. Rabut, How to build quasi-interpolants: applications to polyharmonic B-splines. In {\sl Curves and Surfaces}, P. J. Laurent, A. Le M\'ehaut\'e, and L. L. Schumaker (eds.). Academic Press, New York, (1991) 391--402.
\bibitem{Rabut}  C. Rabut, Elementary $m$-harmonic cardinal B-splines, Numer. Algor. 2 (1992) 39--62.
  \bibitem{Rabut1}  C. Rabut,  High level $m$-harmonic cardinal B-splines, Numer. Algor. 2 (1992) 63--84.
  \bibitem{Rux} N. Rux, M. Quellmalz, G. Steidl, Slicing of radial fuction: a dimension walk in Fourier space, Samp. Theo. Sign. Proc. Data Anal. 23 (2025) 1-40.
  \bibitem{Ryck}  T. Ryck,  S. Mishra,  Numerical analysis of physics-informed neural networks and related models in physics-informed machine learning. Acta Numer.  33 (2024) 633--713.
   \bibitem{Schabackopti} R. Schaback, Optimal compactly supported functions in Sobolev spaces, Adv. Comput. Math. 52 (2026) https://doi.org/10.1007/s10444-026-10293-9.
  \bibitem{Schabackernel} R. Schaback, Kernel construction techniques,  Eng. Anal. Bound. Elem. 188 (2026) 10678.
 \bibitem{Schabackandwu} R. Schaback, Z. M. Wu, Construction techniques for highly accurate quasi-interpolation operators, J. Approx. Theory 91 (1997) 320--331.
\bibitem{Schoenberg} I. J. Schoenberg, Contributions to the problem of approximation of equidistant data by analytic functions, Quart. Appl. Math. 4 (1946) 45-99 and 112--141.
\bibitem{Wendland3} D. Schr\"ader, H. Wendland, A high-order, analytically divergence-free discretization method for Darcy's problem, Math. Comput. 80 (2011) 263--277.
\bibitem{Schwarz} G. Schwarz,  Hodge decomposition-a method for solving boundary value problems, Lecture Notes in Mathematics, 1607, Springer, Berlin, vii-155.
\bibitem{Wendland6} N. Sharon,  R. S.   Cohen, H. Wendland, On multiscale quasi-interpolation of scattered scalar-and manifold-valued functions, SIAM J. Sci. Comput. 5(2023) A2458-A2482.
\bibitem{Stein} E. Stein, G. Weiss, Introduction to Fourier Analysis on Euclidean Spaces, Princeton Univ. Press, Princeton, 1971.
\bibitem{Strang} G. Strang, G. Fix, A Fourier analysis of the finite-element method, In {\sl Constructive Aspects of Functional Analysis}, G. Geymonat (ed.) (C.I.M.E., Rome, 1973) 793--840.
\bibitem{VenelandBeatson} R. Vennell, R. Beatson, A divergence-free spatial interpolator for large sparse velocity data sets, J. Geophys. Res. 114 (2009) 10--24.
  \bibitem{Wendland5} H. Wendland,  Divergence-free kernel methods for approximating the Stokes problem, SIAM J. Numer. Anal. 47 (2009) 3158--3179.
\bibitem{ZMRS} Z. M. Wu, R. Schaback, Shape preserving properties and convergence of univariate multiquadric quasi-interpolation, Acta. Math. Appl. Sin. 10 (1994) 441--446.
\bibitem{GeneralizedWu} Z. M. Wu, J. P. Liu, Generalized Strang--Fix condition for scattered data quasi-interpolation, Adv. Comput. Math. 23(2005)  201--214.
 \bibitem{PiecewiseWu}  Z. M. Wu, Piecewise functions generated by the solutions of linear ordinary differential equation, Stud. Adv.  Math. 42 (2008) 769--784.
 \end{thebibliography}
\end{document}